\newcommand{\margnote}[1]{
\ifthenelse{\boolean{shownotes}}%
{\marginpar{\raggedright\tiny\texttt{#1}}}%
{}%
}
\newcommand{\hole}[1]{
\ifthenelse{\boolean{shownotes}}%
{\begin{center} \fbox{ \rule {.25cm}{0cm} \rule[-.1cm]{0cm}{.4cm}
\parbox{.85\textwidth}{\begin{center} \texttt{#1}\end{center}} \rule
{.25cm}{0cm}}\end{center}} {} }
\title[Mean-field limits: from particle descriptions to macroscopic equations]{Mean-field limits: from particle descriptions to macroscopic equations}
\author[Carrillo]{Jos\'{e} A. Carrillo}
\address[Jos\'{e} A. Carrillo]{\newline Department of Mathematics
    \newline Mathematical Institute, University of Oxford, Oxford OX2 6GG, UK}
\email{carrillo@maths.ox.ac.uk}
\author[Choi]{Young-Pil Choi}
\address[Young-Pil Choi]{\newline Department of Mathematics \newline
Yonsei University, 50 Yonsei-Ro, Seodaemun-Gu, Seoul 03722, Republic of Korea}
\email{ypchoi@yonsei.ac.kr}
\numberwithin{equation}{section}
\newtheorem{theorem}{Theorem}[section]
\newtheorem{lemma}{Lemma}[section]
\newtheorem{proposition}{Proposition}[section]
\newtheorem{remark}{Remark}[section]
\newtheorem{definition}{Definition}[section]
\newcommand{\R}{\mathbb R}
\newcommand{\mc}{\mathcal C}
\newcommand{\bq}{\begin{equation}}
\newcommand{\eq}{\end{equation}}
\newcommand{\e}{\varepsilon}
\newcommand{\lt}{\left}
\newcommand{\rt}{\right}
\newcommand{\pa}{\partial}
\newcommand{\mz}{\mathcal{Z}}
\newcommand{\me}{\mathcal{E}}
\newcommand{\mf}{\mathcal{F}}
\newcommand{\W}{\mathcal{W}}
\newcommand{\intr}{\int_{\R^d}}
\newcommand{\intrr}{\int_{\R^d \times \R^d}}
\newcommand{\intb}{\int_{B(0,R)}}
\newcommand{\wt}{\widetilde}
\begin{document}
\allowdisplaybreaks


\subjclass[]{}
\keywords{}

\begin{abstract} We rigorously derive pressureless Euler-type equations with nonlocal dissipative terms in velocity and aggregation equations with nonlocal  velocity fields from Newton-type particle descriptions of swarming models with alignment interactions.  We crucially make use of a discrete version of a modulated kinetic energy together with the bounded Lipschitz distance for measures in order to control terms in its time derivative due to the nonlocal interactions.
\end{abstract}

\maketitle \centerline{\date}



%
%
%
%
\section{Introduction}
In this work, we analyse the evolution of an indistinguishable $N$-point particle system given by
\begin{align}\label{main_par}
\begin{aligned}
\dot x_i &= v_i, \quad i=1,\dots,N, \quad t > 0,\cr
\e_N \dot v_i & =-\gamma v_i - \nabla_x V(x_i) - \frac1N \sum_{j=1}^N \nabla_x W(x_i - x_j)+ \frac1N \sum_{j=1}^N \psi(x_i - x_j)(v_j - v_i)
\end{aligned}
\end{align}
subject to the initial data
\bq\label{ini_main_par}
(x_i,v_i)(0) =: (x_i(0), v_i(0)), \quad i =1,\dots,N.
\eq
Here $x_i = x_i(t) \in \R^d$ and $v_i = v_i(t) \in \R^d$ denote the position and velocity of $i$-particle at time $t$, respectively. The coefficient $\gamma \geq 0$ represents the strength of linear damping in velocity, $\e_N>0$ the strength of inertia, $V:\R^d \to \R_+$ and $W : \R^d \to \R$ represent the confinement and interaction potentials, respectively. $\psi : \R^d \to \R_+$ is a communication weight function. Throughout this paper, we assume that $W$ and $\psi$ satisfy $W(x) = W(-x)$ and $\psi(x) = \psi(-x)$ for $x \in \R^d$. They include basic particle models for collective behavior, see \cite{CCP17, CHL17, DCBC,CS07,CFTV, HL09, HT08} and the references therein.

Our main goal is to derive the macroscopic collective models rigorously governing the evolution of the particle system \eqref{main_par} as the number of particles goes to infinity. On one hand, we will derive hydrodynamic Euler-alignment models given by  
\begin{align}\label{main_fluid}
\begin{aligned}
&\pa_t \rho + \nabla_x \cdot (\rho u) = 0,\cr
&\pa_t (\rho u) + \nabla_x \cdot (\rho u \otimes u) = -\gamma \rho u - \rho \nabla_x V - \rho \nabla_x W \star\rho + \rho \int_{\R^d} \psi(x-y) (u(y) - u(x))\,\rho(y)\,dy
\end{aligned}
\end{align}
in the mean-field limit: when initial particles are close to a monokinetic distribution $\rho_0(x) \delta_{u_0(x)}(v)$ in certain sense and $\e_N=O(1)$ as $N\to\infty$. On the other hand, we will show that the particle system can be described by aggregation equations of the form
\bq\label{eq_agg}
\pa_t \bar \rho + \nabla_x \cdot (\bar \rho \bar u) = 0,
\eq
where
\bq\label{eq_agg2}
\gamma \bar \rho \bar u = - \bar  \rho \nabla_x V - \bar  \rho \nabla_x W \star \bar  \rho +\bar  \rho \intr \psi(x-y)(\bar  u(y) - \bar  u(x)) \bar \rho(y)\,dy
\eq
in the combined mean-field/small inertia limit: when initial particles are close to a monokinetic distribution $\rho_0(x) \delta_{u_0(x)}(v)$, $\gamma>0$ and $\e_N \to 0$ as $N \to \infty$. For simplicity of notation when dealing with the mean-field limit, we will take $\e_N=1$ in the sequel.

\subsection{Mean-field limits: from particles to continuum}
As the number of particles $N$ tends to infinity, microscopic descriptions given by the particle system \eqref{main_par} become more and more computationally unbearable. Reducing the complexity of the system is of paramount importance in any practical application. The classical multiscale strategy in kinetic modelling is to introduce the number density function $f = f(x,v,t)$ in phase space $(x,v) \in \R^d \times \R^d$ at time $t \in \R_+$ and study the time evolution of that density function. Then at the formal level, we can derive the following Vlasov-type equation from the particle system \eqref{main_par} as $N \to \infty$:
\bq\label{main_kin}
\pa_t f + v \cdot \nabla_x f - \nabla_v \cdot \lt((\gamma v + \nabla_x V + \nabla_x W \star \rho_f )f\rt) + \nabla_v \cdot (F_a(f)f) =0,
\eq
where $\rho_f = \rho_f(x,t)$ is the local particle density and $F_a(f) = F_a(f)(x,v,t)$ represents a nonlocal velocity alignment force given by
\[
\rho_f(x,t) := \intr f(x,v,t)\,dv
\]
and
\[
F_a(f)(x,v,t) := \intrr \psi(x-y)(w-v)f(y,w,t)\,dydw,
\]
respectively. Let us briefly recall the reader the basic formalism leading to the kinetic equation \eqref{main_kin} as the limiting system of \eqref{main_par}. We first define the empirical measure $\mu^N$ associated to a solution to the particle system \eqref{main_par}, i.e.,
\[
\mu^N_t(x,v) := \frac1N \sum_{i=1}^N \delta_{(x_i(t), v_i(t))}.
\]
As long as there exists a solution to \eqref{main_par}, the empirical measure $\mu^N$ satisfies \eqref{main_kin} in the sense of distributions. To be more specific, for any $\varphi \in \mc^1_0(\R^d \times \R^d)$, we get
\begin{equation}\label{aux}\begin{aligned}
\frac{d}{dt}\intr \varphi(x,v)\,\mu^N_t(dxdv)&= \frac{d}{dt} \frac1N \sum_{i=1}^N \varphi(x_i(t), v_i(t))\cr
&= \frac1N \sum_{i=1}^N \lt(\nabla_x \varphi(x_i(t), v_i(t)) \cdot v_i(t) + \nabla_v \varphi(x_i(t), v_i(t)) \cdot \dot{v_i}(t) \rt).
\end{aligned}\end{equation}
Notice that the particle velocity can also be rewritten in terms of the empirical measure $\mu^N$ as
\[
\dot{v_i}(t)= \gamma v_i + \nabla_x V(x_i) + \intrr \nabla_x W(x_i - y)\,\mu^N_t(dydw) + \intrr \psi(x_i - y)(w-v_i)\,\mu^N_t(dydw).
\]
This implies that the right-hand side of \eqref{aux} can also be written in terms of the empirical measure $\mu^N$ as
$$\begin{aligned}
\frac{d}{dt}\intr \varphi(x,v) &\, \mu^N_t(dxdv)
= \intrr \nabla_x \varphi(x,v)\,\mu^N_t(dxdv)\cr
&  -\intrr \nabla_v \varphi(x,v) \cdot \lt(\gamma v + \nabla_x V(x) +\intrr \nabla_x W(x - y)\,\mu^N_t(dydw) \rt)\mu^N_t(dxdv)\cr
& + \intrr \nabla_v \varphi(x, v) \cdot \lt(\intrr \psi(x - y) (w-v)\,\mu^N_t(dydw)\rt)\mu^N_t(dxdv).
\end{aligned}$$
This concludes that $\mu^N$ is a solution to \eqref{main_kin} in the sense of distributions as long as particle paths are well defined. In fact, if the interaction potential $W$ and the communication weight function $\psi$ are regular enough, for instance, bounded Lipschitz regularity, then the global-in-time existence of measure-valued solutions can be obtained by establishing a weak-weak stability estimate for the empirical measure, see \cite[Section 5]{HL09} for more details. The mean-field limit has attracted lots of attention in the last years in different settings depending on the regularity of the involved potentials $V,W$ and communication function $\psi$. Different approaches to the derivation of the Vlasov-like kinetic equations with alignments/interaction terms or the aggregation equations have been taken leading to a very lively interaction between different communities of researchers in analysis and probability. We refer to \cite{BH,dobru, Neun,Spohn,G03,HT08,CFTV,BCC11,CCHS19,CS18,CS19,H14,JW16,JW17,JW18} for the classical references and non-Lipschitz regularity velocity fields in kinetic cases, to \cite{H09,HIpre} for very related incompressible fluid problems, and to \cite{HJ07,CDFLS11,CCH14,FHM14,HJ15,Deu16,G16,LP17,PS17,CDP20,Spre,BJW20} for results with more emphasis on the singular interaction kernels both at the kinetic and the aggregation-diffusion equation cases.


\subsection{Local balanced laws, the mono-kinetic ansatz, and the large friction limit} We introduce several macroscopic observables; local momentum $\rho_f u_f : \R^d \times \R_+ \to \R^d$, local energy $\rho_f E_f: \R^d \times \R_+ \to \R_+$, strain tensor $P_f: \R^d \times \R_+ \to \R^d \times \R^d$, and heat flux $q_f : \R^d \times \R_+ \to \R^d$ defined as
\[
\rho_f u_f := \intr vf\,dv, \quad \rho_f E_f := \intr |v|^2 f\,dv, \quad P_f := \intr (u-v)\otimes (u-v) f\,dv,
\]
and
\[
q_f := \intr |v-u|^2(v-u)f\,dv.
\]
Here $\cdot \otimes \cdot$ stands for $(a \otimes b)_{ij} = a_i b_j$ for $a = (a_1,\dots,a_d) \in \R^d$ and $b = (b_1,\dots,b_d) \in \R^d$. Then, at the formal level, by taking moments of the kinetic equation \eqref{main_kin}, one can derive the following system of local balanced laws:
\begin{align}\label{eq_cons}
\begin{aligned}
&\pa_t \rho_f + \nabla_x \cdot (\rho_f u_f) =0, \quad (x,t) \in \R^d \times \R_+,\cr
&\pa_t (\rho_f u_f) + \nabla_x \cdot (\rho_f u_f \otimes u_f) + \nabla_x \cdot P_f \cr
&\qquad = - \gamma \rho_f u_f - \rho_f \nabla_x V - \rho_f \nabla_x W \star \rho_f + \rho_f\int_{\R^d} \psi(x-y)(u_f(y) - u_f(x))\rho_f(y)\,dy, \cr
&\pa_t (\rho_f E_f) + \nabla_x \cdot \lt(\rho_f E_f u_f + P_fu_f + q_f \rt)\cr
&\qquad = -\gamma \rho_f E_f - \rho_f u_f \cdot \nabla_x V - \rho_f u_f \cdot \nabla_x W \star\rho_f \cr
&\qquad \quad + \rho_f \intr \psi(x-y) \lt(u_f(x) \cdot u_f(y) - E_f(x) \rt) \rho_f(y)\,dy.
\end{aligned}
\end{align}
The system \eqref{eq_cons} is not closed. Suitable closure assumptions are not known so far even in cases where noise/diffusion is added to the system. However, at the formal level, we can take into account the mono-kinetic ansatz for $f$, as done in \cite{CDP09,CKMT10}, leading to
\bq\label{mono-kin}
f (x,v,t) \simeq \rho_f(x,t) \delta_{u_f(x,v)}(v).
\eq
Then the strain tensor and heat flux become zero and the system \eqref{eq_cons} closes becoming the pressureless Euler equations with nonlocal interaction forces \eqref{main_fluid}:
$$\begin{aligned}
&\pa_t \rho + \nabla_x \cdot (\rho u) =0, \quad (x,t) \in \R^d \times \R_+,\cr
&\pa_t u + u \cdot\nabla_x  u = - \gamma  u - \nabla_x V - \nabla_x W \star \rho + \int_{\R^d} \psi(x-y)(u(y) - u(x))\rho(y)\,dy, \cr
&\pa_t |u|^2 + u\cdot \nabla_x |u|^2= -\gamma  |u|^2 -  u \cdot \nabla_x V - u \cdot \nabla_x W \star\rho +  \intr \psi(x-y) \lt(u_f(x) \cdot u(y) - |u(x)|^2 \rt) \rho(y)\,dy
\end{aligned}$$
on the support of $\rho$. Notice that we have eliminated the subscript $_f$ in the hydrodynamic quantities since the system \eqref{main_fluid} is now closed, the last equation is redundant but it gives a nice information about the total energy of the system. Although the monokinetic assumption is not fully rigorously justified and it does not have a direct physical motivation, it is observed by particle simulations that the derived hydrodynamic system shares some qualitative behavior with the particle system, see \cite{CDMBC,CDP09,CFTV,CKMT10,CKR16,CCP17}. Note that \eqref{main_fluid} conserves only the total mass in time in this generality. However, the total free energy is dissipated due to the linear damping and the velocity alignment force as pointed out in \cite{CFGS17} for weak solutions of this system. The hydrodynamic system \eqref{eq_cons} has a rich variety of phenomena due to the competition between attraction/repulsion and alignment leading to sharp thresholds for the global existence of strong solutions versus finite time blow-up and decay to equilibrium, see \cite{TT14,CCTT16,CCZ16,CCT19,CH19}.

It is worth noticing as in \cite{CDP09} that the mono-kinetic ansatz for $f$ is a measure-valued solution of the kinetic equation \eqref{main_kin}. More precisely, one can show that $\rho(x,t) \delta_{u(x,t)}(v)$ is a solution to the kinetic equation \eqref{main_kin} in the sense of distributions as long as $(\rho,u)(x,t)$ is a strong solution to the hydrodynamic equations \eqref{main_fluid}. Indeed, for any $\varphi \in \mc^1_0(\R^d \times \R^d)$, we obtain
$$\begin{aligned}
&\frac{d}{dt}\intrr \varphi(x,v) \rho(x,t) \,\delta_{u(x,t)}(dv)\,dx\cr
&\quad = \frac{d}{dt}\intr \varphi(x,u(x,t)) \rho(x,t) \,dx = \intr \varphi(x,u(x,t)) \pa_t \rho\,dx + \intr (\nabla_v \varphi) (x, u(x,t)) \cdot (\pa_t u) \rho\,dx =: I_1 + I_2.
\end{aligned}$$
Using the continuity equation in \eqref{main_fluid}, $I_1$ can be easily rewritten as
$$\begin{aligned}
I_1 &= \intr \nabla_x (\varphi(x, u(x,t))) \cdot (\rho u)\,dx \cr
&= \intrr (\nabla_x \varphi)(x,v)\cdot (\rho v) \delta_{u(x,t)}(dv)\,dx+ \intr (\nabla_v \varphi)(x,u(x,t)) \cdot \rho (u \cdot \nabla_x) u\,dx.
\end{aligned}$$
By multiplying the velocity equation in \eqref{main_fluid} by $\rho$ and using $(\nabla_v \varphi) (x, u(x,t))$ as a test function to the resulting equation yields
$$\begin{aligned}
I_2 &= - \intr (\nabla_v \varphi) (x, u(x,t)) \cdot (\pa_t u) \rho\,dx - \intr (\nabla_v \varphi) (x, u(x,t)) \cdot \lt(\gamma u + \nabla_x V + \nabla_x W \star\rho\rt)\rho\,dx\cr
&\quad + \intrr (\nabla_v \varphi) (x, u(x,t)) \cdot (u(y) - u(x)) \psi(x-y)\rho(x)\rho(y)\,dxdy.
\end{aligned}$$
Then similarly as before, we can rewrite the second and third terms on the right hand side of the equality by using the mono-kinetic ansatz \eqref{mono-kin}. This implies
$$\begin{aligned}
I_2 &= - \intr (\nabla_v \varphi) (x, u(x,t)) \cdot (\pa_t u) \rho\,dx - \intr (\nabla_v \varphi) (x, v) \cdot \lt(\gamma v + \nabla_x V + \nabla_x W \star\rho\rt)\rho \delta_{u(x,t)}(dv)\,dx\cr
&\quad + \intrr (\nabla_v \varphi) (x, v) \cdot (w - v) \psi(x-y)\rho(x)\delta_{u(x,y)}(dv)\rho(y)\delta_{u(y,t)}(dw)\,dxdy.
\end{aligned}$$
Combining all of the above estimates yields
$$\begin{aligned}
\frac{d}{dt}\intrr \varphi(x,v) \,&\rho(x,t) \delta_{u(x,t)}(dv)\,dx= \intrr ((\nabla_x \varphi)(x,v)\cdot v) \rho \delta_{u(x,t)}(dv)\,dx\cr
&- \intr (\nabla_v \varphi) (x, v) \cdot \lt(\gamma v + \nabla_x V + \nabla_x W \star\rho\rt)\rho \delta_{u(x,t)}(dv)\,dx\cr
& + \intrr (\nabla_v \varphi) (x, v) \cdot (w - v) \psi(x-y)\rho(x)\delta_{u(x,y)}(dv)\rho(y)\delta_{u(y,t)}(dw)\,dxdy.
\end{aligned}$$
This shows that $\rho(x,t) \delta_{u(x,t)}(v)$ satisfies the kinetic equation \eqref{main_kin} in the sense of distributions. 

Finally, we will be also dealing with the small inertia limit for both the kinetic equation \eqref{main_kin} and the hydrodynamic system \eqref{main_fluid} combined with the mean fieild limit. In the small inertia asymptotic limit, we want to describe the behavior of the scaled kinetic equation
\bq\label{main_kin2}
\e(\pa_t f + v \cdot \nabla_x f) - \nabla_v \cdot \lt((\gamma v + \nabla_x V + \nabla_x W \star \rho_f )f\rt) + \nabla_v \cdot (F_a(f)f) =0,
\eq
and the scaled hydrodynamic system
\begin{align}\label{main_fluid1_2}
\begin{aligned}
&\pa_t \rho + \nabla_x \cdot (\rho u) = 0,\cr
&\e(\pa_t (\rho u) + \nabla_x \cdot (\rho u \otimes u)) = -\gamma \rho u - \rho \nabla_x V - \rho \nabla_x W \star\rho + \rho \int_{\R^d} \psi(x-y) (u(y) - u(x))\,\rho(y)\,dy, 
\end{aligned}
\end{align}
in the limit of small inertia $\e\to 0$. At the formal level, the equations \eqref{main_fluid1_2} will be replaced by \eqref{eq_agg}--\eqref{eq_agg2}
as $\e \to 0$. The limiting nonlinearly coupled aggregation equations \eqref{eq_agg}--\eqref{eq_agg2} have been recently studied in \cite{FS15,FST16}. Several authors have studied particular choices of interactions $V,W$ and comunication functions $\psi$ for some of the connecting asymptotic limits from the kinetic description \eqref{main_kin2} with/without noise to the hydrodynamic system \eqref{main_fluid1_2} in \cite{KMT15,FK19,CCpre,CCJpre}, from the hydrodynamic system \eqref{main_fluid1_2} to the aggregation equation \eqref{eq_agg}--\eqref{eq_agg2} in \cite{LT13,LT17,CPWpre}, and for the direct limit from the kinetic equation to the aggregation equation \eqref{eq_agg}--\eqref{eq_agg2} in \cite{Jab00,CCpre}.


\subsection{Purpose, mathematical tools and main novelties}
Summarizing the main facts of the mean-field limit and the monokinetic ansatz in Subsections 1.1 and 1.2, both the empirical measure $\mu^N(t)$ associated to the particle system \eqref{main_par} and the monokinetic solutions $\rho(x,t) \delta_{u(x,t)}$, with $(\rho,u)(x,t)$ satisfying the hydrodynamic equations \eqref{main_fluid} in the strong sense, are distributional solutions of the same kinetic equation \eqref{main_kin}. In order to analyse the convergence of the empirical measure $\mu^N$ to $\rho(x,t) \delta_{u(x,t)}$, the goal is to establish a weak-strong stability estimate where the strong role is played by the distributional solution $\rho(x,t) \delta_{u(x,t)}$ associated to the strong solution of the hydrodynamic system \eqref{main_fluid}. Our main goal is then to quantify the following convergence
\[
\mu^N_t(x,v) \to \rho(x,t) \delta_{u(x,t)}(v) \quad \mbox{as} \quad N \to \infty
\]
in the sense of distributions for both the mean-field and the combined mean-field/small inertia limit for well prepared initial data. Our main mathematical tools are the use of a modulated kinetic energy combined with the bounded Lipschitz distance in order to control terms between the discrete particle system and the hydrodynamic quantities. Let us first introduce the modulated kinetic energy as
\bq\label{energy_mod}
\frac12 \intrr f|v - u|^2\,dxdv,
\eq
where $f$ is a solution of kinetic equation \eqref{main_kin} and $u$ is the velocity field as part of the solution of the pressureless Euler equations \eqref{main_fluid}. We would like to emphasize that the quantity \eqref{energy_mod} gives a sharper estimate compared to the classical modulated macroscopic energy. Indeed, the macro energy of the system \eqref{main_fluid} is given by
\[
E(U):= \frac{|m|^2}{2 \rho} \quad \mbox{with} \quad  U:= \begin{pmatrix}
\rho \\
m 
\end{pmatrix}, \quad m = \rho u.
\]
Thus its modulated energy, also often refereed to as relative energy, can be defined as
\[
E(U_f|U) := E(U_f) - E(U)  - D E(U)(U_f - U)\quad \mbox{with} \quad U_f := \begin{pmatrix}
        \rho_f \\
         m_f \\
    \end{pmatrix}, \quad m_f = \rho_f u _f.
\]
A straightforward computation gives
\bq\label{energy_mac}
\intr E(U_f|U)\,dx = \frac12 \intr \rho_f |u_f - u|^2\,dx.
\eq
On the other hand, by H\"older inequality, we easily find
\[
\rho_f |u_f|^2 \leq \intr |v|^2 f\,dv.
\]
This yields
$$\begin{aligned}
\intrr f|v-u|^2\,dxdv - \intr \rho_f |u_f-u|^2\,dx
= \intrr |v|^2 f\,dxdv - \intr \rho_f |u_f|^2\,dx \geq 0.
\end{aligned}$$
In fact, we can easily show that
\bq\label{err_energy}
\intrr f|v-u|^2\,dxdv = \intr \rho_f |u_f-u|^2\,dx  + \intrr f|v - u_f|^2\,dxdv.
\eq
This shows that the convergence of the modulated kinetic energy \eqref{energy_mod} implies the convergence of the modulated macro energy \eqref{energy_mac}. We notice that if $f$ is a monokinetic distribution,
$
f(x,v,t) = \rho_f(x,t) \delta_{u_f(x,t)}(v),
$ 
then the second term on the right hand side of \eqref{err_energy} becomes zero, and the two modulated energies \eqref{energy_mod} and \eqref{energy_mac} coincide.  For notational simplicity, we denote by $\mz^N(t) = \{(x_i(t), v_i(t))\}_{i=1}^N$ the set of trajectories associated to the particle system \eqref{main_par}. Then let us define the first important quantity that will allow us to quantify the distance between particles \eqref{main_par} and hydrodynamics \eqref{main_fluid}, it is just the discrete version of the modulated kinetic energy \eqref{energy_mod} defined as
\bq\label{dv_mke}
\me^N (\mz^N(t) | U(t)) :=  \frac12 \intrr |u-v|^2\,\mu^N_t(dxdv) = \frac1{2N} \sum_{i=1}^N |u(x_i(t),t) - v_i(t)|^2.
\eq

The second quantity that will allow us our quantification goal combined with the discrete modulated energy \eqref{dv_mke} is a classical distance between probability measures, the bounded Lipschitz distance, used already by the pioneers in kinetic theory \cite{BH,Neun,Spohn} in the early works for the mean-field limit.  Notice that the pressureless Euler system \eqref{main_fluid} includes the nonlocal position and velocity interaction and alignment forces. Furthermore, its relative energy/entropy has no strict convexity in terms of density variable due to the lack of pressure term. In order to overcome these difficulties, ideas of combining the modulated macro energy and the first or second order Wasserstein distance have been recently proposed in \cite{CCpre, Cpre, CYpre, CCJpre} quantifying the hydrodynamic limit from kinetic equation to the pressureless Euler type system. More recently, in \cite{Cpre2}, a general theory providing some relation between a modulated macro energy-type function and $p$-Wasserstein distance is also developed. In particular, in \cite[Proposition 3.1]{Cpre2}, it is discussed that the $p$-Wasserstein distance with $p\in[1,2]$ can be controlled by the modulated macro energy functional. 

In the present work, we will employ the bounded Lipschitz distance to provide stability estimates between the empirical particle density $\rho^N$ defined as 
\[
\rho^N_t(x) := \int_{\R^d} \mu^N_t\,(dv) = \frac1N \sum_{j=1}^N \delta_{x_j(t)}(x)
\] 
with $\mu^N_t$ be the empirical measure associated to the particle system \eqref{main_par}, and the hydrodynamic particle density $\rho$ solution to \eqref{main_fluid}. More precisely, let $\mathcal{M}(\R^d)$ be the set of nonnegative Radon measures on $\R^d$, which can be considered as nonnegative bounded linear functionals on $\mc_0(\R^d)$. Let $\mu, \nu \in \mathcal{M}(\R^d)$ be two Radon measures. Then the bounded Lipschitz distance, which is denoted by $d_{BL}: \mathcal{M}(\R^d) \times \mathcal{M}(\R^d) \to \R_+$, between $\mu$ and $\nu$ is defined by
\[
d_{BL}(\mu,\nu) := \sup_{\phi \in \Omega} \lt|\intr \phi(x)( \mu(dx) - \nu(dx))\rt|,
\]
where the admissible set $\Omega$ of test functions are given by
\[
\Omega:= \lt\{\phi: \R^d \to \R: \|\phi\|_{L^\infty} \leq 1, \ Lip(\phi) := \sup_{x \neq y} \frac{|\phi(x) - \phi(y)|}{|x-y|} \leq 1 \rt\}.
\]
We also denote by $Lip(\R^d)$ the set of Lipschitz functions on $\R^d$. In Proposition \ref{lem_dbl} below, we provide a relation between the bounded Lispchitz distance and the discrete version of the modulated kinetic energy \eqref{dv_mke}. This key observation allows us to overcome the difficulties mentioned above.


\subsection{Main results and Plan of the paper}

We will first assume that the particle system \eqref{main_par}, the pressureless Euler-type equations \eqref{main_fluid}, and the aggregation equations \eqref{eq_agg}--\eqref{eq_agg2} have existence of smooth enough solutions up to a fixed time $T>0$. We postpone further discussion at the end of this subsection, although we make precise now the assumptions needed on these solutions for our main results. For the limiting hydrodynamic system \eqref{main_fluid}, we need classical solutions in the following sense.

\begin{definition}\label{def_strong} Let $T>0$. We say that $(\rho ,u)$ is a classical solution to the equation \eqref{main_fluid} if the following conditions are satisfied:
\begin{itemize}
\item[(i)] $\rho >0$ on $\R^d \times [0,T)$, $\rho \in \mc([0,T];\mathcal{P}(\R^d))$ and $u \in L^\infty(0,T;\W^{1,\infty}(\R^d))$,
\item[(ii)] $(\rho ,u)$ satisfies \eqref{main_fluid} pointwise. 
\end{itemize}
Here $\mathcal{P}(\R^d)$ denotes the set of probability measures in $\R^d$.
\end{definition}

Our first main result shows the rigorous passage from Newton's equation \eqref{main_par} to pressureless Euler equations \eqref{main_fluid} via the mean-field limit as $N \to \infty$. 

\begin{theorem}\label{thm_main1} Let $T > 0$, $\mz^N(t) = \{(x_i(t), v_i(t))\}_{i=1}^N$ be a solution to the particle system \eqref{main_par}, and let $(\rho, u)$ be the unique classical solution of the pressureless Euler system with nonlocal interaction forces \eqref{main_fluid} in the sense of Definition \ref{def_strong} up to time $T>0$ with initial data $(\rho_0, u_0)$. Then we have
\begin{equation}\label{stab1}\begin{aligned}
\int_{\R^d \times \R^d} |v - u(x,t) |^2 \mu^N_t(dx,dv) &\,+ d^2_{BL}(\rho^N_t(\cdot), \rho(\cdot,t)) \\
&\leq C\left(\int_{\R^d \times \R^d}|v - u_0(x) |^2\mu^N_0(dx,dv) + d^2_{BL}(\rho^N_0, \rho_0)\right) ,
\end{aligned}\end{equation}
where $C>0$ only depends on $\|u\|_{L^\infty \cap Lip}$, $\|\psi\|_{L^\infty \cap Lip}$, $\|\nabla_x W\|_{\W^{1,\infty}}$, and $T$. In particular, if the intial data for \eqref{main_par} and \eqref{main_fluid} are chosen such that
the right hand side of the above inequality goes to zero as $N \to \infty$, then the following consequences hold
$$\begin{aligned}
\intr v \,\mu^N(dv) = \frac1N\sum_{i=1}^N v_i \,\delta_{x_i}  &\rightharpoonup \rho u \quad \mbox{weakly in } L^\infty(0,T^*;\mathcal{M}(\R^d)), \cr
\intr  (v \otimes v)\, \mu^N(dv) = \frac1N\sum_{i=1}^N (v_i \otimes v_i)\,\delta_{x_i}  &\rightharpoonup \rho u \otimes u  \quad \mbox{weakly in } L^\infty(0,T^*;\mathcal{M}(\R^d)), \quad \mbox{and}\cr
\mu^N &\rightharpoonup \rho\delta_{u} \quad \mbox{weakly in } L^\infty(0,T^*;\mathcal{M}(\R^d))
\end{aligned}$$
as $N \to \infty$. 
\end{theorem}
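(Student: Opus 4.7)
My plan is to establish the stability bound \eqref{stab1} by controlling the combined quantity $\mathcal{L}^N(t) := \mathcal{E}^N(\mathcal{Z}^N(t)\,|\,U(t)) + d_{BL}^2(\rho^N_t, \rho(\cdot,t))$ through a Gr\"onwall argument, exploiting that both $\mu^N_t$ and the monokinetic distribution $\rho(x,t)\delta_{u(x,t)}(v)$ are distributional solutions of the kinetic equation \eqref{main_kin}. First I would differentiate $\mathcal{E}^N$: applying the chain rule with the particle ODEs \eqref{main_par} and the pointwise momentum equation in \eqref{main_fluid} evaluated along the trajectories yields, after regrouping,
\[
\frac{d}{dt}\mathcal{E}^N(t) = -2\gamma\,\mathcal{E}^N(t) + J_{\mathrm{conv}} + J_W + J_\psi,
\]
where $J_{\mathrm{conv}} := \frac{1}{N}\sum_i (u(x_i)-v_i)\cdot\bigl((v_i-u(x_i))\cdot\nabla\bigr)u(x_i)$ is bounded by $2\|\nabla u\|_\infty\mathcal{E}^N$, while $J_W$ and $J_\psi$ collect the discrepancies between the particle and continuum interaction and alignment forces.

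For $J_W$, the kernel $y\mapsto \nabla W(x_i-y)$ is bounded and Lipschitz with constants controlled by $\|\nabla_x W\|_{\W^{1,\infty}}$, so $|(\nabla W\star(\rho^N_t-\rho_t))(x_i)| \leq C\,d_{BL}(\rho^N_t,\rho_t)$ and Cauchy--Young give $J_W \leq \mathcal{E}^N + C\,d_{BL}^2(\rho^N_t,\rho_t)$. The delicate term is $J_\psi$: I would add and subtract $\int \psi(x_i-y)(u(y)-u(x_i))\,\rho^N_t(dy) = \frac{1}{N}\sum_j\psi(x_i-x_j)(u(x_j)-u(x_i))$ to split it into a continuum-versus-empirical piece controlled by $d_{BL}$ (because $y\mapsto \psi(x_i-y)(u(y)-u(x_i))$ is bounded Lipschitz with constants from $\|\psi\|_{L^\infty\cap Lip}$ and $\|u\|_{L^\infty\cap Lip}$) plus a fully discrete piece that, after using the symmetry $\psi(x)=\psi(-x)$ and symmetrizing in $i\leftrightarrow j$, collapses to
\[
-\frac{1}{2N^2}\sum_{i,j=1}^N \psi(x_i-x_j)\,\bigl|(v_i-u(x_i))-(v_j-u(x_j))\bigr|^2 \leq 0,
\]
and is absorbed as a genuine dissipation. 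Recognizing this non-positive symmetrized contribution is the main structural obstacle; without it the alignment term would carry an uncontrollable $N$-dependent factor. Together these estimates yield $\frac{d}{dt}\mathcal{E}^N \leq C\,\mathcal{E}^N + C\,d_{BL}^2(\rho^N_t,\rho_t)$.

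To close the Gr\"onwall loop I would control $d_{BL}(\rho^N_t,\rho_t)$ via the Lipschitz characteristic flow $X_t$ of $u$ (defined by $\partial_t X_t = u(X_t,t)$, $X_0=\mathrm{id}$): setting $\bar x_i(t) := X_t(x_i(0))$ and $\bar\rho^N_t := \frac{1}{N}\sum_i\delta_{\bar x_i(t)}$, one has $\rho_t = X_t\#\rho_0$ and $\bar\rho^N_t = X_t\#\rho^N_0$, so a change of variables in the dual definition of $d_{BL}$ produces $d_{BL}(\bar\rho^N_t,\rho_t) \leq e^{\|\nabla u\|_\infty t}\,d_{BL}(\rho^N_0,\rho_0)$. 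For the particle discrepancy the diagonal coupling yields $d_{BL}^2(\rho^N_t,\bar\rho^N_t) \leq \frac{1}{N}\sum_i |x_i(t)-\bar x_i(t)|^2$, and the splitting $v_i - u(\bar x_i) = (v_i-u(x_i)) + (u(x_i)-u(\bar x_i))$ together with Young's inequality gives $\frac{d}{dt}\frac{1}{N}\sum_i |x_i-\bar x_i|^2 \leq C\,\frac{1}{N}\sum_i |x_i-\bar x_i|^2 + 2\mathcal{E}^N$. Setting $Y(t) := \mathcal{E}^N(t) + \frac{1}{N}\sum_i |x_i(t)-\bar x_i(t)|^2$, combining the two differential inequalities with the triangle bound $d_{BL}^2(\rho^N_t,\rho_t) \leq 2\,d_{BL}^2(\rho^N_t,\bar\rho^N_t) + 2\,d_{BL}^2(\bar\rho^N_t,\rho_t)$, and applying Gr\"onwall with $Y(0)=\mathcal{E}^N(0)$ yields \eqref{stab1}. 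The weak convergences as $N\to\infty$ then follow by testing $\mu^N_t$, $\int v\,\mu^N_t(dv)$ and $\int v\otimes v\,\mu^N_t(dv)$ against bounded Lipschitz functions, splitting $v_i = (v_i - u(x_i)) + u(x_i)$ (and expanding the bilinear case analogously), and bounding the errors by Cauchy--Schwarz against $\mathcal{E}^N$ and by $d_{BL}(\rho^N_t,\rho_t)$.
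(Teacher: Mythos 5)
Your proposal is correct, and its architecture is the same as the paper's: a differential inequality for the discrete modulated kinetic energy $\me^N$ in which the interaction and alignment discrepancies are controlled by $d_{BL}(\rho^N_t,\rho_t)$ and $\me^N$ itself, followed by a transport estimate comparing the particle positions with the characteristics of $u$ issued from $x_i(0)$, and a final Gr\"onwall argument; the convergence statements are obtained exactly as in the paper by testing against bounded Lipschitz functions and splitting $v_i=(v_i-u(x_i))+u(x_i)$. Two points of detail differ. First, for the purely discrete part of the alignment term you symmetrize in $i\leftrightarrow j$ to produce the nonpositive quantity $-\frac{1}{2N^2}\sum_{i,j}\psi(x_i-x_j)|(v_i-u(x_i))-(v_j-u(x_j))|^2$, which you drop; the paper instead bounds the cross term by $2\|\psi\|_{L^\infty}\me^N$ and keeps the nonnegative piece $\frac1N\sum_i\big(\int\psi(x_i-y)\rho^N(dy)\big)|v_i-u(x_i)|^2$ on the left-hand side. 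Both are valid, but your remark that without the symmetrization the alignment term would carry an uncontrollable $N$-dependent factor is not accurate: the naive bound already gives a constant multiple of $\me^N$, which Gr\"onwall absorbs, so the dissipative structure is a refinement rather than a necessity here (it does matter in the small-inertia regime of Theorem \ref{thm_main2}, where one wants good sign information). Second, to close the loop you couple $\me^N$ with the squared displacement $\frac1N\sum_i|x_i(t)-\bar x_i(t)|^2$ along the flow and run a joint Gr\"onwall estimate, whereas the paper (Proposition \ref{lem_dbl}) bounds $d_{BL}(\rho^N_t,\rho_t)$ directly by $Cd_{BL}(\rho^N_0,\rho_0)+C\big(\int_0^t\me^N\,ds\big)^{1/2}$ using $|x_i(t)-\eta(x_i(0),t)|\le C\int_0^t|v_i-u(x_i)|\,ds$ and Cauchy--Schwarz in time; the two bookkeeping schemes are equivalent and yield the same constant dependence on $\|u\|_{L^\infty\cap Lip}$, $\|\psi\|_{L^\infty\cap Lip}$, $\|\nabla_x W\|_{\W^{1,\infty}}$, and $T$. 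One implicit step you should make explicit is the passage from the conservative momentum equation in \eqref{main_fluid} to the velocity form $\pa_t u+u\cdot\nabla_x u=\dots$ along trajectories, which uses $\rho>0$ from Definition \ref{def_strong}.
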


The main novelty of this first result resides in how to control the alignment terms via the modulated energy combined with the bounded Lipschitz distance. 

\begin{remark}[Singular repulsive interaction]
The previous result also applies to singular repulsive interaction potentials. In particular, it holds for the Coulomb interaction potential on $\R^d$ given by
\[
\mathcal{N}(x) = \left\{ \begin{array}{ll}
\displaystyle -\frac{|x|}{2} & \textrm{for $d=1$,}\\[3mm]
\displaystyle -\frac{1}{2\pi} \log |x| & \textrm{for $d=2$,}\\[3mm]
\displaystyle \frac{1}{d(d-2)\alpha_d}\frac{1}{|x|^{d-2}} & \textrm{for $d \geq 3$},
  \end{array} \right. 
\]
and for Riez potentials in a sense to be specified in Subsection 2.3. In fact, the expected stability estimate \eqref{stab1} can be formally substituted by 
$$\begin{aligned}
\int_{\R^d \times \R^d} |v &\,- u(x,t) |^2\mu^N_t(dx,dv) + \intr |\nabla_x \mathcal{N} \star (\rho^N_t - \rho)|^2\,dx + d^2_{BL}(\rho^N_t(\cdot), \rho(\cdot,t)) \cr
&\quad \leq C \left( \int_{\R^d \times \R^d} |v - u_0(x) |^2\mu^N_0(dx,dv) + d^2_{BL}(\rho^N_0, \rho_0) + \intr |\nabla \mathcal{N} \star (\rho^N_0 - \rho_0)|^2\,dx\right),
\end{aligned}$$
where $C>0$ only depends on $\|u\|_{L^\infty \cap Lip}$, $\|\psi\|_{L^\infty \cap Lip}$, and $T$ and corresponding solutions to the particle and the hydrodynamics system for $W=\mathcal{N}$. However, the formal integration by parts leading to such expected stability term for the interaction potential does not make sense due to the singularity of the Newtonian potential. This has been recently solved in the recent breakthrough result in \cite{Spre} by introducing a different relative potential energy avoiding the diagonal terms.
\end{remark}

Section 2 is devoted to the proof of Theorem \ref{thm_main1} and the generalization to singular repulsive potentials using \cite{Spre} in its last subsection.

\

Our second main result is devoted to the asymptotic analysis for the particle system \eqref{main_par} under the small inertia regime: $\e_N \to 0$ as $N \to \infty$. By Theorem \ref{thm_main1}, we expect that for sufficiently large $N\gg 1$, the system \eqref{main_par} in the mean-field/small inertia limit can be well approximated by 
$$\begin{aligned}
&\pa_t \bar\rho + \nabla_x \cdot (\bar\rho \bar u) = 0,\cr
&\e_N\pa_t (\bar\rho  \bar u) + \e_N\nabla_x \cdot (\bar \rho \bar u \otimes \bar u) = -\gamma \bar \rho \bar u - \bar \rho \nabla_x V - \bar \rho \nabla_x W \star\bar\rho + \bar\rho \int_{\R^d} \psi(x-y) (\bar u(y) - \bar u(x))\,\bar\rho(y)\,dy.
\end{aligned}$$
At the formal level, since $\e_N \to 0$ as $N \to \infty$, it follows from the momentum equations in the above system that the hydrodynamic system \eqref{main_fluid} should be replaced by \eqref{eq_agg}--\eqref{eq_agg2} as $N \to \infty$. In order to apply our strategy above, we rewrite the equations \eqref{eq_agg}--\eqref{eq_agg2} as 
\begin{align}\label{main_fluid22}
\begin{aligned}
&\pa_t \bar\rho + \nabla_x \cdot (\bar\rho \bar u) = 0,\cr
&\e_N\pa_t (\bar\rho  \bar u) + \e_N\nabla_x \cdot (\bar \rho \bar u \otimes \bar u) = -\gamma \bar \rho \bar u - \bar \rho \nabla_x V - \bar \rho \nabla_x W \star\bar\rho \cr
&\hspace{4.5cm} + \bar\rho \int_{\R^d} \psi(x-y) (\bar u(y) - \bar u(x))\,\bar\rho(y)\,dy + \e_N \bar\rho \bar e,
\end{aligned}
\end{align}
where $\bar e := \pa_t \bar u + \bar u \cdot \nabla_x \bar u$. We now introduce the needed notion of strong solution to the equation \eqref{eq_agg}--\eqref{eq_agg2} for our purposes.

\begin{definition}\label{def_strong2} Let $T \in (0,\infty)$, we say that $(\bar\rho ,\bar u)$ is a strong solution to the equation \eqref{eq_agg}--\eqref{eq_agg2} if the following conditions are satisfied.
\begin{itemize}
\item[(i)] $\bar\rho \in \mc([0,T];\mathcal{P}(\R^d))$ and $\bar \rho > 0$ on $\R^d \times [0,T)$, 
\item[(ii)] $\bar u \in L^\infty(0,T; \W^{1,\infty}(\R^d))$ and $\pa_t \bar u \in L^\infty(\R^d \times (0,T))$,
\item[(iii)]  $(\bar\rho ,\bar u)$ satisfies \eqref{eq_agg}--\eqref{eq_agg2} pointwise. 
\end{itemize}
\end{definition}

\begin{remark} If $V\equiv 0$ and $\gamma >0$ is sufficiently large, then we can check that $\|\bar u\|_{L^\infty(0,T;\W^{1,\infty})}$ and $\|\pa_t \bar u\|_{L^\infty}$ can be bounded from above by some constant, which depends only on $\|\nabla W\|_{\W^{1,\infty}}$,  $\|\psi\|_{\W^{1,\infty}}$, $\|\bar\rho\|_{L^\infty(0,T;L^1)}$, and $\gamma$. We refer to \cite[Remark 2.5]{Cpre2} for details. For general confinement potentials, we can also deal with general strong solutions for compactly supported initial data since their support remains compact for all times. We refer to \cite{BCLR,CCZ16} for particular instances of these results.
\end{remark}

We can now state our second main result related to a weak-strong stability estimate in the combined mean-field/small inertia limit.

\begin{theorem}\label{thm_main2} Let $T>0$ and $d \geq 1$. Let $\mz^N(t) = \{(x_i(t), v_i(t))\}_{i=1}^N$ be a solution to the particle system \eqref{main_par}, and let $(\bar\rho, \bar u)$ be the unique strong solution of the aggregation-type equation \eqref{eq_agg}--\eqref{eq_agg2} in the sense of Definition \ref{def_strong2} up to time $T>0$ with the initial data $\bar \rho_0$. Suppose that the strength of damping $\gamma >0$ is large enough. Then we have
$$\begin{aligned}
&d_{BL}^2(\rho^N_t(\cdot),\bar \rho(\cdot,t)) +\int_0^t \int_{\R^d \times \R^d} |v - \bar u(x,s) |^2\mu^N_s(dxdv) \,ds \cr
&\quad \leq C\e_N\int_{\R^d \times \R^d} |v - \bar u_0(x) |^2\mu^N_0(dxdv) +Cd_{BL}^2(\rho^N_0,\bar \rho_0)   + C\e_N^2
\end{aligned}$$
and
$$\begin{aligned}
&  \frac1{\e_N}d^2_{BL}(\rho^N_t(\cdot),\bar \rho(\cdot,t))  + \int_{\R^d \times \R^d} |v - \bar u(x,t) |^2\mu^N_t(dxdv)  \cr
&\quad \leq C(1 + \e_N)\int_{\R^d \times \R^d} |v - \bar u_0(x) |^2\mu^N_0(dxdv)   + \frac{C}{\e_N}d_{BL}^2(\rho^N_0,\bar \rho_0)   + C\e_N
\end{aligned}$$
for all $t \in [0,T]$, where $C>0$ is independent of $\e_N$ and $N$ but depending on $\|\bar u\|_{L^\infty(0,T;\W^{1,\infty})}$, $\|\pa_t \bar u\|_{L^\infty}$, $\|\nabla W\|_{\W^{1,\infty}}$,  $\|\psi\|_{\W^{1,\infty}}$, and $\gamma$. In particular if the initial data satisfies
\bq\label{as_thm2}
\int_{\R^d \times \R^d} |v - \bar u_0(x) |^2\mu^N_0(dxdv) + d_{BL}(\rho^N_0,\bar \rho_0) \leq C_0 \,\e_N
\eq
for some $C_0>0$ which is independent of $\e_N$, then we have
\[
d_{BL}^2(\rho^N_t(\cdot),\bar \rho(\cdot,t))  + \int_0^t \int_{\R^d \times \R^d} |v - \bar u(x,s) |^2\mu^N_s(dxdv) \,ds \leq C\e_N^2
\]
and
\[
\int_{\R^d \times \R^d} |v - \bar u(x,t) |^2\mu^N_t(dxdv) \leq C\e_N
\]
for all $t \in [0,T]$, where $C>0$ is as above.
\end{theorem}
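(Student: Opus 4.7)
The plan is to adapt the modulated-energy plus bounded-Lipschitz strategy of Theorem \ref{thm_main1}, now calibrated to the small-inertia scaling. Following the reformulation \eqref{main_fluid22}, I view \eqref{eq_agg}--\eqref{eq_agg2} as a hydrodynamic system with residual forcing $\e_N \bar\rho \bar e$, where $\bar e := \pa_t \bar u + \bar u \cdot \nabla_x \bar u \in L^\infty$ by Definition \ref{def_strong2}. The central object is the discrete modulated kinetic energy
\[
\me^N(\mz^N(t)|\bar U(t)) := \frac{1}{2N}\sum_{i=1}^N |\bar u(x_i(t),t) - v_i(t)|^2,
\]
which I will control jointly with $d_{BL}(\rho^N_t,\bar\rho(\cdot,t))$.

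I first differentiate $\e_N\me^N$ in time. The chain rule together with the identity $\pa_t \bar u(x_i) + v_i\cdot \nabla_x \bar u(x_i) = \bar e(x_i) + (v_i - \bar u(x_i))\cdot \nabla_x \bar u(x_i)$ and the scaled particle equation \eqref{main_par} generates a term $-\e_N \dot v_i$, which I expand via \eqref{main_par}. Subtracting the pointwise aggregation identity supplied by \eqref{eq_agg2}, namely $\gamma \bar u(x_i) = -\nabla_x V(x_i) - \nabla_x W\star\bar\rho(x_i) + \intr\psi(x_i-y)(\bar u(y)-\bar u(x_i))\bar\rho(y)\,dy$, to substitute out $\gamma \bar u(x_i)$ yields after rearrangement an identity of the form
\[
\e_N \frac{d}{dt}\me^N + 2\gamma \me^N = \e_N \mathcal{R}_{\bar e} + \e_N \mathcal{R}_{\mathrm{conv}} + \mathcal{R}_W + \mathcal{R}_\psi,
\]
where $\mathcal{R}_{\bar e}$ and $\mathcal{R}_{\mathrm{conv}}$ are the residuals coming respectively from $\bar e(x_i)$ and from $(v_i - \bar u(x_i)) \cdot \nabla_x \bar u(x_i)$, and $\mathcal{R}_W$, $\mathcal{R}_\psi$ are the empirical-vs-continuum mismatches generated by $\nabla_x W$ and by $\psi$.

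The first two residuals are immediate: Cauchy--Schwarz and Young give $|\e_N\mathcal{R}_{\bar e}|\le \e_N^2\|\bar e\|_{L^\infty}^2 + \me^N$ and $|\e_N\mathcal{R}_{\mathrm{conv}}|\le 2\e_N\|\nabla_x \bar u\|_{L^\infty}\me^N$. For $\mathcal{R}_W$ the inner factor $\frac{1}{N}\sum_j \nabla_x W(x_i-x_j) - \nabla_x W\star\bar\rho(x_i)$ is the integral of a bounded Lipschitz test function against $\rho^N_t-\bar\rho$ (using $\nabla_x W\in \W^{1,\infty}$), hence controlled by $C\,d_{BL}(\rho^N_t,\bar\rho(\cdot,t))$; Cauchy--Schwarz against $\bar u(x_i)-v_i$ then gives $|\mathcal{R}_W|\le Cd_{BL}\sqrt{\me^N}\le \tfrac12\me^N + Cd_{BL}^2$. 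The delicate term $\mathcal{R}_\psi$ I would split by inserting $\bar u(x_j)-\bar u(x_i)$ into the particle sum: the piece $A_i$ where $v_j-v_i$ is replaced by $\bar u(x_j)-\bar u(x_i)$ is an empirical-vs-continuum difference tested against the uniformly bounded Lipschitz kernel $y\mapsto\psi(x_i-y)(\bar u(y)-\bar u(x_i))$, again bounded by $Cd_{BL}\sqrt{\me^N}$; the remainder $B_i = \frac{1}{N}\sum_j \psi(x_i-x_j)[(v_j-\bar u(x_j))-(v_i-\bar u(x_i))]$ has, thanks to the symmetry $\psi(x)=\psi(-x)$, a favorable sign after pair-symmetrization but is in any case dominated by $C\|\psi\|_{L^\infty}\me^N$ via Cauchy--Schwarz. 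Independently, testing $\pa_t(\rho^N_t-\bar\rho) = -\nabla_x\cdot(\rho^N_t v) + \nabla_x\cdot(\bar\rho\bar u)$ against an arbitrary $\phi$ with $\|\phi\|_{L^\infty},Lip(\phi)\le 1$ and splitting $\nabla\phi\cdot v = \nabla\phi\cdot(v-\bar u) + \nabla\phi\cdot \bar u$ yields, after taking the supremum and squaring,
\[
\frac{d}{dt}d_{BL}^2(\rho^N_t,\bar\rho(\cdot,t))\le C\bigl(\me^N + d_{BL}^2(\rho^N_t,\bar\rho(\cdot,t))\bigr).
\]

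Combining the two inequalities and choosing $\gamma$ large enough so that $2\gamma$ dominates the cumulative $\me^N$-coefficients produced by $\mathcal{R}_{\mathrm{conv}},\mathcal{R}_W,\mathcal{R}_\psi$ and the $d_{BL}$-evolution, one obtains a closed Gronwall inequality
\[
\frac{d}{dt}\bigl(\e_N\me^N + d_{BL}^2\bigr) + \kappa\,\me^N \le C\bigl(d_{BL}^2 + \e_N^2\bigr)
\]
for some fixed $\kappa>0$ independent of $\e_N$, whose integration and Gronwall application yields the first stated estimate. The second pointwise-in-time estimate follows by dividing the $\me^N$-inequality $\e_N\frac{d}{dt}\me^N + 2\gamma\me^N \le Cd_{BL}^2 + C\e_N^2$ by $\e_N$, applying the integrating factor $e^{2\gamma t/\e_N}$, and inserting the sup-norm bound on $d_{BL}^2$ already provided by the first estimate, which converts the $\frac{C}{\e_N}\int_0^t d_{BL}^2(s)\,ds$ contribution into the claimed $C(1+\e_N)\me^N(0) + \frac{C}{\e_N}d_{BL}^2(0) + C\e_N$. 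The main obstacle is the alignment remainder $\mathcal{R}_\psi$: the split into $A_i$ and $B_i$ forces simultaneous use of position-side information ($d_{BL}$) and velocity-side information ($\me^N$), and the large-friction hypothesis $\gamma\gg 1$ is essential precisely so that the dissipation $2\gamma\me^N$ absorbs uniformly in $\e_N$ the $\me^N$-contributions of $B_i$, $\mathcal{R}_W$ and $\e_N\mathcal{R}_{\mathrm{conv}}$.
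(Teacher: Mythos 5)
Your overall strategy is the paper's: rewrite \eqref{eq_agg}--\eqref{eq_agg2} as \eqref{main_fluid22} with the residual $\e_N\bar\rho\,\bar e$, differentiate the discrete modulated kinetic energy along particle trajectories, estimate the $\bar e$-term by Young's inequality, the $W$- and $\psi$-mismatches by $C d_{BL}\sqrt{\me^N}$ and $C\me^N$ exactly as in Proposition \ref{lem_mke}, use $\gamma$ large to absorb the $\me^N$-contributions uniformly in $\e_N$, and close the system with the bounded Lipschitz distance; your Gr\"onwall bookkeeping for the two stated estimates (integrated-in-time and pointwise-in-time, the latter after dividing by $\e_N$ and feeding back the first bound) matches the paper's as well.

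The one genuine gap is your treatment of the position variable. You claim $\frac{d}{dt}d^2_{BL}(\rho^N_t,\bar\rho(\cdot,t))\le C\bigl(\me^N+d^2_{BL}\bigr)$ by testing the difference of the two continuity equations against an admissible $\phi$ and splitting $\nabla\phi\cdot v=\nabla\phi\cdot(v-\bar u)+\nabla\phi\cdot\bar u$. The second piece requires bounding $\bigl|\intr \nabla\phi\cdot\bar u\,\bigl(\rho^N_t(dx)-\bar\rho(x,t)\,dx\bigr)\bigr|$ by $C\,d_{BL}(\rho^N_t,\bar\rho(\cdot,t))$, but an admissible $\phi$ only satisfies $\|\phi\|_{L^\infty}\le 1$ and $Lip(\phi)\le 1$, so $\nabla\phi$ is merely bounded (and possibly discontinuous), with no control on second derivatives; hence $\nabla\phi\cdot\bar u$ is not an admissible test function for $d_{BL}$ and this step fails. (There is also the secondary issue that $t\mapsto d_{BL}(\rho^N_t,\bar\rho(\cdot,t))$ need not be differentiable.) This is precisely why the paper does not run a Gr\"onwall argument directly on $d_{BL}$: Proposition \ref{lem_dbl} instead proves the integrated bound $d^2_{BL}(\rho^N_t,\bar\rho(\cdot,t))\le C\,d^2_{BL}(\rho^N_0,\bar\rho_0)+C\int_0^t\me^N(\mz^N(s)|\bar U(s))\,ds$ by comparing the particle paths with the characteristics of the (Lipschitz) field $\bar u$, pushing $\bar\rho_0$ forward along that flow and using Cauchy--Schwarz in time. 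Since $\bar u\in L^\infty(0,T;\W^{1,\infty}(\R^d))$ by Definition \ref{def_strong2}, that proposition applies verbatim in the present setting; replacing your differential $d_{BL}$ inequality by it (i.e.\ plug the integrated bound into your integrated modulated-energy inequality and then apply Gr\"onwall, using $\gamma$ large to absorb the resulting $\frac{C}{\e_N}\int_0^t\me^N$ term) repairs the argument, and the remainder of your proposal then reproduces the paper's proof, including both final estimates under the well-preparedness assumption \eqref{as_thm2}.
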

\begin{remark}By using the same argument as in Theorem \ref{thm_main1}, under the assumption \eqref{as_thm2} the following consequences hold:
\[
\intr v \,\mu^N(dv) = \frac1N\sum_{i=1}^N v_i \,\delta_{x_i}  \rightharpoonup \bar \rho \bar u \quad \mbox{weakly in } L^\infty(0,T^*;\mathcal{M}(\R^d))
\]
and
\[
\mu^N \rightharpoonup \bar\rho\delta_{\bar u} \quad \mbox{weakly in } L^\infty(0,T^*;\mathcal{M}(\R^d))
\]
as $N \to \infty$. 
\end{remark}

Section 3 is devoted to the proof of Theorem \ref{thm_main2} and the generalizations to  singular repulsive potentials. Finally, we complement these results by showing the existence of solutions to the particle system \eqref{main_par} in Appendix \ref{app_par} and the existence and uniqueness of strong solutions in the sense of Definition \ref{def_strong} for the hydrodynamic system \eqref{main_fluid} in the final Section 4 of this paper.

%
%
%
%
\section{Mean-field limit: from Newton to pressureless Euler}

In this section, we provide the details of the proof for Theorem \ref{thm_main1}. As mentioned before, one of our main mathematical tools is the discrete version of the modulated kinetic energy $\me^N (\mz^N(t) | U(t))$ defined in \eqref{dv_mke}.

\subsection{Proof of Theorem \ref{thm_main1}: quantitative bound estimate}

In this part, our main purpose is to give the quantitative bound estimate of the discrete modulated kinetic energy $\me^N (\mz^N(t) | U(t))$. 

\begin{proposition}\label{lem_mke}  Let $T > 0$, $\mz^N(t) = \{(x_i(t), v_i(t))\}_{i=1}^N$ be a solution to the particle system \eqref{main_par}, and let $(\rho, u)$ be the unique classical solution of the pressureless Euler system with nonlocal interaction forces \eqref{main_fluid} in the sense of Definition \ref{def_strong} up to time $T>0$. Then we have
\begin{align}\label{mke_ineq}
\begin{aligned}
&\frac{d}{dt} \me^N (\mz^N(t) | U(t))+ 2\gamma \me^N (\mz^N(t) | U(t)) + \frac1N \sum_{i=1}^N \psi(x_i - y_i)|v_i - u(x_i)|^2\cr
&\quad \leq C\me^N (\mz^N(t) | U(t)) + C d_{BL}^2(\rho^N_t(\cdot),\rho(\cdot,t)),
\end{aligned}
\end{align}
where $C>0$ is independent of $N$ and $\gamma$.
\end{proposition}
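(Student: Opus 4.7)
The plan is to differentiate $\me^N(\mz^N(t)|U(t))$ in time and rewrite the result as the sum of a damping contribution, a convective contribution, two mean-field-type discrepancies, and the nonlocal alignment dissipation. Setting $w_i(t) := u(x_i(t),t) - v_i(t)$, so that $\me^N = \frac{1}{2N}\sum_i |w_i|^2$, the chain rule along particle trajectories gives
\begin{equation*}
\dot w_i = (\partial_t u)(x_i,t) + ((v_i \cdot \nabla_x) u)(x_i,t) - \dot v_i(t).
\end{equation*}
I would then use the splitting $(v_i \cdot \nabla_x)u(x_i) = ((u \cdot \nabla_x)u)(x_i) - ((w_i \cdot \nabla_x)u)(x_i)$, substitute the momentum equation of \eqref{main_fluid} evaluated at $x=x_i$ for $(\partial_t u + u\cdot\nabla_x u)(x_i)$, and substitute the particle equation \eqref{main_par} (with $\e_N = 1$) for $\dot v_i$. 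The $\nabla_x V(x_i)$ terms cancel and the two damping terms combine into $-\gamma w_i$, leaving
\begin{equation*}
\dot w_i = -\gamma w_i - ((w_i \cdot \nabla_x)u)(x_i) + A_i + B_i,
\end{equation*}
where $A_i := \frac1N\sum_j \nabla_x W(x_i - x_j) - (\nabla_x W \star \rho)(x_i)$ and $B_i := \intr \psi(x_i - y)(u(y) - u(x_i))\rho(y)\,dy - \frac1N\sum_j \psi(x_i - x_j)(v_j - v_i)$.

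Next, $\frac{d}{dt}\me^N = \frac1N\sum_i w_i \cdot \dot w_i$ splits into four contributions. The damping gives $-2\gamma\me^N$ directly, and the convective term is controlled by $\|\nabla_x u\|_{L^\infty}\cdot 2\me^N$, both of which fit the right-hand side of \eqref{mke_ineq}. For the interaction discrepancy, observe that $A_i = \intr \nabla_x W(x_i - y)(\rho - \rho^N_t)(dy)$; since $\nabla_x W \in \W^{1,\infty}(\R^d)$, the dual characterization of the bounded Lipschitz distance (the content of Proposition \ref{lem_dbl} alluded to in the text) gives $|A_i| \leq \|\nabla_x W\|_{\W^{1,\infty}}\, d_{BL}(\rho^N_t, \rho(\cdot,t))$ uniformly in $i$; combined with Cauchy--Schwarz and Young's inequality this yields a bound $C(\me^N + d_{BL}^2)$.

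The alignment contribution $B_i$ is the main obstacle and requires an add-and-subtract trick. I would insert $\pm \frac1{N^2}\sum_{i,j}\psi(x_i - x_j)(u(x_j) - u(x_i))$, writing $B_i = B_i^{(1)} + B_i^{(2)}$ with
\begin{equation*}
B_i^{(1)} = \intr \psi(x_i - y)(u(y) - u(x_i))(\rho - \rho^N_t)(dy), \qquad B_i^{(2)} = \frac1N\sum_j \psi(x_i - x_j)\bigl[(u(x_j) - u(x_i)) - (v_j - v_i)\bigr].
\end{equation*}
The map $y\mapsto \psi(x_i - y)(u(y) - u(x_i))$ lies in $\W^{1,\infty}$ with norm bounded by $C(\|\psi\|_{L^\infty \cap Lip}, \|u\|_{L^\infty \cap Lip})$ uniformly in $x_i$, so $B_i^{(1)}$ is controlled exactly as $A_i$ was. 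For $B_i^{(2)}$, the identity $(u(x_j) - u(x_i)) - (v_j - v_i) = w_j - w_i$ converts the full contribution into
\begin{equation*}
\frac1N\sum_i w_i \cdot B_i^{(2)} = \frac1{N^2}\sum_{i,j}\psi(x_i - x_j)\, w_i \cdot (w_j - w_i),
\end{equation*}
and symmetrizing via the evenness $\psi(-x) = \psi(x)$ (swap $i\leftrightarrow j$ and average the two expressions) rewrites this as
\begin{equation*}
-\frac{1}{2N^2}\sum_{i,j}\psi(x_i - x_j)|w_i - w_j|^2 \leq 0,
\end{equation*}
which is the nonlocal alignment dissipation that must sit on the left-hand side of \eqref{mke_ineq}. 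Collecting the four contributions yields the claim.

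The key difficulty is precisely this symmetrization step: the naive bound on $|B_i|$ using only the $L^\infty$ norm of $\psi$ would produce a term quadratic in the $w_i$'s but with no dissipative sign, spoiling the Gronwall argument needed in the proof of Theorem \ref{thm_main1}. A secondary technical point is verifying that $\psi(x_i - \cdot)(u(\cdot) - u(x_i))$ has a $\W^{1,\infty}$ norm uniform in $x_i$, which is why both $\|\psi\|_{L^\infty \cap Lip}$ and $\|u\|_{L^\infty \cap Lip}$ appear in the final constant rather than merely the Lipschitz seminorms.
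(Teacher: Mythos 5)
Your argument is correct, and its overall skeleton coincides with the paper's: differentiate $\me^N(\mz^N(t)|U(t))$ along particle trajectories, use that the classical solution satisfies the velocity equation pointwise (legitimate since $\rho>0$ everywhere by Definition \ref{def_strong}), extract the damping $-2\gamma\me^N$ and the convective bound $2\|\nabla_x u\|_{L^\infty}\me^N$, and control the $W$- and $\psi$-discrepancies by testing $\rho^N_t-\rho$ against $\W^{1,\infty}$ functions, i.e.\ by $d_{BL}$ (this is just the dual definition of $d_{BL}$, not the content of Proposition \ref{lem_dbl}, which instead bounds $d_{BL}$ by the time integral of $\me^N$; also note the harmless sign slip $A_i=\int \nabla_x W(x_i-y)(\rho^N_t-\rho)(dy)$, not $(\rho-\rho^N_t)$). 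Where you genuinely deviate is the alignment term: you symmetrize in $i,j$ using the evenness of $\psi$ and obtain the full relative dissipation $\frac{1}{2N^2}\sum_{i,j}\psi(x_i-x_j)|w_i-w_j|^2$ on the left, whereas the paper splits the same quantity into the ``diagonal'' nonnegative term $\frac{1}{N^2}\sum_{i,j}\psi(x_i-x_j)|v_i-u(x_i)|^2$ (this is what the statement's $\frac1N\sum_i\psi(x_i-y_i)|v_i-u(x_i)|^2$ is meant to denote) kept on the left, plus the cross term $I_4^1=\frac1{N^2}\sum_{i,j}\psi(x_i-x_j)\,w_i\cdot w_j$ bounded crudely by $2\|\psi\|_{L^\infty}\me^N$ on the right. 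The two forms are equivalent up to the size of $C$: since $\frac{1}{N^2}\sum_{i,j}\psi_{ij}|w_i|^2=\frac{1}{2N^2}\sum_{i,j}\psi_{ij}|w_i-w_j|^2+\frac{1}{N^2}\sum_{i,j}\psi_{ij}\,w_i\cdot w_j$ and the last sum is at most $2\|\psi\|_{L^\infty}\me^N$, your inequality yields the stated one after enlarging $C$ by $2\|\psi\|_{L^\infty}$; you should record this one-line conversion (or state your version of the dissipation explicitly), since as written your left-hand side is not literally the one in \eqref{mke_ineq}. Your symmetrized version is in fact slightly sharper, and both serve identically in the Gr\"onwall argument for Theorem \ref{thm_main1}, where the dissipation term is simply dropped.
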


\begin{proof}
By the notion of our classical solution, we obtain from the momentum equation in \eqref{main_fluid} that
$$\begin{aligned}
\pa_t (u(x_i(t),t)) &=  v_i(t) \cdot \nabla_x u(x_i(t),t)  + (\pa_t u)(x_i(t),t)\cr
&= (v_i(t) - u(x_i(t),t)) \cdot  \nabla_x u(x_i(t),t) - \gamma u(x_i(t)) - \nabla_x V(x_i(t)) - (\nabla_x W \star \rho)(x_i) \cr
&\hspace{2cm} + \int_{\R^d} \psi(x_i(t) - y) (u(y,t) - u(x_i(t),t) )\rho(y,t)\,dy.
\end{aligned}$$
Then using this and \eqref{main_par}, we estimate the discrete modulated kinetic energy functional as
\begin{align}\label{est_mod}
\begin{aligned}
\frac{d}{dt} \me^N (\mz^N(t) | U(t)) = &\, \frac1N \sum_{i=1}^N (u(x_i(t),t) - v_i(t)) \cdot \lt( \pa_t u(x_i(t),t) + v_i(t) \cdot \nabla_x u(x_i(t),t)  - \dot v_i(t)\rt)\cr
=&\,\frac1N \sum_{i=1}^N (u(x_i(t),t) - v_i(t)) \cdot ((v_i(t) - u(x_i(t),t))\cdot \nabla_x)u(x_i(t),t) \cr
& - \frac\gamma N \sum_{i=1}^N |u(x_i(t),t) - v_i(t)|^2 \cr
&  - \frac1N \sum_{i=1}^N (u(x_i(t),t) - v_i(t))  \cdot  \lt((\nabla_x W \star \rho)(x_i) - (\nabla_x W \star \rho^N)(x_i)\rt) \cr
& + \frac1N \sum_{i=1}^N (u(x_i(t),t) - v_i(t)) \cdot F(x_i(t), v_i(t))\cr
=: &\,\sum_{i=1}^4 I_i,
\end{aligned}
\end{align}
where 
$$\begin{aligned}
F(x_i(t), v_i(t))&:= \int_{\R^d} \psi(x_i(t) - y) (u(y,t) - u(x_i(t),t) )\rho(y,t)\,dy - \frac1N \sum_{j=1}^N \psi(x_i(t) - x_j(t))(v_j(t) - v_i(t)).
\end{aligned}$$
Here $I_1$ can be easily estimated as
$$\begin{aligned}
I_1 &= \frac1N \sum_{i=1}^N \nabla_x u(x_i(t),t) : (u(x_i(t),t) - v_i(t)) \otimes (v_i(t) - u(x_i(t),t))\cr
&\leq \|\nabla_x u(\cdot,t)\|_{L^\infty}\frac1N \sum_{i=1}^N |u(x_i(t),t) - v_i(t)|^2 \cr
&= 2 \|\nabla_x u(\cdot,t)\|_{L^\infty}\me^N (\mz^N(t) | U(t)).
\end{aligned}$$
By definition, we obtain
$
I_2 = -2\gamma \me^N (\mz^N(t) | U(t)).
$
We next estimate $I_3$ as
$$\begin{aligned}
I_3 &= - \frac1N \sum_{i=1}^N (u(x_i(t),t) - v_i(t)) \cdot  \lt((\nabla_x W \star \rho)(x_i(t),t) - (\nabla_x W \star \rho^N)(x_i(t),t) \rt)\cr
&= \frac1N \sum_{i=1}^N (v_i(t) - u(x_i(t),t)) \cdot (\nabla_x W \star (\rho - \rho^N))(x_i(t),t).
\end{aligned}$$
On the other hand, the fact $\nabla_x W \in \W^{1,\infty}$ gives
\[
\|(\nabla_x W \star (\rho - \rho^N))(\cdot,t)\|_{L^\infty} \leq \|\nabla_x W\|_{\W^{1,\infty}}d_{BL}(\rho^N,\rho),
\]
and subsequently this asserts
$$\begin{aligned}
I_3 &\leq \|\nabla_x W\|_{\W^{1,\infty}}d_{BL}(\rho^N,\rho)\lt( \frac1N \sum_{i=1}^N |v_i(t) - u(x_i(t),t)| \rt) \cr
&\leq \|\nabla_x W\|_{\W^{1,\infty}}d_{BL}(\rho^N,\rho)\lt( \frac1N \sum_{i=1}^N |v_i(t) - u(x_i(t),t)|^2 \rt)^{1/2} \cr
&=\|\nabla_x W\|_{\W^{1,\infty}}d_{BL}(\rho^N,\rho) \sqrt{\me^N (\mz^N(t) | U(t))}.
\end{aligned}$$
For the estimate of $I_4$, we note that
$$\begin{aligned}
&\frac1N \sum_{j=1}^N \psi(x_i(t) - x_j(t))(v_j(t) - v_i(t))\cr
&\quad = \frac1N \sum_{j=1}^N \psi(x_i(t) - x_j(t))(v_j(t) - u(x_j(t),t)) +  \frac1N \sum_{j=1}^N \psi(x_i(t) - x_j(t))(u(x_j(t),t) - v_i(t))\cr
&\quad =: J_1 + J_2.
\end{aligned}$$
Then we rewrite $J_2$ as 
\[
J_2 = \int_{\R^d} \psi(x_i(t) - y) (u(y,t) - v_i(t))\rho^N(y,t)\,dy.
\]
This yields
$$\begin{aligned}
I_4 &= \frac1N \sum_{i=1}^N (u(x_i) - v_i) \cdot \frac1N \sum_{j=1}^N \psi(x_i - x_j)(u(x_j) - v_j)\cr
&\quad + \frac1N \sum_{i=1}^N (u(x_i) - v_i) \cdot \lt(\int_{\R^d} \psi(x_i - y) (u(y) - u(x_i) )\rho(y)\,dy -  \int_{\R^d} \psi(x_i - y) (u(y) - v_i)\rho^N(y)\,dy \rt)\cr
&=: I_4^1 + I_4^2.
\end{aligned}$$
Here we can easily estimate $I_4^1$ as 
\[
I_4^1 \leq \|\psi\|_{L^\infty}\lt(\frac1N \sum_{i=1}^N (u(x_i) - v_i)  \rt)^2 \leq \|\psi\|_{L^\infty}\frac1N \sum_{i=1}^N |u(x_i) - v_i|^2 = 2 \|\psi\|_{L^\infty}\me^N (\mz^N(t) | U(t)).
\]
Note that 
$$\begin{aligned}
&\frac1N\sum_{i=1}^N \int_{\R^d} \psi(x_i - y)(v_i - u(x_i)) (\rho^N(y) - \rho(y)) \cdot (u(y) - u(x_i))\,dy\cr
&\quad = \frac1N\sum_{i=1}^N \int_{\R^d} \psi(x_i - y)(v_i - u(x_i)) \rho^N(y) \cdot (u(y) - u(x_i))\,dy\cr
&\qquad + I_4^2 - \frac1N\sum_{i=1}^N \int_{\R^d} \psi(x_i - y)(v_i - u(x_i)) \rho^N(y) \cdot (u(y) - v_i)\,dy\cr
&\quad = I_4^2 + \frac1N \sum_{i=1}^N \psi(x_i - y_i)|v_i - u(x_i)|^2, 
\end{aligned}$$
that is,
$$\begin{aligned}
I_4^2  &= \frac1N\sum_{i=1}^N \int_{\R^d} \psi(x_i - y)(v_i - u(x_i)) (\rho^N(y) - \rho(y)) \cdot (u(y) - u(x_i))\,dy - \frac1N \sum_{i=1}^N \psi(x_i - y_i)|v_i - u(x_i)|^2.
\end{aligned}$$
On the other hand, we can estimate
$$\begin{aligned}
&\frac1N\sum_{i=1}^N \int_{\R^d} \psi(x_i - y)(v_i - u(x_i)) (\rho^N(y) - \rho(y)) \cdot (u(y) - u(x_i))\,dy\cr
&\quad = \frac1N\sum_{i=1}^N (v_i - u(x_i)) \cdot \int_{\R^d} \psi(x_i - y)u(y) (\rho^N(y) - \rho(y))  \,dy\cr
&\qquad  - \frac1N\sum_{i=1}^N(v_i - u(x_i)) \cdot u(x_i) \int_{\R^d} \psi(x_i - y) (\rho^N(y) - \rho(y)) \,dy\cr
&\quad =: K_1 + K_2,
\end{aligned}$$
where
$$\begin{aligned}
\lt|K_1\rt| &\leq \frac1N\sum_{i=1}^N |v_i - u(x_i)| \lt|\int_{\R^d} \psi(x_i - y)u(y) (\rho^N(y) - \rho(y))  \,dy\rt|\cr
&\leq \|\psi u\|_{L^\infty \cap Lip}\frac1N\sum_{i=1}^N |v_i - u(x_i)| \,d_{BL}(\rho^N,\rho)\cr
&\leq \|\psi u\|_{L^\infty \cap Lip}\lt( \frac1N\sum_{i=1}^N |v_i - u(x_i)|^2\rt)^{1/2}d_{BL}(\rho^N,\rho)\cr
&\leq \|\psi u\|_{L^\infty \cap Lip}\sqrt 2 \sqrt{\me^N (\mz^N(t) | U(t))} \,d_{BL}(\rho^N,\rho).
\end{aligned}$$
Similarly, we also find
$$\begin{aligned}
\lt|K_2\rt| &\leq \frac1N\sum_{i=1}^N |v_i - u(x_i)| |u(x_i)|\lt|\int_{\R^d} \psi(x_i - y) (\rho^N(y) - \rho(y))  \,dy\rt|\cr
&\leq \|u\|_{L^\infty}\|\psi\|_{L^\infty \cap Lip}\sqrt 2 \sqrt{\me^N (\mz^N(t) | U(t))} \,d_{BL}(\rho^N,\rho).
\end{aligned}$$
Combining all of the above estimates, we have
$$\begin{aligned}
&\frac{d}{dt} \me^N (\mz^N(t) | U(t))+ 2\gamma \me^N (\mz^N(t) | U(t)) + \frac1N \sum_{i=1}^N \psi(x_i - y_i)|v_i - u(x_i)|^2\cr
&\quad \leq 2\lt(\|\nabla_x u(\cdot,t)\|_{L^\infty} + \|\psi\|_{L^\infty}\rt)\me^N (\mz^N(t) | U(t)) \cr
&\qquad + \sqrt 2\lt( \|\psi u\|_{L^\infty \cap Lip}+ \|u(\cdot,t)\|_{L^\infty}\|\psi\|_{L^\infty \cap Lip} + \|\nabla_x W\|_{\W^{1,\infty}}\rt) \sqrt{\me^N (\mz^N(t) | U(t))} \,d_{BL}(\rho^N_t(\cdot),\rho(\cdot,t)).
\end{aligned}$$
This completes the proof.
\end{proof}

\begin{remark} We assumed that the communication weight $\psi$ is nonnegative, which takes into account the velocity alignment forces, however a similar bound estimate for the discrete kinetic energy $\me^N$ to that in Proposition \ref{lem_mke} can be obtained. Indeed, if $\psi$ can be negative, but bounded, then the third term on the left hand side of \eqref{mke_ineq} can be estimated as
\[
\lt|\frac1N \sum_{i=1}^N \psi(x_i - y_i)|v_i - u(x_i)|^2\rt| \leq 2\|\psi\|_{L^\infty} \me^N (\mz^N | U).
\]
This yields
\[
\frac{d}{dt} \me^N (\mz^N(t) | U(t))+ 2\gamma\me^N (\mz^N(t) | U(t))\leq C\me^N (\mz^N(t) | U(t)) + C d_{BL}^2(\rho^N_t(\cdot),\rho(\cdot,t)),
\]
where $C>0$ is independent of $N$ and $\gamma$.
\end{remark}

In order to close the estimate in Proposition \ref{lem_mke}, we need to estimate the bounded Lipschitz distance between $\rho^N$ and $\rho$. In the proposition below, we provide the relation between the bounded Lipschitz distance and the discrete modulated kinetic energy.

\begin{proposition}\label{lem_dbl} Let $\rho^N$ and $\rho$ be defined as above. Then we have
\[
d_{BL}^2(\rho^N(\cdot,t),\rho(\cdot,t)) \leq Cd_{BL}^2(\rho^N_0,\rho_0) + C\int_0^t \me^N (\mz^N(s) | U(s))\,ds,
\]
where $C > 0$ depends only on $\|u\|_{L^\infty(0,T;Lip)}$ and $T$. 
\end{proposition}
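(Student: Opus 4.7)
The natural first attempt would be to test $\rho^N_t - \rho(\cdot,t)$ against an arbitrary $\phi \in \Omega$, differentiate in time, and use the continuity equations. However, this produces a term $\int \nabla_x\phi \cdot u\, d(\rho^N_t - \rho(\cdot,t))$ whose integrand is only bounded but not Lipschitz (since $\phi$ is merely Lipschitz, $\nabla_x\phi$ need not be), so it cannot be reabsorbed into $d_{BL}$. I will instead bypass this obstacle by working with the characteristic flow of $u$, which is legitimate because Definition \ref{def_strong} gives exactly the regularity needed.

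Since $u \in L^\infty(0,T;\W^{1,\infty}(\R^d))$, the flow $T_t:\R^d\to\R^d$ defined by $\pa_s T_s(x) = u(T_s(x),s)$, $T_0(x)=x$, is well defined on $[0,T]$ and satisfies $Lip(T_t) \le e^{LT} =: L_T$, where $L := \|u\|_{L^\infty(0,T;Lip)}$. By uniqueness for the linear continuity equation with Lipschitz drift, the classical solution in Definition \ref{def_strong} satisfies $\rho(\cdot,t) = T_t\#\rho_0$. Letting $y_i(t)$ solve $\dot y_i = u(y_i,t)$ with $y_i(0)=x_i(0)$, we similarly have $T_t\#\rho^N_0 = \frac1N\sum_{i=1}^N \delta_{y_i(t)}$. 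The triangle inequality then gives
\[
d_{BL}(\rho^N_t,\rho(\cdot,t)) \le d_{BL}(\rho^N_t, T_t\#\rho^N_0) + d_{BL}(T_t\#\rho^N_0, T_t\#\rho_0) =: A(t) + B(t).
\]
For $B(t)$, note that for any $\phi \in \Omega$ one has $\|\phi\circ T_t\|_{L^\infty} \le 1$ and $Lip(\phi\circ T_t) \le L_T$, so $(\phi\circ T_t)/L_T \in \Omega$ and $B(t) \le L_T\, d_{BL}(\rho^N_0,\rho_0)$.

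For $A(t)$, since any $\phi \in \Omega$ is $1$-Lipschitz,
\[
\lt|\intr \phi\, d(\rho^N_t - T_t\#\rho^N_0)\rt| = \frac1N \lt|\sum_{i=1}^N \bigl(\phi(x_i(t)) - \phi(y_i(t))\bigr)\rt| \le \frac1N\sum_{i=1}^N |x_i(t) - y_i(t)|.
\]
Since $x_i$ and $y_i$ start from the same point,
\[
|x_i(t) - y_i(t)| \le \int_0^t |v_i(s) - u(x_i(s),s)|\,ds + L\int_0^t |x_i(s) - y_i(s)|\,ds,
\]
so Gronwall yields $|x_i(t)-y_i(t)| \le e^{LT} \int_0^t |v_i(s)-u(x_i(s),s)|\,ds$. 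Averaging over $i$ and applying Cauchy--Schwarz in the index $i$ produces
\[
\frac1N\sum_{i=1}^N |x_i(t) - y_i(t)| \le e^{LT}\int_0^t \sqrt{2\,\me^N(\mz^N(s)|U(s))}\,ds,
\]
and squaring, together with Cauchy--Schwarz in time, gives $A(t)^2 \le 2T\,e^{2LT}\int_0^t \me^N(\mz^N(s)|U(s))\,ds$. Combining the bounds on $A$ and $B$ through $(A+B)^2 \le 2A^2 + 2B^2$ yields the stated inequality.

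The only subtle point is the duality step for $B(t)$ (that $d_{BL}$ is controlled under Lipschitz push-forward by a factor $\max\{1,Lip(T_t)\}$); everything else reduces to Gronwall and Cauchy--Schwarz. No delicate cancellation is required because all terms are controlled pathwise on each characteristic, which is precisely what the characteristic representation buys us.
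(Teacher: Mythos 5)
Your proof is correct and follows essentially the same route as the paper: the intermediate measure $T_t\#\rho^N_0$ in your triangle inequality is exactly the paper's splitting of $\int\phi\,d(\rho^N_t-\rho(\cdot,t))$ into the terms $L_1$ (particle trajectories versus characteristics started at the same points, handled by Gr\"onwall and Cauchy--Schwarz to produce $\int_0^t\me^N\,ds$) and $L_2$ (push-forward of the initial error, handled by the Lipschitz bound on the flow). The only cosmetic difference is that you invoke uniqueness for the linear continuity equation to identify $\rho(\cdot,t)=T_t\#\rho_0$, whereas the paper verifies this identity directly via the characteristic representation and a change of variables.
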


\begin{proof}
Consider a forward characteristics $\eta=\eta(x,t)$ for the system \eqref{main_fluid} satisfying the following ODEs:
\bq\label{eq_char}
\frac{d\eta(x,t)}{dt} = u(\eta(x,t),t)
\eq
subject to the initial data:
$
\eta(x,0) = x \in \R^d.
$
The characteristic $\eta$ is well-defined because of the Lipschitz continuous regularity of $u$. Note that along the characteristic, the solution $\rho$ can be written as the mild form:
\[
\rho(\eta(x,t),t) = \rho_0(x) \exp\lt(-\int_0^t (\nabla_x \cdot u)(\eta(x,s),s)\,ds\rt),
\] 
and thus we get
\[
\rho_0(x) = \rho(\eta(x,t),t)\exp\lt(\int_0^t (\nabla_x \cdot u)(\eta(x,s),s)\,ds\rt) = \rho(\eta(x,t),t) det\lt((\nabla_x \eta)(x,t) \rt).
\]
This together with using the change of variables yields 
\bq\label{push_rho}
\intr \phi (\eta(x,t)) \rho_0(x)\,dx = \intr \phi (\eta(x,t)) \rho(\eta(x,t),t) det\lt((\nabla_x \eta)(x,t) \rt)dx = \intr \phi(x) \rho(x,t)\,dx
\eq
for $\phi \in (L^\infty \cap Lip)(\R^d)$. Moreover, we find from \eqref{eq_char} that
\begin{align}\label{lip_eta}
\begin{aligned}
\lt|\eta(x,t) - \eta(y,t)\rt| &= \lt| x- y + \int_0^t \lt(u(\eta(x,s),s) - u(\eta(y,s),s)\rt) ds \rt|\cr
&\leq |x-y| + \|u\|_{Lip} \int_0^t |\eta(x,s) - \eta(y,s)|\,ds,
\end{aligned}
\end{align}
and applying Gr\"onwall's lemma to the above gives
\[
\lt|\eta(x,t) - \eta(y,t)\rt| \leq C|x - y|,
\]
where $C>0$ depends only on $\|u\|_{L^\infty(0,T;Lip)}$ and $T$, i.e., $\eta$ is Lipschitz continuous in $\R^d$. We also get 
\[
|x_i(t) - \eta(x,t)| \leq |x_i(0) - x| + \int_0^t | v_i(s) - u(\eta(x,s),s)|\,ds.
\]
Here the second term on the right hand side of the above inequality can be estimated as
$$\begin{aligned}
\int_0^t | v_i(s) - u(\eta(x,s),s)|\,ds
&\leq \int_0^t | v_i(s) - u(x_i(s),s)|\,ds + \int_0^t | u(x_i(s),s) - u(\eta(x,s),s)|\,ds\cr
& \leq \int_0^t | v_i(s) - u(x_i(s),s)|\,ds + \|u\|_{Lip} \int_0^t | x_i(s) - \eta(x,s)|\,ds.
\end{aligned}$$
Thus we get
$$\begin{aligned}
|x_i(t) - \eta(x,t)| &\leq |x_i(0) - x| +\int_0^t | v_i(s) - u(x_i(s),s)|\,ds  + \|u\|_{Lip} \int_0^t | x_i(s) - \eta(x,s)|\,ds,
\end{aligned}$$
and applying Gr\"onwall's lemma to the above deduces
\[
|x_i(t) - \eta(x,t)| \leq C|x_i(0) - x| +C\int_0^t | v_i(s) - u(x_i(s),s)|\,ds,
\]
where $C$ depends only on $\|u\|_{L^\infty(0,T;Lip)}$ and $T$. In particular, by taking $x = x_i(0)$, we get
\bq\label{est_d1}
|x_i(t) - \eta(x_i(0),t)| \leq C\int_0^t | v_i(s) - u(x_i(s),s)|\,ds.
\eq
Then for any $\phi \in (L^\infty \cap Lip)(\R^d)$ we use \eqref{push_rho} to estimate 
\begin{align}\label{est_l}
\begin{aligned}
\lt|\int_{\R^d} \!\!\phi(x) (\rho^N - \rho)\,dx\rt| 
& \!=\! \lt|\frac1N \sum_{i=1}^N \phi(x_i(t)) \!-\! \intr \phi(\eta(x,t))\rho_0\,dx \rt|\cr
&= \!\lt|\frac1N \sum_{i=1}^N (\phi(x_i(t)) \!- \!\phi(\eta(x_i(0),t))) \!+\! \frac1N\sum_{i=1}^N \phi(\eta(x_i(0),t))\!-\! \intr \!\!\phi(\eta(x,t))\rho_0\,dx \rt|\cr
& \leq\! \frac1N \sum_{i=1}^N |\phi(x_i(t)) - \phi(\eta(x_i(0),t))|\! +\! \lt|\frac1N\sum_{i=1}^N \phi(\eta(x_i(0),t)) \!- \!\intr \!\!\phi(\eta(x,t))\rho_0\,dx \rt| \cr
&=: L_1 + L_2.
\end{aligned}
\end{align}
For $L_1$, we use the Lipschitz continuity together with \eqref{est_d1} to obtain
\begin{align}\label{est_l1}
\begin{aligned}
L_1 &\leq \frac{\|\phi\|_{Lip}}N\sum_{i=1}^N |x_i(t) - \eta(x_i(0),t)| \leq \frac{\|\phi\|_{Lip}}N\int_0^t\sum_{i=1}^N| v_i(s) - u(x_i(s),s)|\,ds\cr
&\leq \|\phi\|_{Lip}\sqrt{T}\lt( \int_0^t \frac1N\sum_{i=1}^N| v_i(s) - u(x_i(s),s)|^2\,ds\rt)^{1/2}\!\!\!\!\!\!= \|\phi\|_{Lip}\sqrt{T}\lt(\int_0^t \me^N (\mz^N(s) | U(s))\,ds \rt)^{1/2}.
\end{aligned}
\end{align}
For the estimate of $L_2$, we notice that 
\[
\frac1N\sum_{i=1}^N \phi(\eta(x_i(0),t)) = \intr \phi(\eta(x,t))\rho^N_0\,dx.
\]
Using this identity, the Lipschitz estimate for $\eta$ in \eqref{lip_eta}, and the fact $\phi \in (L^\infty \cap Lip)(\R^d)$, we find
\bq\label{est_l2}
L_2 = \lt| \intr \phi(\eta(x,t))(\rho^N_0 - \rho_0)\,dx \rt| \leq \lt(\|\phi\|_{L^\infty} + \|\phi\|_{Lip}\|\eta\|_{Lip}\rt) \,d_{BL}(\rho^N_0, \rho_0).
\eq
Putting \eqref{est_l1} and \eqref{est_l2} into \eqref{est_l} yields
\[
d_{BL}(\rho^N_t(\cdot),\rho(\cdot,t))\leq Cd_{BL}(\rho^N_0, \rho_0) + C\lt(\int_0^t \me^N (\mz^N(s) | U(s))\,ds \rt)^{1/2}
\]
for $0 \leq t \leq T$, where $C > 0$ depends only on $\|u\|_{L^\infty(0,T;Lip)}$ and $T$.
\end{proof}

\begin{proof}[Proof of Theorem \ref{thm_main1}] Applying Gr\"onwall's lemma and Young's inequality to the differential inequality in Proposition \ref{lem_mke} yields
\[
\me^N (\mz^N(t) | U(t))  \leq C\me^N (\mz^N_0 | U_0)  + C \int_0^t d^2_{BL}(\rho^N_s(\cdot),\rho(\cdot,s))\,ds,
\]
where $C > 0$ is independent of $N$. We then use Proposition \ref{lem_dbl} to have
$$\begin{aligned}
\me^N (\mz^N(t) | U(t))  + d_{BL}^2(\rho^N_t(\cdot),\rho(\cdot,t))
\leq &\, C\me^N (\mz^N_0 | U_0)   + Cd_{BL}^2(\rho^N_0,\rho_0) \cr
&+ C \int_0^t d^2_{BL}(\rho^N_s(\cdot),\rho(\cdot,s))\,ds + C\int_0^t \me^N (\mz^N(s) | U(s))\,ds.
\end{aligned}$$
We finally apply Gr\"onwall's to the above to conclude the desired result.
\end{proof}

\subsection{Proof of Theorem \ref{thm_main1}: convergence estimates} In this part, we provide the details on the proof for convergences appeared in Theorem \ref{thm_main1}. For this, it suffices to prove the following lemma.

\begin{lemma}
\begin{itemize}
\item[(i)] Convergence of local moment:
\[
d_{BL}\lt(\intr v \,\mu^N(dv), \, \rho u\rt) \leq \lt(\intrr |v - u(x) |^2\mu^N(dxdv) \rt)^{1/2} + Cd_{BL}(\rho^N, \rho).
\]
\item[(ii)] Convergence of local energy:
$$\begin{aligned}
&d_{BL}\lt(\intr  (v \otimes v)\, \mu^N(dv), \,\rho u \otimes u\rt) \cr
&\quad \leq \intrr |v - u(x) |^2\mu^N(dxdv) + C\lt(\intrr |v - u(x) |^2\mu^N(dxdv) \rt)^{1/2}+ Cd_{BL}(\rho^N, \rho).
\end{aligned}$$
\item[(iii)] Convergence of empirical measure:
\[
d^2_{BL}(\mu^N, \rho\delta_{u}) \leq C\int_{\R^d \times \R^d} |v - u(x) |^2\,\mu^N(dxdv) + Cd^2_{BL}(\rho^N, \rho).
\]
\end{itemize}
Here $C>0$ is independent of $N$.
\end{lemma}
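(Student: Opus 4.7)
The plan for all three bounds is the same decomposition trick: inside each test-function integral, insert $u(x)$ (or $\phi(x,u(x))$ in (iii)) so that the error splits into a ``velocity deviation'' piece, absorbed into $\intrr |v-u(x)|^2\,\mu^N(dxdv)$ via Cauchy--Schwarz using that $\mu^N$ is a probability measure, plus a ``density'' piece of the form $\intr g(x)(\rho^N-\rho)(dx)$ for a bounded Lipschitz $g$, absorbed into $d_{BL}(\rho^N,\rho)$. The key structural input is that $\rho^N$ is the $x$-marginal of $\mu^N$, so every $x$-only test factor paired with $\mu^N$ can be rewritten as a pairing with $\rho^N$. For the vector- and matrix-valued statements in (i)--(ii), I interpret $d_{BL}$ componentwise by pairing each scalar component against $\phi \in \Omega$.

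For (i), taking $\phi \in \Omega$ I would write
$$\intrr \phi(x)\,v\,\mu^N(dxdv) - \intr \phi(x)u(x)\rho\,dx = \intrr \phi(x)(v-u(x))\,\mu^N(dxdv) + \intr \phi(x)u(x)(\rho^N-\rho)(dx),$$
and bound the first term by $\|\phi\|_{L^\infty}(\intrr|v-u|^2\mu^N)^{1/2}$ via Cauchy--Schwarz, and the second by $\|\phi u\|_{L^\infty\cap Lip}\,d_{BL}(\rho^N,\rho)$, where the latter norm is controlled by $\|u\|_{L^\infty\cap Lip}$ since $\|\phi\|_{L^\infty},Lip(\phi)\leq 1$. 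For (ii) I would expand $v\otimes v = (v-u)\otimes(v-u) + u\otimes(v-u) + (v-u)\otimes u + u\otimes u$ and test each piece against $\phi \in \Omega$: the first contributes $\intrr|v-u|^2\mu^N$, the two cross terms contribute at most $2\|u\|_{L^\infty}\intrr |v-u|\mu^N \leq 2\|u\|_{L^\infty}(\intrr|v-u|^2\mu^N)^{1/2}$, and the $u\otimes u$ piece becomes $\intr \phi\,u\otimes u\,(\rho^N-\rho)$, bounded by $C\,d_{BL}(\rho^N,\rho)$ with $C$ controlled by $\|u\|_{L^\infty\cap Lip}^2$.

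For (iii), take $\phi \in \Omega$ on $\R^d \times \R^d$ and split
$$\intrr \phi(x,v)\mu^N(dxdv) - \intr \phi(x,u(x))\rho\,dx = \intrr [\phi(x,v)-\phi(x,u(x))]\mu^N(dxdv) + \intr \phi(x,u(x))(\rho^N-\rho)(dx).$$
The Lipschitz condition on $\phi$ yields $|\phi(x,v) - \phi(x,u(x))| \leq |v - u(x)|$, so the first piece is at most $(\intrr|v-u|^2\mu^N)^{1/2}$ by Cauchy--Schwarz. The composite $x \mapsto \phi(x,u(x))$ is bounded by $1$ and Lipschitz with constant $\leq 1+\|u\|_{Lip}$, so the second piece is bounded by $(1+\|u\|_{Lip})\,d_{BL}(\rho^N,\rho)$. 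Squaring and applying $(a+b)^2 \leq 2a^2 + 2b^2$ delivers the stated inequality. I do not anticipate any substantial obstacle here; the only care needed is in bookkeeping Lipschitz constants of the composite test functions and in the componentwise interpretation of $d_{BL}$ for the vector- and tensor-valued local moments in (i)--(ii).
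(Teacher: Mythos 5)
Your proof is correct and follows essentially the same route as the paper: the same add-and-subtract decompositions, Cauchy--Schwarz against the probability measure $\mu^N$, and absorption of the $x$-only pieces into $d_{BL}(\rho^N,\rho)$ via the $L^\infty\cap Lip$ norms of the composite test functions such as $\phi u$ and $x\mapsto \phi(x,u(x))$. The only cosmetic difference is in (ii), where the paper channels the cross terms through the estimate of part (i) while you bound them directly by Cauchy--Schwarz; both yield the stated inequality.
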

\begin{proof}
(i)  For any $\phi \in (L^\infty \cap Lip)(\R^d)$, we get
$$\begin{aligned}
&\lt|\intr \phi(x) \lt(\intr v\, \mu^N(x,dv) - (\rho u)(x) \rt)dx \rt|\cr
&\quad = \lt|\intrr \phi(x) (v-u(x))\,\mu^N(dxdv) + \intr \phi(x) u(x) (\rho^N(x) - \rho(x))\,dx \rt|\cr
&\quad \leq \|\phi\|_{L^\infty}\lt(\intrr  |v-u(x)|\,\mu^N(dxdv) \rt) + \|\phi u\|_{L^\infty \cap Lip} \,d_{BL}(\rho^N, \rho)\cr
&\quad \leq  \|\phi\|_{L^\infty}\lt(\intrr  |v-u(x)|^2\,\mu^N(dxdv) \rt)^{1/2} \cr
&\qquad + \lt(\|\phi\|_{L^\infty} \|u\|_{L^\infty} + \|\phi\|_{L^\infty}\|u\|_{Lip} + \|u\|_{L^\infty}\|\phi\|_{Lip} \rt)d_{BL}(\rho^N, \rho).
\end{aligned}$$

(ii) Adding and subtracting, we notice that
$$\begin{aligned}
\intr (v \otimes v)\, \mu^N(dv) - \rho u\otimes u&= \intr (v - u) \otimes (v-u)\, \mu^N(dv) + u \otimes \lt(\intr v\mu^N(dv) - \rho u \rt) \cr
&\quad + \lt(\intr v\mu^N(dv) - \rho u \rt)\otimes u + (\rho - \rho^N) u\otimes u.
\end{aligned}$$
This yields for $\phi \in (L^\infty \cap Lip)(\R^d)$
$$\begin{aligned}
&\lt|\intr \phi(x)\lt(\intr (v \otimes v)\, \mu^N(dv) - (\rho u)(x)\otimes u(x)\rt)dx \rt|\cr
&\quad \leq \|\phi\|_{L^\infty} \intrr |v - u|^2 \, \mu^N(dxdv) + 2\|\phi u\|_{L^\infty\cap Lip}\,d_{BL}\lt(\intr v \,\mu^N(dv), \ \rho u\rt) \cr
&\qquad + \|\phi |u|^2\|_{L^\infty \cap Lip} \,d_{BL}(\rho^N, \rho).
\end{aligned}$$

(iii)  For any $\varphi \in (L^\infty \cap Lip)(\R^d \times \R^d)$, we find
$$\begin{aligned}
&\lt|\intrr  \varphi(x,v) \lt(\mu^N(dxdv) - \rho(x)dx \otimes\delta_{u(x)}(dv)\rt)\rt|\cr
&\quad = \lt| \intrr  \varphi(x,v)\, \mu^N(dxdv) - \intr \varphi(x,u(x))\rho(x)\,dx \rt|\cr
&\quad = \lt| \intrr  (\varphi(x,v) -\varphi(x,u(x)))\,\mu^N(dxdv)+ \intr \varphi(x,u(x))(\rho^N - \rho)(x)\,dx\rt|\cr
&\quad \leq \|\varphi\|_{Lip}\intrr  |v - u(x)|\,\mu^N(dxdv) + (\|\varphi\|_{L^\infty} + \|\varphi\|_{Lip}\|u\|_{Lip})d_{BL}(\rho^N, \rho)\cr
&\quad \leq C\lt(\intrr  |v - u(x)|^2\,\mu^N(dxdv) \rt)^{1/2} + Cd_{BL}(\rho^N, \rho).
\end{aligned}$$
\end{proof}

\subsection{Singular interaction potential cases: Coulomb and Riesz potentials }
In this part, we discuss the singular interaction potentials. Let $d \geq 1$ and consider a potential $\wt W$ has the form of
\bq\label{w_a}
\wt W(x) = |x|^{-\alpha} \quad \max\{d-2,0\} \leq \alpha < d \quad \forall \, d \geq 1 
\eq
or 
\bq\label{w_a2}
\wt W(x) = -\log|x| \quad \mbox{for $d=1$ or $2$}.
\eq
Note that the case $\alpha=d-2$ with $d \geq 3$ or \eqref{w_a2} with $d=2$ corresponds to the Coulomb potential, and the other cases are called 
Riesz potentials. With these types of singular potentials, in a recent work \cite{Spre}, the quantitative mean-field limit from the particle system \eqref{main_par} to the pressureless Euler-type system when $\gamma = 0$, $V\equiv 0$ and $\psi \equiv 0$. More precisely, in \cite{Spre}, the following modulated free energy is employed to measure the error between particle and continuum systems:
\[
\mf^N (\mz^N(t) | U(t)):= \frac12\int_{\R^d \times \R^d \setminus \Delta} \wt W(x-y)(\rho^N - \rho)(x)(\rho^N - \rho)(y)\,dxdy,
\]
where $\Delta$ denotes the diagonal in $\R^d \times \R^d$.

\begin{theorem}\label{thm_main12} Let $T > 0$ and $\mz^N(t) = \{(x_i(t), v_i(t))\}_{i=1}^N$ be a solution to the particle system \eqref{main_par}, and let $(\rho, u)$ be the unique classical solution of the pressureless Euler system \eqref{main_fluid} with nonlocal interaction forces $\wt W$, which is appeared in \eqref{w_a} or \eqref{w_a2}, instead of $W$ up to time $T>0$ with initial data $(\rho_0, u_0)$. Assume that the classical solution $(\rho,u)$ satisfies $\rho \in L^\infty(0,T;(\mathcal{P} \cap L^\infty)(\R^d))$ and $u \in L^\infty(0,T;\W^{1,\infty}(\R^d))$. In the case $s \geq d-1$, we further assume that $\rho \in L^\infty(0,T;\mc^\sigma(\R^d))$ for some $\sigma > \alpha-d+1$. Then there exists $\beta < 2$ such that 
\begin{align}\label{res_thm12}
\begin{aligned}
\int_{\R^d \times \R^d} |v - u(x,t) |^2\,\mu^N_t(dxdv) &\,+ d^2_{BL}(\rho^N_t(\cdot), \rho(\cdot,t))  
+ \int_{\R^d \times \R^d \setminus \Delta} \wt W(x-y)(\rho^N - \rho)(x)(\rho^N - \rho)(y)\,dxdy  \cr
&\leq C\int_{\R^d \times \R^d} |v - u_0(x) |^2\,\mu^N_0(dxdv) + Cd^2_{BL}(\rho^N_0, \rho_0) \cr
&\quad + C \int_{\R^d \times \R^d \setminus \Delta} \wt W(x-y)(\rho^N_0 - \rho_0)(x)(\rho^N_0 - \rho_0)(y)\,dxdy + CN^{\beta-2},
\end{aligned}
\end{align}
where $C>0$ is independent of $N$. 
\end{theorem}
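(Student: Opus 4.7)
The approach is to mirror the proof of Theorem \ref{thm_main1}, but to augment the discrete modulated kinetic energy $\me^N(\mz^N(t)\,|\,U(t))$ with the modulated free energy $\mf^N(\mz^N(t)\,|\,U(t))$ associated with $\wt W$. The role of $\mf^N$ is to replace the estimate on the interaction term that in Proposition \ref{lem_mke} relied on $\|\nabla_x W\|_{\W^{1,\infty}}$, which is no longer available for $\wt W$. The bounded Lipschitz piece is kept unchanged, since Proposition \ref{lem_dbl} only uses the continuity equation and the Lipschitz regularity of $u$, neither of which depends on the form of the interaction potential.

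The first step is to redo the computation of $\frac{d}{dt}\me^N$ exactly as in \eqref{est_mod}. The terms $I_1$, $I_2$ and $I_4$ are unchanged and contribute $C\me^N+Cd_{BL}^2$ as before. The only term requiring a new treatment is
\[
I_3 = \frac1N\sum_{i=1}^N (v_i - u(x_i))\cdot \intr \nabla_x \wt W(x_i - y)\,d(\rho - \rho^N)(y),
\]
which is no longer absolutely convergent via the Lipschitz norm of $\nabla_x \wt W$. The plan is to cancel $I_3$ against the velocity-fluctuation part of $\frac{d}{dt}\mf^N$.

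The second step is to compute $\frac{d}{dt}\mf^N$. Using the continuity equation for $\rho$ with velocity $u$ and the distributional continuity equation for $\rho^N$ with the ``particle velocity field'' obtained by plugging $v_i = u(x_i) + (v_i - u(x_i))$ into the transport by $u$, integration by parts formally yields
\[
\frac{d}{dt}\mf^N = -\int_{(\R^d\times\R^d)\setminus\Delta} \bigl(u(x)-u(y)\bigr)\cdot \nabla_x \wt W(x-y)\,d(\rho^N-\rho)(x)\,d(\rho^N-\rho)(y) - I_3 + \text{lower-order}.
\]
After adding $\frac{d}{dt}\me^N + \frac{d}{dt}\mf^N$, the singular term $I_3$ cancels exactly, leaving the symmetric ``commutator'' integral as the remaining singular object.

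The third step, which is the technical heart of the argument and the main obstacle, is to bound this commutator in the form
\[
\left|\int_{(\R^d\times\R^d)\setminus\Delta} \!\!\!\bigl(u(x)-u(y)\bigr)\cdot \nabla_x \wt W(x-y)\,d(\rho^N-\rho)(x)\,d(\rho^N-\rho)(y)\right| \leq C\bigl(\mf^N + N^{\beta-2}\bigr),
\]
for some $\beta<2$, with $C$ depending on $\|\nabla_x u\|_{L^\infty}$, $\|\rho\|_{L^\infty}$ (and $\|\rho\|_{\mc^\sigma}$ in the regime $\alpha\geq d-1$). This is precisely the content of Serfaty's estimate in \cite{Spre}: because $\rho^N$ is atomic and $\wt W$ singular at the origin, the renormalization that excludes the diagonal $\Delta$ necessarily generates a self-energy defect, which after smoothing at scale $N^{-1/d}$ and integration by parts against $\wt W$ produces the error $N^{\beta-2}$. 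I would import this bound as a black box, taking care that the regularity assumptions $\rho\in L^\infty_tL^\infty_x$ and (for $\alpha\geq d-1$) $\rho\in L^\infty_t \mc^\sigma_x$ exactly match the hypotheses of Theorem \ref{thm_main12}, and that the coercivity of $\mf^N$ plus the diagonal-truncation trick guarantee that $\mf^N$ remains nonnegative up to an $O(N^{\beta-2})$ error.

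The final step is to assemble everything. Adding the differential inequalities for $\me^N$ and $\mf^N$ gives
\[
\frac{d}{dt}\bigl(\me^N + \mf^N\bigr) \leq C\bigl(\me^N + \mf^N\bigr) + C d_{BL}^2(\rho^N_t,\rho(\cdot,t)) + CN^{\beta-2},
\]
and Proposition \ref{lem_dbl} provides $d_{BL}^2(\rho^N_t,\rho(\cdot,t)) \leq Cd_{BL}^2(\rho^N_0,\rho_0) + C\int_0^t \me^N(s)\,ds$. A Gr\"onwall argument applied to $\me^N + \mf^N + d_{BL}^2$ then yields the claimed stability estimate \eqref{res_thm12}. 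The estimates on $I_1$, $I_2$, $I_4$ and the Lipschitz propagation of characteristics are identical to those in Section 2.1, so the only genuinely new ingredient in this proof is the commutator bound from \cite{Spre}, which is also the source of the unavoidable $N^{\beta-2}$ correction.
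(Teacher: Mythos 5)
Your proposal follows essentially the same route as the paper: the paper likewise keeps the estimates of $I_1$, $I_2$, $I_4$ and Proposition \ref{lem_dbl} unchanged, re-expresses the interaction term so that it combines with $\frac{d}{dt}\mf^N$ into the symmetric commutator $\frac12\int_{\R^d\times\R^d\setminus\Delta}(u(x)-u(y))\cdot\nabla_x\wt W(x-y)\,(\rho^N-\rho)(x)(\rho^N-\rho)(y)\,dxdy$, bounds it by $C\mf^N+CN^{\beta-2}$ via \cite[Proposition 1.1]{Spre}, and closes with Gr\"onwall. Your outline is correct and matches the paper's argument in all essential steps.
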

\begin{remark}If the interaction potential $W$ is singular at the origin, then the term related to $W$ in \eqref{main_par} should be replaced by $\frac1N \sum_{j:j \neq i} \nabla_x W (x_i - x_j)$ since $W(0)$ can not be well defined. This is why the diagonal $\Delta$ is excluded in the integration in the modulated potential energy. 
\end{remark}

\begin{remark}If the right hand side of \eqref{res_thm12} converges to zero as $N \to \infty$, then we also have the same convergence estimates in Theorem \ref{thm_main1}.
\end{remark}
\begin{proof}[Proof of Theorem \ref{thm_main12}] For the proof, we only need to reestimate $I_3$ term in the proof of Proposition \ref{lem_mke}. Although this proof is almost the same with that of \cite{Spre}, we provide the details here for the completeness of our work. Let us denote by 
$$\begin{aligned}
I &:= - \frac1N \sum_{i=1}^N \int_{\R^d} (u(x_i(t),t) - v_i(t)) \cdot \nabla_x \wt W(x_i(t) - y) \rho(y,t)\,dy\cr
&\quad  + \frac{1}{N^2} \sum_{i \neq j} (u(x_i(t),t) - v_i(t)) \cdot  \nabla_x \wt W(x_i(t) - x_j(t)).
\end{aligned}$$
On the other hand, we find
$$\begin{aligned}
\frac{d}{dt}\mf^N (\mz^N(t) | U(t))
= &\,\frac12\frac{d}{dt}\lt(\frac{1}{N^2} \sum_{i \neq j} \wt W(x_i - x_j) \rt) - \frac{d}{dt}\lt(\frac1N \sum_{i=1}^N \int_{\R^d} \wt W(x_i - y) \rho(y)\,dy \rt)\cr
&+ \frac12\frac{d}{dt}\lt(\int_{\R^d \times \R^d}\wt W(x - y)\rho(x)\rho(y)\,dxdy \rt) \cr
= &\, \frac{1}{N^2}  \sum_{i \neq j} \nabla_x \wt W(x_i - x_j) \cdot v_i - \frac1N \sum_{i=1}^N \int_{\R^d}\nabla_x \wt W(x_i - y) \cdot v_i \rho(y)\,dy\cr
& - \frac1N\sum_{i=1}^N \int_{\R^d}\nabla_x \wt W(x_i - y) \cdot (\rho u)(y)\,dy + \int_{\R^d \times \R^d} \nabla_x \wt W(x-y) (\rho u)(x) \rho(y)\,dxdy.
\end{aligned}$$
Here we used 
\bq\label{asym}
\nabla_x W(-x) = -\nabla_x W(x) \quad \mbox{for} \quad x \in \R^d \setminus \{0\}.
\eq This implies
$$\begin{aligned}
I &:= - \frac12\frac{d}{dt} \int_{\R^d \times \R^d \setminus \Delta} \wt W(x-y)(\rho^N - \rho)(x)(\rho^N - \rho)(y)\,dxdy\cr
&\quad + \frac1{N^2} \sum_{i\neq j} u(x_i) \cdot \nabla_x \wt W(x_i - x_j) - \frac1N\sum_{i=1}^N \int_{\R^d} \nabla_x \wt W(x_i -y) \cdot (u(x_i) - u(y)) \rho(y)\,dy\cr
&\quad + \int_{\R^d \times \R^d} \nabla_x \wt W(x-y) (\rho u)(x) \rho(y)\,dxdy.
\end{aligned}$$
We next use \eqref{asym} to get 
\[
\frac1{N^2} \sum_{i\neq j} u(x_i) \cdot \nabla_x \wt W(x_i - x_j) = \frac12\frac1{N^2} \sum_{i\neq j} \lt(u(x_i) - u(x_j)\rt) \cdot \nabla_x \wt W(x_i - x_j)
\]
and
\[
\int_{\R^d \times \R^d} \nabla_x \wt W(x-y) (\rho u)(x) \rho(y)\,dxdy = \frac12\int_{\R^d \times \R^d} \nabla_x \wt W(x-y) \lt( u(x) - u(y)\rt) \rho(x) \rho(y)\,dxdy.
\]
Thus we obtain
$$\begin{aligned}
I &:= - \frac12\frac{d}{dt} \int_{\R^d \times \R^d \setminus \Delta} \wt W(x-y)(\rho^N - \rho)(x)(\rho^N - \rho)(y)\,dxdy\cr
&\quad\,\, +  \frac12\int_{\R^d \times \R^d \setminus \Delta} \lt(u(x) - u(y) \rt) \cdot \nabla_x \wt W (x-y) (\rho^N - \rho)(x) (\rho^N - \rho)(y)\,dxdy.
\end{aligned}$$
This together with the estimates in Proposition \ref{lem_mke} yields
$$\begin{aligned}
\frac{d}{dt}\lt( \me^N (\mz^N(t) | U(t)) \rt. \!\!&\lt.+\mf^N (\mz^N(t) | U(t))\rt)  + 2\gamma \me^N (\mz^N(t) | U(t)) + \frac1N \sum_{i=1}^N \psi(x_i - y_i)|v_i - u(x_i)|^2\cr
&\leq C\me^N (\mz^N(t) | U(t))  + C d_{BL}^2(\rho^N,\rho)\cr
& \quad + \frac12\int_{\R^d \times \R^d \setminus \Delta} \lt(u(x) - u(y) \rt) \cdot \nabla_x \wt W (x-y) (\rho^N - \rho)(x) (\rho^N - \rho)(y)\,dxdy.
\end{aligned}$$
We then apply \cite[Proposition 1.1]{Spre} to have that the last term on the right hand side of the above inequality can be bounded from above by
\[
C\mf^N (\mz^N(t) | U(t)) + CN^{\beta-2}
\]
for some $\beta < 2$, where $C>0$ is independent of $N$. Applying the Gr\"onwall's lemma to the resulting inequality concludes the desired result.
\end{proof}


\section{Combined Small inertia \& mean field limits: from Newton to Aggregation}

\subsection{Proof of Theorem \ref{thm_main2}}
We first start with the case of smooth interaction potentials as in previous section and apply a similar strategy to the proof of Proposition \ref{lem_mke} to the system \eqref{main_fluid22}. Then we get
\[
\frac{d}{dt} \me^N (\mz^N(t) | \bar U(t)) =: \frac{1}{\e_N}\lt(\sum_{i=1}^4 \bar I_i \rt) + \bar I_5, 
\]
where $\bar I_i, i=1,2,3,4$ are the terms $I_i,i=1,2,3,4$ in \eqref{est_mod} with replacing $(\rho,u)$ by $(\bar \rho, \bar u)$, and $\bar I_5$ is given by
\[
\bar I_5 := \frac1N \sum_{i=1}^N (\bar u(x_i) - v_i) \cdot \bar e.
\]
This can be simply estimated as
$$\begin{aligned}
|\bar I_5| \leq \|\bar e\|_{L^\infty}\frac1N \sum_{i=1}^N |\bar u(x_i) - v_i| 
\leq \frac{C}{\e_N}\frac1N\sum_{i=1}^N |\bar u(x_i) - v_i|^2 + C\e_N
\leq  \frac{C}{\e_N}\me^N (\mz^N(t) | \bar U(t)) + C\e_N.
\end{aligned}$$
where $C>0$ depends only $\|\bar e\|_{L^\infty}$, independent of $N$ and $\e_N$. For the rest, we employ almost the same arguments as before to have
$$\begin{aligned}
\frac{1}{\e_N}\lt( \sum_{i=1}^4 \bar I_i  \rt)
&\leq  - \frac{2\gamma}{\e_N} \me^N (\mz^N(t) | \bar U(t))- \frac1{\e_N N} \sum_{i=1}^N \psi(x_i - y_i)|v_i - \bar u(x_i)|^2\cr
&\quad  + \frac C{\e_N}\me^N (\mz^N(t) | \bar U(t)) + C d_{BL}^2(\rho^N_t(\cdot), \rho(\cdot,t))  ,
\end{aligned}$$
where $C>0$ is independent of $N$, $\e_N$, and $\gamma > 0$. This yields
\bq\label{est_si}
\frac{d}{dt} \me^N (\mz^N(t) | \bar U(t)) + \frac{2\gamma - C}{\e_N} \me^N (\mz^N(t) | \bar U(t))  \leq \frac{C}{\e_N}d_{BL}^2(\rho^N_t(\cdot), \rho(\cdot,t))    + C\e_N,
\eq
where $C>0$ is independent of $N$, $\e_N$, and $\gamma > 0$. On the other hand, by Proposition \ref{lem_dbl}, we can bound the first term on the right hand side of the above inequality from above by
\[
\frac{C}{\e_N}d_{BL}^2(\rho^N_0,\bar \rho_0) + \frac{C}{\e_N}\int_0^t \me^N (\mz^N(s) | \bar U(s))\,ds,
\]
where $C>0$ is independent of $N$, $\e_N$, and $\gamma > 0$. This together with integrating \eqref{est_si} in time implies
$$\begin{aligned}
\me^N (\mz^N(t) | \bar U(t)) + \frac{2\gamma - C}{\e_N} \int_0^t \me^N (\mz^N(s) | \bar U(s))\,ds 
& + \frac1{\e_N N} \sum_{i=1}^N \int_0^t \psi(x_i(s) - y_i(s))|v_i(s) - \bar u(x_i(s),s)|^2\,ds\cr
& \leq \me^N (\mz^N_0 | \bar U_0)  + \frac{C}{\e_N}d_{BL}^2(\rho^N_0,\bar \rho_0)  + C\e_N.
\end{aligned}$$
We finally apply Gr\"onwall's lemma to conclude the desired result in Theorem \ref{thm_main2}. 

\subsection{Singular interaction potential cases}
Similarly as before, Theorem \ref{thm_main2} can be also easily extended to the case with Coulomb or Riesz potentials $\wt W$ defined in \eqref{w_a} or \eqref{w_a2}. More specifically, we have the following theorem.

\begin{theorem}\label{thm_main2s} Let $T>0$ and $\mz^N(t) = \{(x_i(t), v_i(t))\}_{i=1}^N$ be a solution to the particle system \eqref{main_par}, and let $(\bar \rho, \bar u)$ be the unique strong solution of the aggregation-type equation \eqref{eq_agg}--\eqref{eq_agg2} with $\wt W$, which is appeared in \eqref{w_a} or \eqref{w_a2}, instead of $W$, in the sense of Definition \ref{def_strong2} up to time $T>0$ with the initial data $\bar \rho_0$. Suppose that the strength of damping $\gamma >0$ is large enough and $(\bar\rho,\bar u)$ satisfies $\bar\rho \in L^\infty(\R^d \times (0,T))$. We further assume that $\bar\rho \in L^\infty(0,T;\mc^\sigma(\R^d))$ for some $\sigma > \alpha-d+1$ in the case $s \geq d-1$. Then there exists $\beta < 2$ such that 
$$\begin{aligned}
&d_{BL}^2(\rho^N_t(\cdot),\bar \rho(\cdot,t))  +  \int_{\R^d \times \R^d \setminus \Delta} \,\wt W(x-y)(\rho^N - \bar\rho)(x)(\rho^N - \bar\rho)(y)\,dxdy \cr
&\quad 
+\int_0^t \int_{\R^d \times \R^d} |v - \bar u(x,s) |^2\mu^N_s(dxdv) \,ds\cr
&\qquad  \leq  Cd_{BL}^2(\rho^N_0,\bar \rho_0) + C \int_{\R^d \times \R^d \setminus \Delta} \wt W(x-y)(\rho^N_0 - \bar\rho_0)(x)(\rho^N_0 - \bar\rho_0)(y)\,dxdy   \cr
&\qquad \quad + C\e_N\int_{\R^d \times \R^d} |v - \bar u_0(x) |^2\mu^N_0(dxdv)  + C\e_N^2 + CN^{\beta-2} \cr
\end{aligned}$$
and
$$\begin{aligned}
&\frac1{\e_N}d^2_{BL}(\rho^N_t(\cdot),\bar \rho(\cdot,t))  +  \frac1{\e_N}\,\int_{\R^d \times \R^d \setminus \Delta} \wt W(x-y)(\rho^N - \bar\rho)(x)(\rho^N - \bar\rho)(y)\,dxdy  \cr
&\quad + \int_{\R^d \times \R^d} |v - \bar u(x,t) |^2\mu^N_t(dxdv)  \cr
&\qquad \leq  \frac{C}{\e_N}d_{BL}^2(\rho^N_0,\bar \rho_0)    + \frac{C}{\e_N} \int_{\R^d \times \R^d \setminus \Delta} \wt W(x-y)(\rho^N_0 - \bar\rho_0)(x)(\rho^N_0 - \bar\rho_0)(y)\,dxdy \cr
&\qquad \quad + C(1 + \e_N)\int_{\R^d \times \R^d} |v - \bar u_0(x) |^2\mu^N_0(dxdv) + C\e_N + C\frac{N^{\beta-2}}{\e_N}
\end{aligned}$$
for all $t \in [0,T]$, where $C>0$ is independent of $\e_N$ and $N$.
In particular if 
\[
\int_{\R^d \times \R^d} |v - \bar u_0(x) |^2\mu^N_0(dxdv)  \leq C\e_N
\]
and
\[
d^2_{BL}(\rho^N_0,\bar \rho_0) +  \int_{\R^d \times \R^d \setminus \Delta} \wt W(x-y)(\rho^N_0 - \bar\rho_0)(x)(\rho^N_0 - \bar\rho_0)(y)\,dxdy \leq C\e_N^2
\]
for some $C>0$ which is independent of $\e_N$, then we have
$$\begin{aligned}
&d_{BL}^2(\rho^N_t(\cdot),\bar \rho(\cdot,t))  + \int_{\R^d \times \R^d \setminus \Delta} \wt W(x-y)(\rho^N - \bar\rho)(x)(\rho^N - \bar\rho)(y)\,dxdy \leq C\e_N^2  + CN^{\beta-2}
\end{aligned}$$
and
\[
\int_{\R^d \times \R^d} |v - \bar u(x,t) |^2\mu^N_t(dxdv) \leq C\e_N + C\frac{N^{\beta-2}}{\e_N}
\]
for all $t \in [0,T]$, where $C>0$ is independent of $\e_N$ and $N$.
\end{theorem}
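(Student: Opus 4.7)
The strategy is to merge the small inertia argument from the proof of Theorem~\ref{thm_main2} with the modulated free energy correction developed to prove Theorem~\ref{thm_main12}. The plan is to introduce the combined Lyapunov functional
\[
G^N(t) := \me^N(\mz^N(t) \mid \bar U(t)) + \frac{1}{\e_N}\mf^N(\mz^N(t) \mid \bar U(t)),
\]
where the $\frac{1}{\e_N}$ weight on the modulated free energy is forced by the $\frac{1}{\e_N}$-scaling of the singular potential contribution $\bar I_3$ appearing in $\frac{d}{dt}\me^N$ for the rescaled hydrodynamic system \eqref{main_fluid22}. The key structural observation is that the algebraic identity $\bar I_3 = -\frac{d}{dt}\mf^N + (\text{commutator})$ used in Theorem~\ref{thm_main12} relies only on $\dot x_i = v_i$ and on the continuity equation $\pa_t \bar\rho + \nabla_x\cdot(\bar\rho\bar u)=0$, both of which are unaffected by the small inertia rescaling present in \eqref{main_par} and \eqref{main_fluid22}.

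First, I would follow the proof of Theorem~\ref{thm_main2} to obtain $\frac{d}{dt}\me^N = \frac{1}{\e_N}\sum_{i=1}^4 \bar I_i + \bar I_5$, keeping the routine bounds $|\bar I_1| \le C\me^N$, $\bar I_2 = -2\gamma \me^N$, $|\bar I_4| \le C\me^N + Cd_{BL}^2(\rho^N,\bar\rho)$, and $|\bar I_5| \le \frac{C}{\e_N}\me^N + C\e_N$ (the last via Young's inequality together with $\|\bar e\|_{L^\infty}<\infty$). For the singular term $\bar I_3$, rather than invoking $\nabla_x W \in \W^{1,\infty}$ as in the smooth case, I would replicate the proof of Theorem~\ref{thm_main12}: compute $\frac{d}{dt}\mf^N$ using $\dot x_i = v_i$ and the unscaled continuity equation, so that $\frac{1}{\e_N}\bar I_3 + \frac{1}{\e_N}\frac{d}{dt}\mf^N$ collapses to $\frac{1}{\e_N}$ times the commutator-type quantity
\[
\frac{1}{2}\int_{\R^d\times\R^d\setminus\Delta} (\bar u(x)-\bar u(y))\cdot \nabla_x \wt W(x-y)(\rho^N-\bar\rho)(x)(\rho^N-\bar\rho)(y)\,dx\,dy.
\]
By \cite[Proposition 1.1]{Spre}, this commutator is bounded by $C\mf^N + CN^{\beta-2}$ for some $\beta<2$, using $\bar u \in \W^{1,\infty}$ together with the regularity assumptions on $\bar\rho$. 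Assembling all contributions yields the differential inequality
\[
\frac{d}{dt} G^N + \frac{2\gamma - C}{\e_N}\me^N \le \frac{C}{\e_N}\mf^N + \frac{C}{\e_N}d_{BL}^2(\rho^N,\bar\rho) + \frac{C}{\e_N} N^{\beta-2} + C\e_N.
\]

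To close the loop, I would invoke Proposition~\ref{lem_dbl} to replace $d_{BL}^2(\rho^N_t,\bar\rho_t)$ by $Cd_{BL}^2(\rho^N_0,\bar\rho_0) + C\int_0^t \me^N(s)\,ds$, integrate the above inequality in time, and exploit that for $\gamma$ sufficiently large the dissipation $\frac{2\gamma-C}{\e_N}\int_0^t \me^N\,ds$ absorbs the $\frac{C}{\e_N}\int_0^t \me^N\,ds$ coming from Proposition~\ref{lem_dbl}. Together with the near-positivity of $\mf^N$ modulo the $N^{\beta-2}$ renormalization supplied by \cite[Proposition 1.1]{Spre}, one has $\frac{C}{\e_N}\mf^N \le CG^N + \frac{C}{\e_N}N^{\beta-2}$, so Gronwall's lemma applied to $G^N$ produces both the time-integrated and pointwise bounds exactly as the two corresponding estimates of Theorem~\ref{thm_main2} were derived from their differential inequality. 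The main obstacle is the commutator estimate itself, for which \cite[Proposition 1.1]{Spre} provides the delicate machinery: truncation near the diagonal to cure the singularity of $\wt W(0)$, and a subtle argument in the supercritical Riesz regime $s\ge d-1$ requiring $\bar\rho \in \mc^\sigma$. A secondary subtlety is the scaling choice of $G^N$: the weight $\frac{1}{\e_N}$ on $\mf^N$ is exactly the one that renders the Gronwall argument self-contained and that reproduces the correct $\e_N$-dependence displayed in the two final estimates of the theorem.
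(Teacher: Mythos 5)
Your proposal is correct and follows essentially the route the paper intends: the paper omits an explicit proof of Theorem \ref{thm_main2s}, asserting it follows "similarly as before," and your argument supplies exactly that combination — the differential identity for $\frac{d}{dt}\me^N$ from the proof of Theorem \ref{thm_main2}, the replacement of the $\bar I_3$ estimate by the modulated free energy identity plus the commutator bound of \cite[Proposition 1.1]{Spre} as in Theorem \ref{thm_main12}, and Gr\"onwall applied to $\me^N+\frac{1}{\e_N}\mf^N$ together with Proposition \ref{lem_dbl} and the large-$\gamma$ absorption. Your observations that the free-energy identity uses only $\dot x_i=v_i$ and the unscaled continuity equation, and that the $1/\e_N$ weight on $\mf^N$ is forced by the scaling of $\bar I_3$ (reproducing the $N^{\beta-2}/\e_N$ terms in the stated bounds), are precisely the points that make the extension go through.
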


%
%
%
%
\section{Local Cauchy problem for pressureless Euler equations with nonlocal forces}

In order to make the analysis for the mean-field limit from the particle system \eqref{main_par} to the pressureless Euler-type equations \eqref{main_fluid} fully rigorous, we need to have the existence of solutions for both systems. As mentioned in Introduction, we postpone the existence theory for the particle system \eqref{main_par} in Appendix \ref{app_par}, and here we provide local-in-time existence and uniqueness of classical solutions for the system \eqref{main_fluid}. For the reader's convenience, let us recall our limiting system:
\begin{align}\label{main_fluid2}
\begin{aligned}
&\pa_t \rho + \nabla_x \cdot (\rho u) = 0, \quad (x,t) \in \R^d \times \R_+,\cr
&\pa_t (\rho u) + \nabla_x \cdot (\rho u \otimes u) = -\rho u - \rho \nabla_x V - \rho \nabla_x W \star\rho \cr
&\hspace{4cm} + \rho \int_{\R^d} \psi(x-y) (u(y) - u(x))\,\rho(y)\,dy,
\end{aligned}
\end{align}
with the initial data:
\[
(\rho(x,t),u(x,t))|_{t=0} =: (\rho_0(x), u_0(x)), \quad x \in \R^d.
\]
Here we set the coefficient of linear damping $\gamma=1$. 

We first introduce the exact notion of strong solution to the system \eqref{main_fluid2} that we will deal with.
\begin{definition}\label{def_strong3} Let $s > d/2+1$. For given $T\in(0,\infty)$, the pair $(\rho,u)$ is a strong solution of \eqref{main_fluid2} on the time interval $[0,T]$ if and only if the following conditions are satisfied:
\begin{itemize}
\item[(i)] $\rho \in \mc([0,T];H^s(\R^d))$, $u \in \mc([0,T];Lip(\R^d)\cap L^2_{loc}(\R^d))$, and $\nabla_x^2 u \in \mc([0,T];H^{s-1}(\R^d))$,
\item[(ii)] $(\rho, u)$ satisfy the system \eqref{main_fluid2} in the sense of distributions.
\end{itemize}
\end{definition}
Notice that due to the choice of $s$ in the previous definition, these strong solutions are also classical solutions to \eqref{main_fluid2} in the sense of Definition \ref{def_strong}. Our main result of this section is the following local Cauchy problem for the system \eqref{main_fluid2}.

\begin{theorem}\label{thm_local}Let $s > d/2+1$ and $R>0$. Suppose that the confinement potential $V$ is given by $V = |x|^2/2$, the interaction potential $\nabla_x W \in (\W^{1,1} \cap \W^{1,\infty})(\R^d)$, and the communication weight function $\psi$ satisfies 
\bq\label{as_psi}
\psi \in \mc_c^1(\R^d) \quad \mbox{and} \quad supp(\psi) \subseteq B(0,R),
\eq
where $B(0,R) \subset \R^d$ denotes a ball of radius $R$ centered at the origin. For any $N<M$, there is a positive constant $T^*$ depending only on $R$, $N$, and $M$ such that if $\rho_0 > 0$ on $\R^d$ and 
\[
\|\rho_0\|_{H^s} + \|u_0\|_{L^2(B(0,R))}+\|\nabla_x u_0\|_{L^\infty} +  \|\nabla_x^2 u_0\|_{H^{s-1}} < N,
\]
then the Cauchy problem \eqref{main_fluid2} has a unique strong solution $(\rho,u)$, in the sense of Definition \ref{def_strong3}, satisfying
\[
\sup_{0 \leq t \leq T^*}\lt(\|\rho(\cdot,t)\|_{H^s} + \|u(\cdot,t)\|_{L^2(B(0,R))} +\|\nabla_x u(\cdot,t)\|_{L^\infty} + \|\nabla_x^2 u(\cdot,t)\|_{H^{s-1}}\rt) \leq M.
\]
\end{theorem}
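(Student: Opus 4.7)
The plan is to construct local strong solutions by a Picard iteration and close an a priori estimate in the norm dictated by Definition \ref{def_strong3}. Set $(\rho^0,u^0):=(\rho_0,u_0)$ and, given $(\rho^n,u^n)$, define $(\rho^{n+1},u^{n+1})$ as the solution of the linearized system
\begin{align*}
&\pa_t \rho^{n+1} + \nabla_x \cdot (\rho^{n+1} u^n) = 0,\\
&\pa_t u^{n+1} + (u^n \cdot \nabla_x) u^{n+1} = -u^{n+1} - x - \nabla_x W \star \rho^n + \intr \psi(x-y)\lt(u^n(y)-u^{n+1}(x)\rt)\rho^n(y)\,dy
\end{align*}
with initial data $(\rho_0,u_0)$. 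Each step is tractable: the continuity equation is solved along the smooth flow $\eta^n_t$ of $u^n$, preserving positivity through the representation $\rho^{n+1}(\eta^n_t(x),t) = \rho_0(x)\exp(-\int_0^t (\nabla_x\cdot u^n)(\eta^n_s(x),s)\,ds)$, while the velocity equation is a linear transport--damping equation with a source term that is bounded on every fixed ball.

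The main task is to obtain a uniform bound on
$$ Y^{n+1}(t) := \|\rho^{n+1}(t)\|_{H^s} + \|u^{n+1}(t)\|_{L^2(B(0,R))} + \|\nabla_x u^{n+1}(t)\|_{L^\infty} + \|\nabla_x^2 u^{n+1}(t)\|_{H^{s-1}} $$
on a short interval $[0,T^*]$, with $T^*$ depending only on $N$ and $M$. The $H^s$ bound on $\rho^{n+1}$ follows from the standard Moser/commutator estimate for transport equations, with rate controlled by $\|\nabla_x u^n\|_{L^\infty}+\|\nabla_x^2 u^n\|_{H^{s-1}}$. The $L^2(B(0,R))$ bound on $u^{n+1}$ is obtained by testing the velocity equation against $u^{n+1}\mathbf{1}_{B(0,R)}$: the transport term produces a boundary flux on $\partial B(0,R)$ that is absorbed into $\|u^n\|_{L^\infty(B(0,R))} \leq |u^n(0)|+R\|\nabla_x u^n\|_{L^\infty}$, the source $-x-\nabla_x W\star\rho^n$ plus the alignment integral is controlled on $B(0,R)$ by $R$, $\|\nabla_x W\|_{\W^{1,\infty}}$, $\|\psi\|_{L^\infty}$ and $\|\nabla_x u^n\|_{L^\infty}$, while the damping $-u^{n+1}$ supplies coercivity. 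The Lipschitz bound $\|\nabla_x u^{n+1}\|_{L^\infty}$ is obtained by differentiating the velocity equation and running Gr\"onwall along the characteristics of $u^n$, using $\|\psi\|_{\W^{1,\infty}}$ and $\|\rho^n\|_{L^1}$ to control derivatives of the alignment integral. Finally, $\|\nabla_x^2 u^{n+1}\|_{H^{s-1}}$ follows from a standard energy estimate on the twice-differentiated equation, with commutators $[\nabla_x^2, u^n\cdot\nabla_x]u^{n+1}$ handled by Kato--Ponce-type inequalities and bounded in terms of the lower-order quantities already controlled. Combining the four estimates gives $Y^{n+1}(t) \leq M$ on $[0,T^*]$ whenever $Y^n\leq M$ on the same interval.

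Convergence is then established by deriving stability estimates for the differences $(\rho^{n+1}-\rho^n,u^{n+1}-u^n)$ in the lower-regularity norm $L^2\times L^2(B(0,R))$, which close to a contraction on a possibly smaller time interval by virtue of the uniform higher-regularity bounds. The limit $(\rho,u)$ inherits the bound on $Y$ by weak-$*$ lower semicontinuity, and continuity in time is recovered by a standard interpolation argument between strong convergence in the weaker norm and uniform bounds in the stronger one; uniqueness follows at once from the same stability estimate applied to two solutions sharing the same initial data. The principal obstacle throughout is the linear growth of $\nabla_x V = x$, which forbids placing $u$ in $L^2(\R^d)$; what rescues the scheme is the compact support of $\psi$, since the alignment integral only samples $u$ within distance $R$ of $x$ and therefore contributes via $|u(y)-u(x)|\leq R\|\nabla_x u\|_{L^\infty}$ without ever requiring global integrability of $u$. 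The localized norm $\|u\|_{L^2(B(0,R))}$ pins down the zeroth-order content of $u$ locally, and the Lipschitz bound propagates it to any bounded set through $|u(x)| \leq |u(0)| + |x|\,\|\nabla_x u\|_{L^\infty}$, closing the loop.
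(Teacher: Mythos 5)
Your proposal is correct and follows essentially the same route as the paper: a linearized Picard iteration, uniform short-time bounds in exactly the norm $\|\rho\|_{H^s}+\|u\|_{L^2(B(0,R))}+\|\nabla_x u\|_{L^\infty}+\|\nabla_x^2 u\|_{H^{s-1}}$ exploiting the compact support of $\psi$ and the locality of $\nabla_x V=x$, then a contraction estimate for the differences in the lower norm $L^2\times H^1_{loc}$, with uniqueness from the same stability estimate. The only cosmetic difference is in the $L^2(B(0,R))$ step: the paper bounds $\int_{B(0,R)} u^{n+1}\cdot(u^n\cdot\nabla_x)u^{n+1}\,dx$ directly by $\|\nabla_x u^{n+1}\|_{L^\infty}\|u^n\|_{L^2(B(0,R))}\|u^{n+1}\|_{L^2(B(0,R))}$ without integrating by parts, whereas your boundary-flux term carries the trace of $|u^{n+1}|^2$ on $\partial B(0,R)$, which is not the local $L^2$ norm itself and must be controlled through the local $L^\infty$ bound furnished by $\|u^{n+1}\|_{L^2(B(0,R))}+\|\nabla_x u^{n+1}\|_{L^\infty}$ — a fixable but avoidable extra step.
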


\begin{remark}The assumption on the communication weight function \eqref{as_psi} implies
$
\psi \in \W^{1,p}(\R^d)
$
for any $p \in [1,\infty]$.
\end{remark}

\begin{remark} The $L^2$-norm of $u$ on the ball is introduced due to the confinement potential $V$. In fact, if we ignore the confinement potential $V$ in the momentum equation in \eqref{main_fluid2}, then under the following assumption on the initial data
\[
\|\rho_0\|_{H^s} + \|u_0\|_{H^{s+1}} < N,
\]
we  have the unique strong solution $(\rho,u)$ to the system \eqref{main_fluid2} satisfying 
\[
\sup_{0 \leq t \leq T^*}\lt(\|\rho(\cdot,t)\|_{H^s} + \|u(\cdot,t)\|_{H^{s+1}}\rt) \leq M.
\]
\end{remark}

\subsection{Linearized system}
In this part, we construct approximate solutions $(\rho^n, u^n)$ for the system \eqref{main_fluid2} and provide some uniform bound estimates of it.

Let us first take the initial data as the zeroth approximation:
 \[
(\rho^0(x,t),u^0(x,t)) = (\rho_0(x),u_0(x)), \quad (x,t) \in \R^d \times \R_+.
\]
We next suppose that the $n$th approximation $(\rho^n, u^n)$ with $n \geq 1$ is given. Then we define the $(n+1)$th approximation $(\rho^{n+1}, u^{n+1})$  as a solution to the following linear system. 
\begin{align}\label{main_lin}
\begin{aligned}
&\pa_t \rho^{n+1} + u^n \cdot \nabla \rho^{n+1} + \rho^{n+1} \nabla \cdot u^n = 0, \quad (x,t) \in \R^d \times \R_+,\cr
&\rho^{n+1}\pa_t u^{n+1} + \rho^{n+1} u^n \cdot \nabla u^{n+1} = - \rho^{n+1}u^{n+1} - \rho^{n+1}(\nabla_x V + \nabla_x W \star \rho^{n+1})\cr
&\hspace{5cm} + \rho^{n +1} \intr \psi(x-y) (u^n(y) - u^n(x)) \rho^{n+1}(y)\,dy,
\end{aligned}
\end{align}
with the initial data 
\[
(\rho^n(x,0),u^n(x,0))=(\rho_0(x),u_0(x)) \quad \mbox{for all} \quad n \geq 1, \quad x\in \R^d.
\]
Let us introduce a solution space $\mathcal{Y}_{s,R}(T)$ with $s > d/2+1$ as
$$\begin{aligned}
\mathcal{Y}_{s,R}(T) &:= \Big\{ (\rho,u) : \rho \in \mc([0,T];H^s(\R^d)), u \in \mc([0,T];L^2(B(0,R))) \cap \mc([0,T];\dot{\W}^{1,\infty}(\R^d)), \cr
&\hspace{4cm} \nabla_x^2 u \in \mc([0,T];H^{s-1}(\R^d)) \Big\}.
\end{aligned}$$
Then by the standard linear solvability theory \cite{K73}, for any $T>0$ we have that the approximation $\{(\rho^n,u^n)\}_{n=0}^\infty \subset \mathcal{Y}_{s,R}(T)$ is well-defined. 

For notational simplicity, in the rest of this section, we drop $x$-dependence of the differential operator $\nabla_x$.

\begin{proposition}Suppose that the initial data $(\rho_0, u_0)$ satisfies $\rho_0 > 0$ on $\R^d$ and 
\[
\|\rho_0\|_{H^s} + \|u_0\|_{L^2(B(0,R))} + \|\nabla u_0\|_{L^\infty} +  \|\nabla^2 u_0\|_{H^{s-1}} < N,
\]
and let $(\rho^n,u^n)$ be a sequence of the approximate solutions of \eqref{main_lin} with the initial data $(\rho_0, u_0)$. Then for any $N < M$, there exists $T^* > 0$ such that 
\[
\sup_{n \geq 0} \sup_{0 \leq t \leq T^*}\lt(\|\rho^n(\cdot,t)\|_{H^s} + \|u^n(\cdot,t)\|_{L^2(B(0,R))} + \|\nabla u^n(\cdot,t)\|_{L^\infty} + \|\nabla^2 u^n(\cdot,t)\|_{H^{s-1}}\rt) \leq M.
\]
\end{proposition}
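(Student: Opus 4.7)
The plan is to proceed by induction on $n$. The base case $n=0$ is immediate since $(\rho^0,u^0)\equiv(\rho_0,u_0)$ and $N<M$. For the inductive step, I assume the desired bound with constant $M$ holds for $(\rho^n,u^n)$ on $[0,T^*]$, and derive the same bound for $(\rho^{n+1},u^{n+1})$ by choosing $T^*$ depending only on $N,M,R$ (and the fixed data $\nabla W$, $\psi$, $V$). For the density, the continuity equation is a linear transport equation with smooth drift $u^n$. Applying $\nabla^\alpha$ for $|\alpha|\leq s$ together with standard Kato--Ponce/Moser commutator estimates yields
\[
\frac{d}{dt}\|\rho^{n+1}\|_{H^s} \leq C\bigl(\|\nabla u^n\|_{L^\infty}+\|\nabla^2 u^n\|_{H^{s-1}}\bigr)\|\rho^{n+1}\|_{H^s} \leq C(M)\|\rho^{n+1}\|_{H^s},
\]
so $\|\rho^{n+1}(\cdot,t)\|_{H^s}\leq Ne^{C(M)t}$. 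Positivity of $\rho^{n+1}$ follows from the mild form $\rho^{n+1}(\eta^n(x,t),t)=\rho_0(x)\exp(-\int_0^t(\nabla\cdot u^n)(\eta^n(x,s),s)\,ds)>0$, where $\eta^n$ is the flow of $u^n$.

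For the velocity, I divide the momentum equation by $\rho^{n+1}>0$ to obtain
\[
\pa_t u^{n+1}+u^n\cdot\nabla u^{n+1}=-u^{n+1}-\nabla V-\nabla W\star\rho^{n+1}+F^n(x,t),
\]
with $F^n(x,t):=\intr\psi(x-y)(u^n(y)-u^n(x))\rho^{n+1}(y)\,dy$. A key observation is that the subtraction structure of $F^n$, combined with $\mathrm{supp}\,\psi\subset B(0,R)$, gives $\|F^n\|_{L^\infty}\leq R\|\psi\|_{L^\infty}\|\nabla u^n\|_{L^\infty}\|\rho^{n+1}\|_{L^1}$, so that the possibly linearly growing $\|u^n\|_{L^\infty}$ never appears; analogous estimates for higher derivatives of $F^n$ distribute derivatives onto $\psi$ (compactly supported, smooth) or $\rho^{n+1}$ (controlled in $H^s$), or exploit the subtraction to absorb one derivative into $u^n$. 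The $H^{s-1}$ estimate of $\nabla^2 u^{n+1}$ is obtained by applying $\nabla^\alpha$ for $2\leq|\alpha|\leq s$ to the velocity equation, pairing with $\nabla^\alpha u^{n+1}$, and using the commutator estimate for the transport term $[\nabla^\alpha,u^n\cdot\nabla]u^{n+1}$; note that $\nabla^\alpha V=0$ for $|\alpha|\geq 2$ since $V=|x|^2/2$, and $\|\nabla^\alpha(\nabla W\star\rho^{n+1})\|_{L^2}\leq\|\nabla W\|_{\W^{1,1}\cap\W^{1,\infty}}\|\rho^{n+1}\|_{H^{s-1}}$. The $L^\infty$ bound on $\nabla u^{n+1}$ follows by differentiating the equation once, integrating along the characteristics of $u^n$, and applying Gr\"onwall to the resulting ODE inequality
\[
\|\nabla u^{n+1}(\cdot,t)\|_{L^\infty}\leq e^{C(M)t}\bigl(\|\nabla u_0\|_{L^\infty}+tC(M,R)\bigr).
\]

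The main obstacle lies in the $L^2(B(0,R))$ estimate for $u^{n+1}$: the natural pairing against $u^{n+1}\mathbf{1}_{B(0,R)}$ produces a boundary term on $\pa B(0,R)$ from the transport term $u^n\cdot\nabla u^{n+1}$, which is not directly controlled by $\|u^{n+1}\|_{L^2(B(0,R))}$. To circumvent this I would introduce a cut-off $\chi\in C_c^\infty(\R^d)$ with $\chi\equiv 1$ on $B(0,R)$ and $\mathrm{supp}\,\chi\subset B(0,2R)$, and estimate $\int\chi^2|u^{n+1}|^2$. The commutator from the transport term produces $\int\chi(u^n\cdot\nabla\chi)|u^{n+1}|^2$, which is bounded by $\|u^n\|_{L^\infty(B(0,2R))}\|\nabla\chi\|_{L^\infty}\|u^{n+1}\|_{L^2(B(0,2R))}^2$. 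Crucially, using the Lipschitz bound $\|\nabla u^n\|_{L^\infty}\leq M$ together with $\|u^n\|_{L^2(B(0,R))}\leq M$, one has $\|u^n\|_{L^\infty(B(0,2R))}\leq C(M,R)$ by a mean-value/average argument, and likewise $\|u^{n+1}\|_{L^2(B(0,2R))}$ can be controlled via the Lipschitz bound on $u^{n+1}$ (already established) plus $\|u^{n+1}\|_{L^2(B(0,R))}$. The remaining source terms $\nabla V$ and $\nabla W\star\rho^{n+1}$ restricted to $B(0,2R)$ are bounded by constants depending on $R$ and $M$, while the alignment term $F^n$ is bounded as above. A Gr\"onwall argument then closes the $L^2(B(0,R))$ bound at $M$ for small enough $T^*$. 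Choosing $T^*=T^*(N,M,R)$ so that all of the above Gr\"onwall-type bounds stay below $M$ simultaneously completes the inductive step.
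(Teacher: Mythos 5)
Your proposal is correct in outline and follows the same induction-plus-energy-estimate scheme as the paper (characteristics for positivity, Moser commutators for $\|\rho^{n+1}\|_{H^s}$, dividing by $\rho^{n+1}>0$, integrating the differentiated equation along the flow of $u^n$ for $\|\nabla u^{n+1}\|_{L^\infty}$, and exploiting the subtraction structure of the alignment term with $supp(\psi)\subseteq B(0,R)$ so that $\|u^n\|_{L^\infty}$ never enters). The one place you genuinely diverge is the $L^2(B(0,R))$ estimate, and the ``main obstacle'' you identify there is not actually present: the paper never integrates by parts in the transport term, so no boundary term on $\pa B(0,R)$ arises. It simply bounds $\lt|\intb u^{n+1}\cdot(u^n\cdot\nabla u^{n+1})\,dx\rt| \leq \|\nabla u^{n+1}\|_{L^\infty}\,\|u^n\|_{L^2(B(0,R))}\,\|u^{n+1}\|_{L^2(B(0,R))}$, which is admissible because the $L^\infty$ bound on $\nabla u^{n+1}$ has already been closed at that point of Step B; Cauchy--Schwarz on the ball and Gr\"onwall then finish the local $L^2$ bound with no cut-off. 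Your cut-off argument with $\chi$ supported in $B(0,2R)$ can be made to work (the $L^\infty$ bound of $u^n$ on $B(0,2R)$ does follow from $\|\nabla u^n\|_{L^\infty}\leq M$ plus $\|u^n\|_{L^2(B(0,R))}\leq M$), but it is strictly more complicated than needed; the paper's direct estimate is the cleaner route and is exactly what the availability of the Lipschitz bound buys you. Two smaller points to fix in your write-up: the top-order estimate must run over derivatives of order $2\leq k\leq s+1$, not $s$, since the claimed norm is $\|\nabla^2 u^{n+1}\|_{H^{s-1}}$ (your convolution bound $\|\nabla^2 W\star\nabla^{k-1}\rho^{n+1}\|_{L^2}\leq\|\nabla^2 W\|_{L^1}\|\nabla^{k-1}\rho^{n+1}\|_{L^2}$ still closes at $k=s+1$); and since $\psi$ is only assumed $\mc^1_c$, you cannot let constants depend on higher derivatives of $\psi$ -- the paper handles the term $\nabla_x^k\psi$ by rewriting it as $y$-derivatives and integrating by parts so that only $\|\psi\|_{W^{1,1}}$ and $\|\psi\|_{W^{1,2}}$ appear, with all remaining derivatives landing on $\rho^{n+1}$ (at most order $s$) or on $u^n$; your sketch gestures at this but should be made explicit so the bounds are uniform under the mollification of $\psi$.
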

\begin{proof}For the proof, we use the inductive argument. Since we take the initial data for the first iteration step, it is clear to find
$$\begin{aligned}
&\sup_{0 \leq t \leq T}\lt(\|\rho^0(\cdot,t)\|_{H^s} + \|u^0(\cdot,t)\|_{L^2(B(0,R))} + \|\nabla u^0(\cdot,t)\|_{L^\infty} + \|\nabla^2 u^0(\cdot,t)\|_{H^{s-1}}\rt)\cr
&\quad = \|\rho_0\|_{H^s} + \|u_0\|_{L^2(B(0,R))} + \|\nabla u_0\|_{L^\infty} +  \|\nabla^2 u_0\|_{H^{s-1}}  < N< M.
\end{aligned}$$
We now suppose that 
\[
\sup_{0 \leq t \leq T_0}\lt(\|\rho^n(\cdot,t)\|_{H^s} + \|u^n(\cdot,t)\|_{L^2(B(0,R))} + \|\nabla u^n(\cdot,t)\|_{L^\infty} + \|\nabla^2 u^n(\cdot,t)\|_{H^{s-1}}\rt) \leq M
\]
for some $T_0 > 0$. In the rest of the proof, upon mollifying if necessary we may assume that the communication weight function $\psi$ is smooth. Since this proof is a rather lengthy, we divide it into four steps:

\begin{itemize}
\item In {\bf Step A}, we provide the positivity and $H^s(\R^d)$-estimate of $\rho^{n+1}$:
\[
\rho^{n+1}(x,t) > 0 \quad  \forall\, (x,t) \in \R^d \times [0,T] \quad \mbox{and} \quad \|\rho^{n+1}(\cdot,t)\|_{H^s} \leq \|\rho_0\|_{H^s}e^{CMt}
\]
for $t \leq T_0$, where $C>0$ is independent of $n$. \newline

\item In {\bf Step B}, we show $\dot \W^{1,\infty}(\R^d)$-estimate and $L^2(B(0,R))$-estimate of $u^{n+1}$:
$$\begin{aligned}
&\|\nabla u^{n+1}(\cdot,t)\|_{L^\infty} + \|u^{n+1}(\cdot,t)\|_{L^2(B(0,R))} \leq \|\nabla u_0\|_{L^\infty}e^{(CM - 1)t} + \|u_0\|_{L^2(B(0,R))} + E(t)
\end{aligned}$$
for $t \leq T_0$, where $C>0$ is independent of $n$, and $E: [0,T_0] \to [0,\infty)$ is continuous on $[0,T_0]$ satisfying $E(t) \to 0$ as $t \to 0^+$.  \newline

\item In {\bf Step C}, we estimate the higher order derivative of $u^{n+1}$:
\[
\|\nabla^2 u^{n+1}\|_{H^{s-1}} \leq \|\nabla^2 u_0\|_{H^{s-1}} e^{CMt} + E(t)
\]
for $t \leq T_0$, where $C>0$ is independent of $n$, and $E$ satisfies the same property as in {\bf Step B}. \newline

\item In {\bf Step D}, we finally combine all of the estimates in {\bf Steps A, B, \& C} to conclude our desired result. \newline
\end{itemize}

{\bf Step A.-} We first show the positivity of $\rho^{n+1}$. Consider the following characteristic flow $\eta^{n+1}$ associated to the fluid velocity $u^n$ by
\bq\label{char_eq}
\pa_t \eta^{n+1}(x,t) = u^n(\eta^{n+1}(x,t),t) \quad \mbox{for} \quad t > 0
\eq
with the initial data 
$
\eta^{n+1}(x,0) =x \in \R^d.
$
Since $u^n$ is globally Lipschitz, the characteristic equations \eqref{char_eq} are well-defined. Then by using the method of characteristics, we obtain 
\[
\pa_t \rho^{n+1}(\eta^{n+1}(x,t),t) =  - \rho^{n+1}(\eta^{n+1}(x,t),t) (\nabla \cdot u)(\eta^{n+1}(x,t),t),
\]
and applying Gr\"onwall's lemma yields
\[
\rho^{n+1}(\eta^{n+1}(x,t),t) = \rho_0(x) \exp\lt(-\int_0^t (\nabla \cdot u)(\eta^{n+1}(x,\tau),\tau)\,d\tau \rt) \geq \rho_0(x) e^{-MT_0} > 0.
\]
We next estimate $H^s$-norm of $\rho^{n+1}$. We first easily find 
$$\begin{aligned}
\frac{d}{dt}\|\rho^{n+1}\|_{L^2}^2 &\leq C\|\nabla u^n\|_{L^\infty}\|\rho^{n+1}\|_{L^2}^2\leq CM\|\rho^{n+1}\|_{L^2}^2,\cr
\frac{d}{dt}\|\nabla \rho^{n+1}\|_{L^2}^2 &\leq C\|\nabla u^n\|_{L^\infty}\|\nabla \rho^{n+1}\|_{L^2}^2 + C\|\nabla^2 u^n\|_{L^2}\|\rho^{n+1}\|_{L^\infty}\|\nabla \rho^{n+1}\|_{L^2} \leq CM\|\rho^{n+1}\|_{H^s}\|\nabla \rho^{n+1}\|_{L^2},
\end{aligned}$$
and 
$$\begin{aligned}
&\frac12\frac{d}{dt}\int_{\R^d} |\nabla^k \rho^{n+1}|^2\,dx \cr
&\quad = - \int_{\R^d} \nabla^k \rho^{n+1} \cdot (u^n \cdot \nabla^{k+1} \rho^{n+1})\,dx - \int_{\R^d} \nabla^k \rho^{n+1} \cdot (\nabla^k (\nabla \rho^{n+1} \cdot u^n) - u^n \cdot \nabla^{k+1} \rho^{n+1})\,dx\cr
&\qquad - \int_{\R^d} \nabla \rho^{n+1} \cdot (\nabla^k (\nabla \cdot u^n)) \rho^{n+1}\,dx - \int_{\R^d} \nabla^k \rho^{n+1} \cdot (\nabla^k(\rho^{n+1} \nabla \cdot u^n) -  \rho\nabla^k (\nabla \cdot u^n))\,dx\cr
&\quad =: \sum_{i=1}^4 I_i
\end{aligned}$$
for $2 \leq k \leq s$. Here we use Moser-type inequality to estimate $I_i,i=1,\cdots,4$ as
$$\begin{aligned}
I_1 &\leq \|\nabla u^n\|_{L^\infty}\|\nabla^k \rho^{n+1}\|_{L^2}^2\leq CM\|\nabla^k \rho^{n+1}\|_{L^2}^2,\cr
I_2 &\leq \|\nabla^k (\nabla \rho^{n+1} \cdot u^n) - u^n \cdot \nabla^{k+1} \rho^{n+1}\|_{L^2}\|\nabla^k\rho^{n+1}\|_{L^2}\cr
&\leq C\lt(\|\nabla^k u^n\|_{L^2}\|\nabla \rho^{n+1}\|_{L^\infty} +  \|\nabla u^n\|_{L^\infty}\|\nabla^k\rho^{n+1}\|_{L^2}\rt)\|\nabla^k\rho^{n+1}\|_{L^2}\cr
&\leq CM\|\nabla \rho^{n+1}\|_{H^{s-1}}\|\nabla^k\rho^{n+1}\|_{L^2},\cr
I_3 &\leq \|\rho^{n+1}\|_{L^\infty}\|\nabla^k\rho^{n+1}\|_{L^2}\|\nabla^{k+1} u^n\|_{L^2}\leq CM\|\rho^{n+1}\|_{H^s}\|\nabla^k\rho^{n+1}\|_{L^2},\cr
I_4 &\leq \|\nabla^k(\rho^{n+1} \nabla \cdot u^n) -  \rho^{n+1}\nabla^k (\nabla \cdot u^n)\|_{L^2}\|\nabla^k\rho^{n+1}\|_{L^2}\cr
&\leq C\lt(\|\nabla^k \rho^{n+1}\|_{L^2}\|\nabla u^n\|_{L^\infty} + \|\nabla \rho^{n+1}\|_{L^\infty}\|\nabla^k u^n\|_{L^2} \rt)\|\nabla^k\rho^{n+1}\|_{L^2}\cr
&\leq CM\|\nabla \rho^{n+1}\|_{H^{s-1}}\|\nabla^k\rho^{n+1}\|_{L^2}.
\end{aligned}$$
Combining all of the above estimates entails
\bq\label{est_lrho}
\frac{d}{dt}\|\rho^{n+1}\|_{H^s} \leq CM\|\rho^{n+1}\|_{H^s}, \quad \mbox{i.e.,} \quad  \|\rho^{n+1}(\cdot,t)\|_{H^s} \leq \|\rho_0\|_{H^s}e^{CMt},
\eq
for $t \leq T_0$, where $C > 0$ is independent of $n$. \newline

{\bf Step B.-} Due to the positivity of $\rho^{n+1}$, it follows from the momentum equation in \eqref{main_lin} that $u^{n+1}$ satisfies
\begin{align}\label{mom_lin}
\begin{aligned}
\pa_t u^{n+1} +  u^n \cdot \nabla u^{n+1} &= - u^{n+1} -  \nabla V - \nabla W \star \rho^{n+1}+  \intr \psi(x-y) (u^n(y) - u^n(x)) \rho^{n+1}(y)\,dy.
\end{aligned}
\end{align}
Taking the differential operator $\nabla$ to \eqref{mom_lin} gives
\begin{align}\label{na_u}
\begin{aligned}
\pa_t \nabla u^{n+1} +  u^n \cdot \nabla^2 u^{n+1}  &= - \nabla u^n \,\nabla u^{n+1} - \nabla u^{n+1} -  \mathbb{I}_d - \nabla W \star \nabla \rho^{n+1}\cr
&\quad +  \intr (u^n(y) - u^n(x)) \otimes \nabla_x \psi(x-y) \rho^{n+1}(y)\,dy\cr
&\quad - \nabla u^n \intr \psi(x-y) \rho^{n+1}(y)\,dy,
\end{aligned}
\end{align}
where we used $\nabla V = x$ and $\mathbb{I}_d$ denotes the $n \times n$ identity matrix. Note that
\[
|\nabla u^n \,\nabla u^{n+1}| \leq M\|\nabla u^{n+1}(\cdot,t)\|_{L^\infty}
\]
and
\[
\|\nabla W \star \nabla \rho^{n+1}\|_{L^\infty} \leq \|\nabla W\|_{L^2} \|\nabla \rho^{n+1}\|_{L^2}.
\]
We also estimate the last terms on the right hand side of \eqref{na_u} as
$$\begin{aligned}
\lt|\intr (u^n(y) - u^n(x)) \otimes \nabla_x \psi(x-y) \rho^{n+1}(y)\,dy \rt| 
&\leq \int_{|x-y| \leq R} |u^n(y) - u^n(x)| |\nabla_x \psi(x-y)| \rho^{n+1}(y)\,dy \cr
&\leq \|\nabla u^n\|_{L^\infty} \int_{|x-y| \leq R} |y-x| |\nabla_x \psi(x-y)| \rho^{n+1}(y)\,dy\cr
&\leq \|\nabla u^n\|_{L^\infty} R \|\nabla \psi\|_{L^2}\|\rho^{n+1}\|_{L^2}\cr
&\leq CM\|\psi\|_{L^2}\|\rho^{n+1}\|_{L^2}
\end{aligned}$$
and
\[
\lt|\nabla u^n \intr \psi(x-y) \rho^{n+1}(y)\,dy \rt| \leq  \|\nabla u^n\|_{L^\infty} \|\psi\|_{L^2}\|\rho^{n+1}\|_{L^2} \leq CM\|\psi\|_{L^2}\|\rho^{n+1}\|_{L^2}.
\]
These estimates together with integrating \eqref{na_u} along the characteristic flow $\eta^{n+1}$ implies
$$\begin{aligned}
e^t\|\nabla u^{n+1}(\cdot,t)\|_{L^\infty} &\leq \|\nabla u_0\|_{L^\infty} + CM\int_0^t e^\tau \|\nabla u^{n+1}(\cdot,\tau)\|_{L^\infty}\,d\tau  + C(1+M)\int_0^t e^\tau \|\rho^{n+1}(\cdot,\tau)\|_{H^s}\,d\tau.
\end{aligned}$$
By using Gr\"onwall's lemma, we obtain
$$\begin{aligned}
e^t\|\nabla u^{n+1}(\cdot,t)\|_{L^\infty} &\leq \|\nabla u_0\|_{L^\infty}e^{CMt} + C(1+M)\int_0^t e^\tau \|\rho^{n+1}(\cdot,\tau)\|_{H^s}\,d\tau  \cr
&\quad + CM(1+M) e^{CMt}\int_0^t e^{-CM\xi}\int_0^\xi e^\tau \|\rho^{n+1}(\cdot,\tau)\|_{H^s}\,d\tau d\xi.
\end{aligned}$$
This together with \eqref{est_lrho} asserts
\bq\label{u_inf}
\|\nabla u^{n+1}(\cdot,t)\|_{L^\infty} \leq \|\nabla u_0\|_{L^\infty}e^{(CM - 1)t} + E_1(t),
\eq
where $E_1: [0,T_0] \to [0,\infty)$ is continuous on $[0,T_0]$ satisfying $E_1(t) \to 0$ as $t \to 0^+$. 

For the $L^2$-estimate of $u^{n+1}$ on $B(0,R)$, we multiply \eqref{mom_lin} by $u^{n+1}$ and integrate it over $B(0,R)$ to yield
$$\begin{aligned}
\frac12\frac{d}{dt}\intb |u^{n+1}|^2\,dx
&= \intb u^{n+1} \cdot \lt( - u^n \cdot \nabla u^{n+1} - u^{n+1} - \nabla V - \nabla W \star \rho^{n+1}\rt) dx \cr
&\quad + \intb u^{n+1} \cdot \lt(\intr \psi(x-y) (u^n(y) - u^n(x)) \rho^{n+1}(y)\,dy\rt)dx\cr
&\leq \|\nabla u^{n+1}\|_{L^\infty} \|u^n\|_{L^2(B(0,R))}\|u^{n+1}\|_{L^2(B(0,R))} - \|u^{n+1}\|_{L^2(B(0,R))}^2\cr
&\quad + R\|u^{n+1}\|_{L^1(B(0,R))} + C(\|\rho^{n+1}\|_{L^2} + \|\rho^{n+1}\|_{L^\infty}) \|u^{n+1}\|_{L^1(B(0,R))}\cr
&\quad +\|\nabla u^n\|_{L^\infty} R \|\psi\|_{L^2}\|\rho^{n+1}\|_{L^2} \|u^{n+1}\|_{L^1(B(0,R))}.
\end{aligned}$$
Here we used 
$$\begin{aligned}
\lt|\intr \psi(x-y) (u^n(y) - u^n(x)) \rho^{n+1}(y)\,dy\rt| &\leq \int_{|x-y| \leq R} \psi(x-y) |u^n(y) - u^n(x)| \rho^{n+1}(y)\,dy\cr
&\leq \|\nabla u^n\|_{L^\infty}\int_{|x-y| \leq R} \psi(x-y) |x-y| \rho^{n+1}(y)\,dy\cr
&\leq \|\nabla u^n\|_{L^\infty} R \|\psi\|_{L^2}\|\rho^{n+1}\|_{L^2}.
\end{aligned}$$
Thus we obtain
\[
\frac{d}{dt}\|u^{n+1}\|_{L^2(B(0,R))} \leq CM\|\nabla u^{n+1}\|_{L^\infty} + C(1 + (1+M)\|\rho^{n+1}\|_{H^s} ),
\]
where $C>0$ depends only on $R$ and $\|\psi\|_{L^2}$. Integrating this over $[0,t]$ with $t \leq T_0$ and using the estimates \eqref{est_lrho} and \eqref{u_inf} imply
\bq\label{u_l2}
\|u^{n+1}\|_{L^2(B(0,R))} \leq \|u_0\|_{L^2(B(0,R))} + E_2(t),
\eq
where $E_2: [0,T_0] \to [0,\infty)$ is continuous on $[0,T_0]$ satisfying $E_2(t) \to 0$ as $t \to 0^+$. \newline

{\bf Step C.-} For $2 \leq k \leq s+1$, we find
$$\begin{aligned}
&\frac12\frac{d}{dt}\int_{\R^d} |\nabla^k u^{n+1}|^2\,dx \cr
&\quad = - \int_{\R^d} \nabla^k u^{n+1} \cdot (u^n \cdot \nabla^{k+1} u^{n+1})\,dx - \int_{\R^d} \nabla^k u \cdot ( \nabla^k(u^n \cdot \nabla u^{n+1}) - u^n \cdot \nabla^{k+1} u^{n+1})\,dx\cr
&\qquad - \int_{\R^d} |\nabla^k u^{n+1}|^2\,dx - \int_{\R^d} \nabla^k u^{n+1} \cdot \nabla^k (\nabla W \star \rho^{n+1})\,dx\cr
&\qquad + \intr \nabla^k u^{n+1} \cdot \nabla^k \intr \psi(x-y) (u^n(y) - u^n(x)) \rho^{n+1}(y)\,dydx\cr
&\quad =: \sum_{k=1}^5 J_k,
\end{aligned}$$
where $J_1$ and $J_2$ can be estimated as
\[
J_1 \leq \|\nabla u^n\|_{L^\infty}\|\nabla^k u^{n+1}\|_{L^2}^2 \leq M\|\nabla^k u^{n+1}\|_{L^2}^2
\]
and
$$\begin{aligned}
J_2 &\leq C\lt(\|\nabla^k u^n\|_{L^2}\|\nabla u^{n+1}\|_{L^\infty} + \|\nabla u^n\|_{L^\infty} \|\nabla^k u^{n+1}\|_{L^2} \rt)\|\nabla^k u^{n+1}\|_{L^2} \cr
&\leq CM(\|\nabla u^{n+1}\|_{L^\infty}+ \|\nabla^k u^{n+1}\|_{L^2})\|\nabla^k u^{n+1}\|_{L^2}.
\end{aligned}$$
For the estimate of $J_4$, we use the fact that $W$ is the Coulombian potential to deduce 
$$\begin{aligned}
\lt|J_4\rt| &= \lt| \int_{\R^d} |\nabla^{k} u^{n+1}||\nabla^2 W \star \nabla^{k-1}\rho^{n+1}|\,dx\rt| \leq \|\nabla^k u^{n+1}\|_{L^2}\|\nabla^2 W\|_{L^1} \|\nabla^{k-1} \rho^{n+1}\|_{L^2}.
\end{aligned}$$
We next divide $J_5$ into two terms:
$$\begin{aligned}
J_5 &= \sum_{0 \leq \ell \leq k} \binom{k}{\ell}\intrr \nabla^k u^{n+1}(x) \nabla_x^\ell \psi(x-y) \nabla_x^{k-\ell}(u^n(y) - u^n(x)) \rho^{n+1}(y)\,dydx\cr
&= -\sum_{0 \leq \ell \leq k-1} \binom{k}{\ell}\intrr \nabla^k u^{n+1}(x) \nabla_x^\ell \psi(x-y) \nabla_x^{k-\ell}u^n(x) \rho^{n+1}(y)\,dydx\cr
&\quad + \intrr \nabla^k u^{n+1}(x) \nabla_x^k \psi(x-y) (u^n(y) - u^n(x)) \rho^{n+1}(y)\,dydx\cr
&=: J_5^1 + J_5^2.
\end{aligned}$$
Note that 
$$\begin{aligned}
&\lt|\intrr \nabla^k u^{n+1}(x) \nabla_x^\ell \psi(x-y) \nabla_x^{k-\ell}u^n(x) \rho^{n+1}(y)\,dydx\rt|\cr
&\quad = \lt|\intrr \nabla^k u^{n+1}(x) \nabla_y^\ell \psi(x-y) \nabla_x^{k-\ell}u^n(x) \rho^{n+1}(y)\,dydx\rt|\cr
&\quad = \lt|\intrr \psi(x-y)\nabla^k u^{n+1}(x)  \nabla_x^{k-\ell}u^n(x) \nabla_y^\ell \rho^{n+1}(y)\,dydx\rt|.
\end{aligned}$$
Thus for $\ell = k-1$ we get
$$\begin{aligned}
&\lt|\intrr  \psi(x-y) \nabla^k u^{n+1}(x) \nabla u^n(x)  \nabla_y^{k-1}\rho^{n+1}(y)\,dydx\rt|\cr
&\quad \leq \|\nabla u^n\|_{L^\infty} \intrr \psi(x-y)| \nabla^k u^{n+1}(x)|  |\nabla^{k-1}\rho^{n+1}(y)|\,dydx\cr
&\quad \leq \|\nabla u^n\|_{L^\infty}\|\psi\|_{L^1}\|\nabla^k u^{n+1}\|_{L^2}\|\nabla^{k-1}\rho^{n+1}\|_{L^2}\cr
&\quad \leq CM\|\nabla^k u^{n+1}\|_{L^2}\|\nabla^{k-1}\rho^{n+1}\|_{L^2}
\end{aligned}$$
and for $0 \leq \ell \leq k-2$ we obtain
$$\begin{aligned}
&\lt|\intrr \psi(x-y)\nabla^k u^{n+1}(x)  \nabla_x^{k-\ell}u^n(x) \nabla_y^\ell \rho^{n+1}(y)\,dydx\rt|\cr
&\quad \leq \|\nabla^k u^{n+1}\|_{L^2}\| \nabla^{k-\ell}u^n\|_{L^2}\|\psi\|_{L^2} \|\nabla^\ell \rho^{n+1}\|_{L^2}\cr
&\quad \leq CM\|\nabla^k u^{n+1}\|_{L^2}\|\nabla^\ell \rho^{n+1}\|_{L^2}.
\end{aligned}$$
This asserts
$$\begin{aligned}
J_5^1 &\leq CM\|\nabla^k u^{n+1}\|_{L^2}\sum_{0 \leq \ell \leq k-2} \binom{k}{\ell} \|\nabla^\ell \rho^{n+1}\|_{L^2} + CM\|\nabla^k u^{n+1}\|_{L^2}\|\nabla^{k-1}\rho^{n+1}\|_{L^2}\cr
&\leq CM\|\nabla^k u^{n+1}\|_{L^2} \|\rho^{n+1}\|_{H^{k-1}}.
\end{aligned}$$
Similarly, by integration by parts, we notice that 
$$\begin{aligned}
&\lt|\intrr \nabla^k u^{n+1}(x) \nabla_x^k \psi(x-y) (u^n(y) - u^n(x)) \rho^{n+1}(y)\,dydx\rt|\cr
&\quad = \lt|\intrr \nabla^k u^{n+1}(x) \nabla_y^{k-1} \nabla_x \psi(x-y) (u^n(y) - u^n(x)) \rho^{n+1}(y)\,dydx\rt|\cr
&\quad = \lt|\intrr  \nabla^k u^{n+1}(x) \nabla_x \psi(x-y)   \nabla_y^{k-1}\lt((u^n(y) - u^n(x)) \rho^{n+1}(y)\rt)dydx\rt|\cr
&\quad = \lt|\sum_{0 \leq \ell \leq k-1}\binom{k-1}{\ell} \intrr  \nabla^k u^{n+1}(x) \nabla_x \psi(x-y)   \nabla_y^{k-1-\ell}(u^n(y) - u^n(x)) \nabla_y^\ell\rho^{n+1}(y) \,dydx\rt|.
\end{aligned}$$
On the other hand, we find
$$\begin{aligned}
&\lt|\intrr  \nabla^k u^{n+1}(x) \nabla_x \psi(x-y) (u^n(y) - u^n(x)) \nabla_y^{k-1}\rho^{n+1}(y) \,dydx\rt|\cr
&\quad \leq \|\nabla u^n\|_{L^\infty}\int_{|x-y|\leq R} |\nabla^k u^{n+1}(x)|| \nabla_x \psi(x-y)| |x-y| | \nabla_y^{k-1}\rho^{n+1}(y)| \,dydx\cr
&\quad \leq R \|\nabla u^n\|_{L^\infty} \|\psi\|_{L^1}\|\nabla^k u^{n+1}\|_{L^2} \|\nabla^{k-1} \rho^{n+1}\|_{L^2}\cr
&\quad \leq CM\|\nabla^k u^{n+1}\|_{L^2} \|\nabla^{k-1} \rho^{n+1}\|_{L^2},
\end{aligned}$$
and
$$\begin{aligned}
&\lt|\intrr  \nabla^k u^{n+1}(x) \nabla_x \psi(x-y)   \nabla_y u^n(y) \nabla_y^{k-2}\rho^{n+1}(y) \,dydx\rt|\cr
&\quad \leq \|\nabla u^n\|_{L^\infty} \intrr |\nabla^k u^{n+1}(x)|| \nabla_x \psi(x-y)|   |\nabla_y^{k-2}\rho^{n+1}(y)| \,dydx\cr
&\quad \leq \|\nabla u^n\|_{L^\infty}\|\nabla \psi\|_{L^1} \|\nabla^k u^{n+1}\|_{L^2} \|\nabla^{k-2} \rho^{n+1}\|_{L^2}\cr
&\quad \leq CM\|\nabla^k u^{n+1}\|_{L^2} \|\nabla^{k-2} \rho^{n+1}\|_{L^2}.
\end{aligned}$$
Moreover, for $0 \leq \ell \leq k-3$ we obtain
$$\begin{aligned}
&\lt| \intrr  \nabla^k u^{n+1}(x) \nabla_x \psi(x-y)   \nabla_y^{k-1-\ell}u^n(y) \nabla_y^\ell\rho^{n+1}(y) \,dydx\rt|\cr
&\quad \leq \| \nabla^k u^{n+1}\|_{L^2} \|\nabla \psi\|_{L^2} \|\nabla^{k-1-\ell}u^n\|_{L^2} \|\nabla^\ell\rho^{n+1}\|_{L^2}\cr
&\quad \leq CM\| \nabla^k u^{n+1}\|_{L^2}  \|\nabla^\ell\rho^{n+1}\|_{L^2}.
\end{aligned}$$
Thus we have 
$$\begin{aligned}
J_5^2 &\leq CM\| \nabla^k u^{n+1}\|_{L^2}\sum_{0 \leq \ell \leq k-3}\binom{k-1}{\ell}   \|\nabla^\ell\rho^{n+1}\|_{L^2} + CM\|\nabla^k u^{n+1}\|_{L^2} \|\nabla^{k-2} \rho^{n+1}\|_{H^1}\cr
&\leq CM\| \nabla^k u^{n+1}\|_{L^2}\|\rho^{n+1}\|_{H^{k-1}},
\end{aligned}$$
and subsequently we get
\[
J_5 \leq CM\|\nabla^k u^{n+1}\|_{L^2} \|\rho^{n+1}\|_{H^{k-1}}.
\]
We finally combine all of the above estimate to have
\[
\frac{d}{dt}\|\nabla^2 u^{n+1}\|_{H^{s-1}} + \|\nabla^2 u^{n+1}\|_{H^{s-1}} \leq  CM \|\nabla^2 u^{n+1}\|_{H^{s-1}} + CM\|\nabla u^{n+1}\|_{L^\infty} + CM\|\rho^{n+1}\|_{H^s},
\]
and applying Gr\"onwall's lemma gives
\bq\label{high_u}
\|\nabla^2 u^{n+1}\|_{H^{s-1}} \leq \|\nabla^2 u_0\|_{H^{s-1}} e^{CMt} + E_3(t),
\eq
where we used the estimates in {\bf Steps B \& C} and $E_3: [0,T_0] \to [0,\infty)$ is continuous on $[0,T_0]$ satisfying $E_3(t) \to 0$ as $t \to 0^+$. \newline

{\bf Step D.-} We now combine \eqref{est_lrho}, \eqref{u_inf}, \eqref{u_l2}, and \eqref{high_u} to have
\begin{align}\label{final}
\begin{aligned}
&\|\rho^{n+1}(\cdot,t)\|_{H^s} + \|\nabla u^{n+1}(\cdot,t)\|_{L^\infty} + \|u^{n+1}(\cdot,t)\|_{L^2(B(0,R))} + \|\nabla^2 u^{n+1}\|_{H^{s-1}}\cr
&\quad \leq \|\rho_0\|_{H^s}e^{CMt} + \|\nabla u_0\|_{L^\infty}e^{(CM - 1)t} + \|u_0\|_{L^2(B(0,R))} + \|\nabla^2 u_0\|_{H^{s-1}} e^{CMt} + E(t)
\end{aligned}
\end{align}
for $t \leq T_0$, where $C>0$ is independent of $n$, and $E: [0,T_0] \to [0,\infty)$ is continuous on $[0,T_0]$ satisfying $E(t) \to 0$ as $t \to 0^+$. On the other hand, the right hand side of \eqref{final} converges to $\|\rho_0\|_{H^s} + \|u_0\|_{L^2(B(0,R))} + \|\nabla u_0\|_{L^\infty} +  \|\nabla^2 u_0\|_{H^{s-1}}$ as $t \to 0^+$ and that is strictly less than $N$. This asserts that there exists $T_* \leq T_0$ such that 
\[
\sup_{0 \leq t \leq T_*}\|\rho^{n+1}(\cdot,t)\|_{H^s} + \|\nabla u^{n+1}(\cdot,t)\|_{L^\infty} + \|u^{n+1}(\cdot,t)\|_{L^2(B(0,R))} + \|\nabla^2 u^{n+1}\|_{H^{s-1}} \leq M.
\]
This completes the proof.
\end{proof}
%
%
%

\subsection{Proof of Theorem \ref{thm_local}} We first show the existence of a  solution $(\rho,u) \in \mathcal{Y}_{s,R}(T_*)$. Note that $\rho^{n+1} - \rho^n$ and $u^{n+1} - u^n$ satisfy
\begin{align}\label{eqn_rho}
\begin{aligned}
&\pa_t (\rho^{n+1} - \rho^n) + (u^n - u^{n-1})\cdot \nabla \rho^{n+1} + u^{n-1} \cdot \nabla (\rho^{n+1} - \rho^n) \cr
&\qquad + (\rho^{n+1} - \rho^n) \nabla \cdot u^n + \rho^n \nabla \cdot (u^n - u^{n-1}) = 0
\end{aligned}
\end{align}
and
$$\begin{aligned}
&\pa_t (u^{n+1} - u^n) + (u^n - u^{n-1})\cdot \nabla u^{n+1} + u^{n-1} \cdot \nabla (u^{n+1} - u^n) \cr
&\quad = - (u^{n+1} - u^n) - \nabla W \star (\rho^{n+1} - \rho^n) + \intr \psi(x-y) (u^n(y) - u^{n-1}(y)) \rho^{n+1}(y)\,dy\cr
&\qquad  - (u^n(x) - u^{n-1}(x)) \intr \psi(x-y) \rho^{n+1}(y)\,dy + \intr \psi(x-y)(u^{n-1}(y) - u^{n-1}(x)) (\rho^{n+1} - \rho^n)(y)\,dy,
\end{aligned}$$
respectively. Then multiplying \eqref{eqn_rho} by $\rho^{n+1} - \rho^n$ and integrating it over $\R^d$ gives
\bq\label{est_rhon}
\|(\rho^{n+1} - \rho^n)(\cdot,t)\|_{L^2}^2 \leq C\int_0^t \lt(\|(\rho^{n+1} - \rho^n)(\cdot,\tau)\|_{L^2}^2 + \|(u^n - u^{n-1})(\cdot,\tau)\|_{H^1}^2\rt)d\tau,
\eq
where $C > 0$ is independent of $n$. On the other hand, for $k=0,1$, we find
$$\begin{aligned}
&\frac12\frac{d}{dt}\intr |\nabla^k (u^{n+1} - u^n)|^2\,dx\cr
&\quad = -\intr \nabla^k (u^{n+1} - u^n) \nabla^k \lt((u^n - u^{n-1}) \cdot \nabla u^{n+1}\rt)dx\cr
&\qquad -\intr \nabla^k (u^{n+1} - u^n) \nabla^k \lt(u^{n-1} \cdot \nabla (u^{n+1} - u^n)\rt)dx\cr
&\qquad - \intr |\nabla^k (u^{n+1} - u^n)|^2\,dx -\intr \nabla^k (u^{n+1} - u^n)  \nabla^k (\nabla W \star (\rho^{n+1} - \rho^n)(x)) \,dx\cr
&\qquad +\intr \nabla^k (u^{n+1} - u^n)  \nabla_x^k \lt(\intr \psi(x-y) (u^n(y) - u^{n-1}(y)) \rho^{n+1}(y)\,dy\rt) \,dx\cr
&\qquad -\intr \nabla^k (u^{n+1} - u^n)  \nabla_x^k \lt( (u^n(x) - u^{n-1}(x)) \intr \psi(x-y) \rho^{n+1}(y)\,dy\rt) \,dx\cr
&\qquad +\intr \nabla^k (u^{n+1} - u^n)  \nabla_x^k \lt(\intr \psi(x-y)(u^{n-1}(y) - u^{n-1}(x)) (\rho^{n+1} - \rho^n)(y)\,dy\rt) \,dx  =: \sum_{i=1}^7 K_i,
\end{aligned}$$
where we easily estimate 
\[
\sum_{i=1}^3 K_i \leq C\| u^{n+1} - u^n\|_{H^1}^2+ C \|u^n - u^{n-1}\|_{H^1}^2.
\]
Here $C>0$ is independent of $n$. We next use the following estimates
$$\begin{aligned}
\lt|\intr  (u^{n+1} - u^n)(x) \cdot (\nabla W \star (\rho^{n+1} - \rho^n)(x)) \,dx\rt| 
&\leq C\|u^{n+1} - u^n\|_{L^2}\|\nabla W\|_{L^1}\|\rho^{n+1} - \rho^n\|_{L^2}\cr
&\leq C\|u^{n+1} - u^n\|_{L^2}\|\rho^{n+1} - \rho^n\|_{L^2}
\end{aligned}$$
and
\[
\lt|\intr \nabla (u^{n+1} - u^n)(x): (\nabla^2 W \star (\rho^{n+1} - \rho^n)(x)) \,dx\rt|\leq \|\nabla^2 W\|_{L^1}\|\nabla (u^{n+1} - u^n)\|_{L^2}\|\rho^{n+1} - \rho^n\|_{L^2}
\]
to have
$
K_4 \leq C\| u^{n+1} - u^n\|_{H^1}^2 + C\|\rho^{n+1} - \rho^n\|_{L^2}^2.
$
For the rest, if $k=0$, then
$$\begin{aligned}
K_5 &\leq  \|u^{n+1} - u^n\|_{L^2}\|\psi\|_{L^2}\|u^n - u^{n-1}\|_{L^2}\|\rho^{n+1}\|_{L^2}\leq C\|u^{n+1} - u^n\|_{L^2}^2 + C\|u^n - u^{n-1}\|_{L^2}^2,\cr
K_6 &\leq \|u^{n+1} - u^n\|_{L^2}\|u^n - u^{n-1}\|_{L^2}\|\psi\|_{L^2}\|\rho^{n+1}\|_{L^2} \leq C\|u^{n+1} - u^n\|_{L^2}^2 + C\|u^n - u^{n-1}\|_{L^2}^2,\cr
K_7 &\leq R\|\nabla u^{n-1}\|_{L^\infty} \|\psi\|_{L^1}\|u^{n+1} - u^n\|_{L^2}\|\rho^{n+1} - \rho^n\|_{L^2}\leq C\|u^{n+1} - u^n\|_{L^2}^2 + C\|\rho^{n+1} - \rho^n\|_{L^2}^2.
\end{aligned}$$
On the other hand, if $k=1$, we obtain
$$\begin{aligned}
K_5 &\leq  \|\nabla(u^{n+1} - u^n)\|_{L^2}\|\nabla \psi\|_{L^2}\|u^n - u^{n-1}\|_{L^2}\|\rho^{n+1}\|_{L^2}\leq C\|\nabla(u^{n+1} - u^n)\|_{L^2} + C\|u^n - u^{n-1}\|_{L^2}^2,\cr
K_6 &\leq \|\nabla(u^{n+1} - u^n)\|_{L^2}\lt(\|\nabla(u^n - u^{n-1})\|_{L^2}\|\psi\|_{L^2} + \|u^n - u^{n-1}\|_{L^2}\|\nabla \psi\|_{L^2}\rt) \|\rho^{n+1}\|_{L^2} \cr
&\leq C\|\nabla(u^{n+1} - u^n)\|_{L^2} + C\|u^n - u^{n-1}\|_{H^1}^2,\cr
K_7 &\leq \|\nabla(u^{n+1} - u^n)\|_{L^2} \lt(R\|\nabla u^{n-1}\|_{L^\infty}\|\nabla \psi\|_{L^1} + \|\psi\|_{L^1}\|\nabla u^{n-1}\|_{L^\infty} \rt)\|\rho^{n+1} - \rho^n\|_{L^2}\cr
&\leq C\|\nabla(u^{n+1} - u^n)\|_{L^2} + C\|\rho^{n+1} - \rho^n\|_{L^2}^2.
\end{aligned}$$
We now combine all of the above estimates to have
\[
\frac{d}{dt}\|u^{n+1} - u^n\|_{H^1}^2 \leq C\| u^{n+1} - u^n\|_{H^1}^2+ C \|u^n - u^{n-1}\|_{H^1}^2 + C\|\rho^{n+1} - \rho^n\|_{L^2}^2,
\]
and subsequently this yields
\[
\|(u^{n+1} - u^n)(\cdot,t)\|_{H^1}^2 \leq C\int_0^t \lt(\|(\rho^{n+1} - \rho^n)(\cdot,\tau)\|_{L^2}^2 + \|(u^n - u^{n-1})(\cdot,\tau)\|_{H^1}^2\rt)d\tau,
\]
where $C > 0$ is independent of $n$. This together with \eqref{est_rhon} asserts that $(\rho^n,u^n)$ is a Cauchy sequence in $\mc([0,T];L^2(\R^d)) \times \mc([0,T];H^1(\R^d))$. Interpolating this strong convergences with the above uniform-in-$n$ bound estimates gives
\[
\rho^n \to \rho \quad \mbox{in }\mc([0,T_*]; H^{s-1}(\R^d)), \quad u^n \to u \quad \mbox{in }\mc([0,T_*]; H^1(B(0,R))) \quad \mbox{as } n\to\infty,
\]
\[
\nabla u^n \to \nabla u \quad \mbox{in } \mc(\R^d \times [0,T_*]), \quad \mbox{and} \quad \nabla^2 u^n \to \nabla^2 u \quad \mbox{in } \mc([0,T_*];H^{s-2}(\R^d)) \quad \mbox{as } n\to\infty,
\]
due to $s > d/2+1$. We then use a standard functional analytic arguments, see for instances \cite[Section 2.1]{CK16}, to have that the limiting functions $\rho$ and $u$ satisfy the regularity in Theorem \ref{thm_local}. We easily show that the limiting functions $\rho$ and $u$ are solutions to \eqref{main_fluid2} in the sense of Definition \ref{def_strong2}.

We finally provide the uniqueness of strong solutions. Let $(\rho,u)$ and $(\tilde\rho, \tilde u)$ be the strong solutions obtained above with the same initial data $(\rho_0, u_0)$. Set $\Delta(t)$ a difference between two strong solutions:
\[
\Delta(t) := \|\rho(\cdot,t) - \tilde\rho(\cdot,t)\|_{L^2} + \|u(\cdot,t) - \tilde u(\cdot,t)\|_{H^1}.
\]
Then by using almost the same argument as above, we have
\[
\Delta(t) \leq C\int_0^t \Delta(s)\,ds \quad \mbox{with} \quad \Delta(0) = 0.
\]
This concludes that $\Delta(t) \equiv 0$ on $[0,T_*]$ and completes the proof.

%
%
%
%

\section*{Acknowledgments}
JAC was partially supported by EPSRC grant number EP/P031587/1 and the Advanced Grant Nonlocal-CPD (Nonlocal PDEs for Complex Particle Dynamics: Phase Transitions, Patterns and Synchronization) of the European Research Council Executive Agency (ERC) under the European Union's Horizon 2020 research and innovation programme (grant agreement No. 883363). YPC was supported by NRF grant (No. 2017R1C1B2012918), POSCO Science Fellowship of POSCO TJ Park Foundation, and Yonsei University Research Fund of 2019-22-021. 


\appendix

\section{Well-posedness of the particle system}\label{app_par}

In this appendix, we study the global existence and uniqueness of classical solutions to the particle system \eqref{main_par}--\eqref{ini_main_par}. 

Let us first consider the case with singular interaction potentials with $d\geq 2$. In this case, we can use the repulsive effect from the interaction forces, and this also enables us to have the uniqueness of solutions. 
\begin{theorem}\label{thm_par_ext} Let $d \geq 2$. Suppose that $\wt W$ is of the form \eqref{w_a} or \eqref{w_a2} and the confinement potential $V$ satisfies either $V \to +\infty$ as $|x| \to \infty$ or $\nabla_x V$ has linear growth as $|x| \to \infty$. If the initial data $x_0$ satisfy
\[
\min_{1 \leq i\neq j \leq N}|x_{i0} - x_{j0}| > 0.
\]
Then there exists a unique global smooth solution to the system \eqref{main_par}--\eqref{ini_main_par} with $\wt W$ instead of $W$ satisfying
\[
C \geq \max_{1 \leq i\neq j \leq N}|x_i(t) - x_j(t)| \geq \min_{1 \leq i\neq j \leq N}|x_i(t) - x_j(t)| > 0
\]
for $t \geq 0$, where $C>0$ is independent of $t$.
\end{theorem}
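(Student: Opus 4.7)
The plan is to run a standard continuation argument for the ODE system \eqref{main_par}--\eqref{ini_main_par}. First I would set up local existence: since the right-hand side of \eqref{main_par} is $C^\infty$ on the collision-free open set
\[
\Omega_N := \{(x,v) \in (\R^d)^N \times (\R^d)^N : x_i \neq x_j \text{ for all } i \neq j\},
\]
the Picard--Lindel\"of theorem produces a unique maximal smooth solution on some interval $[0,T_{\max})$, and the classical continuation criterion says that the solution extends past $T_{\max}$ unless $\delta(t) := \min_{i\neq j}|x_i(t) - x_j(t)| \to 0$ or $\max_i |v_i(t)| \to \infty$ as $t \to T_{\max}^-$. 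The remainder of the argument has to exclude both scenarios.

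The next step would be to derive the total energy identity. Setting
\[
E(t) := \frac{\e_N}{2N}\sum_{i=1}^N |v_i(t)|^2 + \frac{1}{N}\sum_{i=1}^N V(x_i(t)) + \frac{1}{2N^2}\sum_{i \neq j} \wt W(x_i(t) - x_j(t)),
\]
and using $\wt W(-x)=\wt W(x)$ together with the symmetry of $\psi$, a direct differentiation along trajectories of \eqref{main_par} should give
\[
\frac{dE}{dt} = -\frac{\gamma}{N}\sum_{i=1}^N |v_i|^2 - \frac{1}{2N^2}\sum_{i,j=1}^N \psi(x_i - x_j)|v_i - v_j|^2 \leq 0,
\]
so $E(t) \leq E(0)$ on $[0,T_{\max})$. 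Since $V \geq 0$ and (for $\alpha>0$ in \eqref{w_a}) $\wt W \geq 0$, this already furnishes a uniform bound on $\sum_i |v_i(t)|^2$.

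Collisions are then ruled out by the repulsive singularity. In the Riesz case \eqref{w_a}, $\wt W \geq 0$ and the energy bound gives
\[
\frac{1}{2N^2}\wt W(\delta(t)) \leq \frac{1}{2N^2}\sum_{i\neq j} \wt W(x_i(t) - x_j(t)) \leq E(0),
\]
and since $\wt W(r) \to +\infty$ as $r \to 0^+$, this forces $\delta(t) \geq c_* > 0$ uniformly in $t$. To upper-bound positions I would split on the hypothesis on $V$: if $V \to \infty$ at infinity, then $V(x_i(t)) \leq NE(0)$ combined with coercivity gives $|x_i(t)| \leq R_*$ for all $t$; if instead $|\nabla V(x)| \leq C(1+|x|)$, the momentum equation in \eqref{main_par} has at most linear growth in $(x_i,v_i)$ once $|\nabla \wt W(x_i - x_j)|$ has been controlled by $\delta(t) \geq c_*$ and $|\psi|$ is bounded, so a Gr\"onwall estimate rules out finite-time blow-up, and the linear damping $-\gamma v_i$ together with the energy bound on $|v_i|$ promotes this to a uniform-in-$t$ bound.

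The hard part will be the logarithmic case \eqref{w_a2}, where $\wt W$ is not globally bounded below, so the energy inequality alone does not immediately dominate the singular pair and a mild circular dependence appears between the position bound and the collision exclusion. I would resolve this by first using the ODE structure to propagate a local-in-time position bound (via the linear-growth estimate or coercivity of $V$), then using $|x_i - x_j| \leq 2R$ to obtain $\wt W(x_i - x_j) \geq -\log(2R)$, plugging this into the energy identity to extract a lower bound on $\delta(t)$, and finally bootstrapping to a uniform-in-$t$ two-sided control of $|x_i - x_j|$. Once $\delta(t) \geq c_*$ and $|x_i(t)|, |v_i(t)|$ stay bounded, $T_{\max}=\infty$ and the whole trajectory remains inside $\Omega_N$ where the vector field is Lipschitz, which yields uniqueness and the claimed two-sided bound on $|x_i(t) - x_j(t)|$.
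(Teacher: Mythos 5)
Your proposal follows essentially the same route as the paper's proof: a continuation argument combined with the dissipation identity for the discrete free energy $\tfrac12\sum_{i}|v_i|^2+\sum_{i}V(x_i)+\tfrac{1}{2N}\sum_{i\neq j}\wt W(x_i-x_j)$, whose uniform-in-time bound excludes collisions through the repulsive singularity and controls positions through the confinement. You are in fact somewhat more careful than the paper in the logarithmic case \eqref{w_a2}, where $\wt W$ is not bounded below and the paper's one-line bound implicitly relies on the a priori upper bound on pairwise distances that your bootstrap supplies; the only soft spot is your claim that the damping plus the energy bound yields a \emph{uniform-in-time} position bound when only $\nabla_x V$ has linear growth, whereas the paper (and what the energy method actually gives) only asserts no escape to infinity in finite time in that case, the uniform upper bound on $|x_i-x_j|$ being tied to the coercive confinement $V\to+\infty$.
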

\begin{proof} For the proof, we first introduce the maximal life-span $T_0 = T(x_0)$ of the initial data data $x_0$ as
\[
T_0 := \sup\lt\{ s > 0 : \mbox{solution $(x(t), v(t))$ for the system \eqref{main_par} exists up to the time $s$} \rt\}.
\]
Then by the assumption and continuity of solutions, we get $T_0>0$. We now claim that $T_0 = \infty$ and for this it suffices to show that there is no collision between particles for all $t \geq 0$ and that particles cannot escape to infinity in finite time.

A straightforward computation yields
$$\begin{aligned}
\frac12\frac{d}{dt}\sum_{i=1}^N |v_i|^2& = -\gamma \sum_{i=1}^N |v_i|^2 - \sum_{i=1}^N v_i \cdot \nabla_x V(x_i)\cr
&\quad  - \frac1N\sum_{i\neq j}^N v_i \cdot \nabla_x \wt W(x_i - x_j) + \frac1N\sum_{i,j=1}^N \psi(x_i - x_j)(v_j - v_i) \cdot v_i
\end{aligned}$$
for $t \in [0,T_0)$. Note that
\[
\frac{d}{dt} \sum_{i=1}^N V(x_i) = \sum_{i=1}^N v_i \cdot \nabla_x V(x_i)
\]
and
\[
\frac1{2N}\frac{d}{dt}\sum_{i\neq j}^N \wt W(x_i - x_j) = \frac1{2N}\sum_{i\neq j}^N \nabla_x \wt W(x_i - x_j) \cdot (v_i - v_j) = \frac1N\sum_{i\neq j}^N \nabla_x \wt W(x_i - x_j) \cdot v_i,
\]
where we used $\nabla \wt W(-x) = -\nabla \wt W(x)$. Similarly, we also find
\[
\frac1N\sum_{i,j=1}^N \psi(x_i - x_j)(v_j - v_i) \cdot v_i = -\frac1{2N}\sum_{i,j=1}^N \psi(x_i - x_j)|v_j - v_i|^2.
\]
Combining all of the above estimates, we obtain
\[
\frac{d}{dt}\mathcal{F}^N(x,v) + \gamma \sum_{i=1}^N |v_i|^2 +\frac1{2N}\sum_{i,j=1}^N \psi(x_i - x_j)|v_j - v_i|^2 =0
\]
for $t \in [0,T_0)$, where $\mathcal{F}^N(x,v)$ denotes the discrete free energy given by
\[
\mathcal{F}^N(x,v) := \frac12\sum_{i=1}^N |v_i|^2 +  \sum_{i=1}^N V(x_i) + \frac1{2N}\sum_{i\neq j}^N \wt W(x_i - x_j).
\]
If $d=2$, then we have either 
\[
\frac1{2N}\sum_{i\neq j}^N \frac{1}{|x_i - x_j|^\alpha} \leq \mathcal{F}^N(x_0,v_0) \quad \mbox{or} \quad -\frac{1}{2N} \sum_{i \neq j} \log|x_i(t) - x_j(t)| \leq \mathcal{F}^N(x_0,v_0),
\]
where $\alpha \in (0,2)$. On the other hand, if $d \geq 3$, we obtain 
\[
\frac1{2N} \sum_{i\neq j} \frac{1}{|x_i(t) - x_j(t)|^\alpha} \leq \mathcal{F}^N(x_0,v_0)
\]
for all $t \in [0,T_0)$, where $\alpha \in (d-2,d)$. Since the right hand side of the above inequality is uniformly bounded in $t$, we conclude $T_0 = \infty$ for the case $d \geq 2$. An upper bound estimate of the distance between particles is a simple consequence of the uniform-in-time bound estimate of the free energy $\mathcal{F}^N$ due to the confinement potential whenever is present. If $V=0$, one can obtain that particles cannot escape to infinity in finite time as soon as $\nabla_x V$ has linear growth as $|x|\to\infty$. 
\end{proof}

\begin{remark} If the interaction and confinement potentials $W$ and $V$ are regular enough, i.e., $\nabla_x W \in \W^{1,\infty}(\R^d)$ and $\nabla_x V \in \W^{1,\infty}(\R^d)$, we have global-in-time existence and uniqueness of solutions by the standard Cauchy-Lipschitz theory. Moreover, an uniform-in-time bound of the distance between particles can also obtained due to the confinement potential if $V \to +\infty$ as $|x| \to \infty$.
\end{remark}

Let us finally comment on the one dimensional case. If $d=1$ and the interaction potential $\wt W$ is given by \eqref{w_a2}, then we apply Theorem \ref{thm_par_ext} to get the global unique classical solution and uniform-in-time bound estimate. If $W$ is given by the Coulomb potential, i.e., 
\bq\label{sgn}
W'(x) = \frac12 sgn(x), \quad \mbox{where} \quad sgn(x) := \left\{ \begin{array}{ll}
\displaystyle \frac{x}{|x|} & \textrm{if $x \neq 0$}\\[4mm]
0 & \textrm{if $x=0$}
  \end{array} \right..
\eq
Thus the interaction force $- W'$ is discontinuous, but bounded. In this sense, it is not so singular compared to the other cases. Since the velocity alignment force is regular, we can use a similar argument as in \cite[Proposition 1.2]{H14}, see also \cite{CCHS19,F88}, to have the following proposition.
\begin{proposition}\label{prop_1d} Let $d=1$. For any initial configuration $\mz^N(0)$, there exists at least one global-in-time solution to the system of \eqref{main_par} with \eqref{sgn} in the sense that $(x_i(t),v_i(t))$ satisfies the integral system:
$$\begin{aligned}
x_i(t) &= x_i(0) + \int_0^t v_i(s)\,ds, \quad i=1,\dots, N, \quad t >0,\cr
v_i(t) &= v_i(0) - \gamma \int_0^t v_i(s)\,ds - \int_0^t V'(x_i(s))\,ds - \frac1N \sum_{j\neq i}\int_0^t W'(x_i(s) - x_j(s))\,ds\cr
&\quad + \frac1N \sum_{j=1}^N \int_0^t \psi(x_i(s) - x_j(s))(v_j(s) - v_i(s))\,ds.
\end{aligned}$$
\end{proposition}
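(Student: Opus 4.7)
The plan is to construct the solution as a limit of smooth solutions to a regularized system, following the Filippov-style approach of \cite{H14,CCHS19,F88}. First, I would replace $W'$ by a smooth odd approximation $W'_\e(x) = \tfrac12 \eta_\e(x)$ with $\|\eta_\e\|_{L^\infty} \le 1$ and $\eta_\e(x) \to sgn(x)$ for every $x\ne 0$. The regularized system has a locally Lipschitz right-hand side, so the Cauchy-Lipschitz theorem produces unique global-in-time $C^1$ solutions $\{(x_i^\e(t), v_i^\e(t))\}_{i=1}^N$ with the prescribed initial data. Global existence follows because $|W_\e'|\le 1/2$, $\psi$ is bounded, and the damping term $-\gamma v_i^\e$ prevents finite-time blow-up under mild growth of $V'$.

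For a fixed horizon $T>0$, boundedness of the interaction and alignment forces together with the damping yields an $\e$-independent bound
\[
\max_{1 \le i \le N}\sup_{t\in[0,T]}\bigl(|x_i^\e(t)| + |v_i^\e(t)|\bigr) \le C_T.
\]
Uniform bounds on $v_i^\e$ give Lipschitz continuity of $x_i^\e$ on $[0,T]$, while uniform boundedness of the right-hand side of the equation for $v_i^\e$ gives equicontinuity of $v_i^\e$. Ascoli-Arzel\`a then extracts a subsequence (not relabeled) along which $(x_i^\e, v_i^\e) \to (x_i, v_i)$ uniformly on $[0,T]$ for each $i$. The bounded convergence theorem passes the integral formulation to the limit for every term except
\[
\int_0^t W_\e'(x_i^\e(s) - x_j^\e(s))\, ds,
\]
for which continuity of $W'$ on $\R\setminus\{0\}$ combined with uniform convergence produces pointwise convergence of the integrand outside the coincidence set $S_{ij} := \{s\in[0,T] : x_i(s) = x_j(s)\}$.

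The main obstacle is the singular term on $S_{ij}$. Here I would exploit antisymmetry: using $W'_\e(-x)=-W'_\e(x)$, the relative dynamics of $x_i^\e - x_j^\e$ is governed by a repulsive bounded force that pushes the two particles apart on either side of the origin. Consequently, once a collision occurs, either the two particles separate immediately or must stay together with equal velocities. A Filippov-type differential inclusion argument, carried out exactly as in \cite[Proposition 1.2]{H14}, shows that the convention $W'(0):=0$ is consistent with this sticky behavior and that it suffices to pass to the limit
\[
\int_0^t W'_\e(x_i^\e(s) - x_j^\e(s))\, ds \longrightarrow \int_0^t W'(x_i(s) - x_j(s))\, ds.
\]
This identifies $(x_i,v_i)$ as a solution of the integral system in the sense of the statement. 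Since the force is discontinuous at coincidence times, uniqueness is not expected and is not claimed in the proposition; only existence of at least one solution is produced by this compactness scheme.
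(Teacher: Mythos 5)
Your proposal is correct and follows essentially the same route as the paper: the paper gives no detailed proof, merely observing that the force is bounded with only a $sgn$-type discontinuity and that the alignment term is regular, and then invoking the Filippov-type argument of \cite[Proposition 1.2]{H14} (see also \cite{CCHS19,F88}), which is exactly the regularization--compactness--differential-inclusion scheme you outline. One minor caveat: with the paper's convention \eqref{sgn}, $W'(x)=\tfrac12 sgn(x)$, the pair force is attractive rather than repulsive (the sign is inconsistent with the earlier $\mathcal{N}(x)=-|x|/2$), but your treatment of the coincidence set really rests on boundedness and antisymmetry of $W'_\e$ together with $v_i=v_j$ a.e.\ there, so the argument is unaffected.
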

Even though Proposition \ref{prop_1d} does not provide the uniqueness of solutions, it is not necessary for the analysis of mean-field limit or mean-field/small inertia limit from the particle system \eqref{main_par} to the pressureless Euler system \eqref{main_fluid} or the aggregation equation \eqref{eq_agg}.

%
%
%
%

\end{document}